\newtheorem{theorem}{Theorem}
\newtheorem{lemma}[theorem]{Lemma}
\newtheorem{proposition}[theorem]{Proposition}
\theoremstyle{definition}
\newtheorem{defn}{Definition}
\theoremstyle{remark}
\newtheorem{remark}{Remark}
\numberwithin{equation}{section}
\def\beq{\begin{equation}}
\def\eeq{\end{equation}}
\def\beqn{\begin{equation*}}
\def\eeqn{\end{equation*}}
\def\bh{\textbf{h}}
\def\bX{\mathbf{X}}
\def\bbf{\textbf{f}}
\def\bh{\mathbf{h}}
\def\bs{\mathbf{s}}
\def\b1{{\boldsymbol{1}}}
\def\cH{\mathcal{H}}
\def\cJ{\mathcal{J}}
\def\cO{\mathcal{O}}
\def\cT{\mathcal{T}}
\def\cW{\mathcal{W}}
\def\IE{{\mathbb E}}
\def\IR{{\mathbb R}}
\def\IZ{{\mathbb Z}}
\def\fF{\mathfrak{F}}
\def\fG{\mathfrak{G}}
\def\pQ{\partial Q}
\def\bX{\mathbf{X}}
\def\bh{\mathbf{h}}
\def\bs{\mathbf{s}}
\def\b1{{\boldsymbol{1}}}
\def\cH{\mathcal{H}}
\def\cJ{\mathcal{J}}
\def\cO{\mathcal{O}}
\def\cT{\mathcal{T}}
\def\cW{\mathcal{W}}
\def\IE{{\mathbb E}}
\def\IR{{\mathbb R}}
\def\IZ{{\mathbb Z}}
\def\fF{\mathfrak{F}}
\def\fG{\mathfrak{G}}
\def\eps{\epsilon}
\def\pQ{\partial Q}
\def\fF{\mathfrak{F}}
\begin{document}

\title[Almost Sure Invariance Principle]{Non-stationary Almost Sure Invariance Principle 
for Hyperbolic Systems with Singularities}

\date{}

\author{Jianyu Chen}
\address{Department of Mathematics \& Statistics\\ 
University of Massachusetts Amherst}
\email{jchen@math.umass.edu.}
\author{Yun Yang}
\address{Mathematics Department\\
The Graduate Center, City University of New York}
\email{yyang@gc.cuny.edu.}
\author{Hong-Kun Zhang}
\address{Department of Mathematics \& Statistics\\ 
University of Massachusetts Amherst}
\email{hongkun@math.umass.edu.}
\thanks{H.-K.~Zhang is partially supported by the NSF Career Award (DMS-1151762).}

\subjclass[2010]{37D50, 37A25, 60F17.}
\keywords{ASIP, Non-stationarity, Hyperbolicity, Singularities.}

\dedicatory{Dedicated to the memory of Nikolai Chernov.}

\maketitle

\begin{abstract}
We investigate a wide class of two-dimensional
hyperbolic systems with singularities,  
and prove the almost sure invariance principle (ASIP) 
for the random process generated by sequences of dynamically H\"older observables.
The observables could be unbounded, and the process may be non-stationary 
and need not have linearly growing variances.
Our results apply to Anosov diffeomorphisms,  
Sinai dispersing billiards and their perturbations. 
The random processes  
under consideration are related to 
the fluctuation of Lyapunov exponents, 
the shrinking target problem, etc.
\end{abstract}

\tableofcontents
\printindex
\section{Introduction}

Since the pioneering work by Sinai \cite{Sin70} on dispersing billiards, 
the dynamical structures and stochastic properties have been extensively studied
for chaotic billiards \cite{GO74, BS80, BSC90, BSC91, C99, C01, Ma04, SzV04, 
CZ05a, CZ05b, C06, BG06, CM07, C08, CZ08, BCD11, SYZ13},
and also for abstract hyperbolic systems with or without singularities 
\cite{Sin72, P77, KS86, P92, Sataev92, Y98, Y99,  Sarig02, DL08, CD09, CZ09,   DZ11, DZ13, DZ14}.
Among all the physical measures, the SRB measures - named after Sinai \cite{Sin72}, 
Ruelle \cite{Ru78} and Bowen \cite{Bow70, Bow75} -
are shown to display several levels of ergodic properties, including 
the decay rate of correlations,
the large deviation principles and the central limit theorem, etc.

In this paper, we shall focus on the almost sure invariance principle (ASIP)
for a wide class of uniformly hyperbolic systems with singularities, 
which preserve a mixing SRB measure.
The ASIP ensures that partial sum of a random process can be approximated by
a Brownian motion with almost surely negligible error.
More precisely, we say that 
a zero-mean random process $\bX=\{X_n\}_{n\ge 0}$ with finite second moments 
satisfies an ASIP\footnote{
We may need to extend the partial sum process $\{\sum_{k=0}^{n-1} X_k\}_{n\ge 0}$
on a richer probability space without changing its distribution.
In the rest of this paper, we always assume this technical operation when we mention ASIP.
} 
for an error exponent $\lambda\in [0, \frac12)$,
if there exists a Wiener process $W(\cdot)$ such that 
\beq\label{ASIP0}
\left|\sum_{k=0}^{n-1} X_k - W(\sigma_n^2) \right|=\cO(\sigma_n^{2\lambda}), \ \ \text{a.s.},
\eeq
where $\sigma_n^2=\IE\left( \sum_{k=0}^{n-1} X_k\right)^2$ 
is the variance of the $n$-th partial sum. 
In particular, if $\sigma_n^2$ grows linearly in $n$ such that 
$\sigma_n^2=n\sigma^2+\cO(1)$ for some $\sigma\in [0,\infty)$,
it follows from \eqref{ASIP0} that 
\beqn
\left|\sum_{k=0}^{n-1} X_k - \sigma W(n) \right|=\cO (n^{\lambda}), \ \ \text{a.s.}.
\eeqn
The ASIP implies many other limit laws from statistics, 
such as the almost sure central limit theorem, 
the law of the iterated logarithm 
and the weak invariance principle (see \cite{PhSt75} for the details). 

There has been a great deal of work on the ASIP in the probability theroy, see for instance
\cite{PhSt75, BP79, E86, ShaoLu87, Shao93, Cuny11, Wu13, CunyMe15}.
In the context of the stationary process generated by bounded H\"older observables
over smooth dynamical systems with singularities, 
the ASIP was first shown by Chernov \cite{C06} for Sinai dispersing billiards. 
Later, a scalar and a vector-valued ASIP were later proved by Melbourne and Nicol \cite{MN05, MN09} for the Young towers.
By a purely spectral method, Gou\"{e}zel \cite{G10} extended the ASIP for stationary processes for a wide class of systems without assuming Young tower structure.
Gou\"ezel \cite{G10} also provided a pure probabilistic condition, 
which was used by Stenlund \cite{Sten14} to show the ASIP for Sinai billiards with random scatterers.

The ultimate goal of our work is to establish ASIP 
for \emph{non-stationary} process generated by \emph{unbounded} observables, over a wide class of two-dimensional hyperbolic systems under the standard assumptions (\textbf{H1})-(\textbf{H5}) in Section~\ref{Sec:Assumptions}.  
Such class includes Anosov diffeomorphisms, 
Sinai dispersing billiards and their perturbations (See Section~\ref{sec: app} for more details).
Compared to existing results of ASIP for bounded stationary processes, 
our result is relatively new due to the following two features:
\begin{itemize}
\item[(1)] Low regularity: the question on how large classes of observables satisfy 
the central limit theorem or other limit laws had been raised 
by several researchers, see, e.g., a survey by Denker \cite{Den89}. 
Sometimes those classes are much larger than those of bounded H\"older continuous or bounded variation functions.
In this paper, we only assume dynamically H\"{o}lder continuity for observables, which could even be unbounded. 
A direct application is the fluctuation of Lyapunov exponents,
for which the log unstable Jacobian blows up near singularity in billiard systems.

\item[(2)] Non-stationarity: time-dependent processes 
arise from the dynamical Borel-Cantelli Lemma and the shrinking target problem (see e.g. \cite{HV95, CK01, Fa06}).
Recently, Haydn,  Nicol, T\"{o}r\"{o}k and  Vaient \cite{HNTV17} obtained ASIP for the shrinking target problem on a class of expanding maps. In analogy, under some mild conditions, 
we are able to apply our ASIP result  
to the shrinking target problem for two-dimensional hyperbolic systems with singularity.
\end{itemize}

The method we address here is rather transparent and efficient.
We first construct a natural family of $\sigma$-algebras that are characterized by the singularities,
and explore its exponentially $\alpha$-mixing property.
Extending the approach by Chernov \cite{C06} and 
applying a martingale version of ASIP by Shao \cite{Shao93}, 
we are then able to prove the ASIP for the random process
generated by a sequence of integrable dynamically H\"older observables.
A crucial assumption is that the process 
satisfies the Marcinkiewicz-Zygmund type inequalities given by \eqref{M-Z}.
We emphasize that those observables could be unbounded, 
and the growth of partial sum variances need not be linear.
Furthermore, 
the error exponent $\lambda$ in ASIP of the form \eqref{ASIP0} only depends on 
the constant $\kappa_2$ in \eqref{M-Z}.

This paper is organized as follows. In Section \ref{Sec: Assumptions and Results},
we introduce the standard assumptions for the uniformly hyperbolic systems 
with singularities, and state our main results on the ASIP and other limit laws.
We recall several useful theorems in probability theory in Section~\ref{sec: prelim}, 
and prove our main theorem on the  ASIP in Section \ref{sec:proof}.
In Section \ref{sec: app}, we summarize the validity of the ASIP for a wide class of 
uniformly hyperbolic billiards, and discuss two practical process related to the fluctuation of 
ergodic average and the shrinking target problem.

\section{Assumptions and Main Results}\label{Sec: Assumptions and Results}

\subsection{Assumptions}\label{Sec:Assumptions}

Let $T: M\to M$ be a piecewise $C^2$ diffeomorphism of 
a two-dimensional compact Riemannian manifold $M$ with singularities $S_1$,
that is, for each connected component $\Omega\subset M\backslash S_1$,
the map $T: \Omega \to T(\Omega)\subset M$ is a $C^2$ diffeormophism which
can be continuously extended to the closure of $\Omega$.
We denote by $S_{-1}:= T S_{1}$ the singularity of the inverse map $T^{-1}$.

Let $d(\cdot, \cdot)$ denote the distance in $M$ induced by the Riemannian metric.
For any smooth curve $W\subset M$, we denote by $|W|$ its length,
and by $m_W$ the Lebesgue measure on $W$ induced by the Riemannian metric restricted to $W$.

We now make several specific assumptions on the system $T: M\to M$. 
These assumptions are quite standard and have been made in many references  \cite{C99,CD09,CM,CZ09}.
\\

\noindent(\textbf{H1}) \textbf{Uniform hyperbolicity of $T$}. There exist two families of cones
$C^u_x$ (unstable) and $C^s_x$ (stable) in the tangent spaces
${\mathcal T}_x M$, for all $x\in M$, and there exists a constant $\Lambda>1$, with the following properties:
\begin{itemize}
\setlength\itemsep{0em}
\item[(1)] $D_x T (C^u_x)\subset C^u_{ T x}$ and $D_x T
    (C^s_x)\supset C^s_{ T x}$, wherever $D_x T $ exists.
\item[(2)] $\|D_x T(v)\|\geq \Lambda \|v\|$ for any
    $v\in C_x^u$, and $\|D_xT^{-1}(v)\|\geq
    \Lambda \|v\|$ for any $v\in C_x^s$. 
\item[(3)] These families of cones are  continuous on $M$
 and the angle between $C^u_x$ and $C^s_x$ is uniformly
 bounded away from zero.
 \end{itemize}

We say that a smooth curve $W\subset M$ is an \emph{unstable curve} for $T$ 
if at every point $x \in W$ the tangent line
$\cT_x W$ belongs in the unstable cone $C^u_x$.
Furthermore, a curve $W\subset  M$ is an \emph{unstable manifold} for $T$ if 
$T^{-n}(W)$ is an unstable curve for all $n \geq 0$.
We can define stable curves and stable manifolds in a similar fashion.\\

\noindent (\textbf{H2}) \textbf{Singularities.}  
The singularity set $S_{1}$ consists of a finite or countable union of smooth compact curves in $M$,
including the boundary $\partial M$.
We assume the following:
\begin{itemize}
\setlength\itemsep{0em}
\item[(1)] $\partial M$ is transversal to both stable and unstable cones.
\item[(2)] Every other smooth singularity curve in $S_1\setminus \partial M$ is a stable curve, and
every curve in $ S_1$ terminates either inside another curve of $ S_1$ or on $\partial M$. 
\item[(3)] There exist $C>0$ and $\beta_0\in (0,1)$ such that for any $x\in M\backslash S_1$,
\beqn
\|D_xT\|\le C d(x, S_1)^{-\beta_0}.
\eeqn
\end{itemize}
Similar assumptions are made for $S_{-1}$. We set
$
S_{\pm n}=\bigcup_{k=0}^{n-1}  T^{\mp k} S_{\pm 1}
$
for any $n\ge 1$, and it is clear that 
$S_{\pm n}$ be the singularity set of $T^{\pm n}$.
Furthermore, we denote 
$
S_{\pm\infty}=\bigcup_{n=0}^{\infty} S_{\pm n}. 
$

An unstable curve $W\subset M$ is said to be homogeneous  
if for any $n\ge 0$, 
$T^{-n} W$ is contained in a connected component of $M\setminus S_1$.
In other words, $W\cap S_{-\infty}=\emptyset$. 
Similarly, we can define homogeneous stable curves.

\begin{defn}\label{separation time}
For every $x, y\in M$, define $\bs_+(x,y)$, the 
forward separation time for $x$ and $y$, to be the smallest integer
$n\geq 0$ such that $x$ and $y$ belong to distinct elements of
$M\setminus S_n$.
Similarly we define the backward separation time $\bs_-(x,y)$.
\end{defn}

\noindent(\textbf{H3}) \textbf{Regularity of smooth unstable curves}. We assume that there is a $T$-invariant 
family $\cW^u_T$ of unstable curves such that 
\begin{enumerate}
\setlength\itemsep{0em}
\item[(1)]  \textbf{Bounded curvature} The  curvature of any $W\in \cW^{u}_T$ is uniformly bounded 
from above by a positive constant $B$.
 \item[(2)] \textbf{Distortion bounds.} There exist $\gamma_0\in (0,1)$ and  $C_T>0$ such
    that for any $W\in \cW^{u}_T$ and any $x, y \in W$, 
 \beqn\label{distor10}
      \left|\ln\cJ_W (x)-\ln \cJ_W (y)\right|   \leq C_{T}\, d (x, y)^{\gamma_0}, 
 \eeqn
where  $\cJ_W (x)=|D_x T|_{\cT_x W}|$ is  the Jacobian
of $T$ at $x$ along the unstable curve $W$.
\item[(3)] { \textbf{Absolute continuity.}}
 Let $W_1,W_2\in \cW^{u}_T$ be two unstable curves close to each other. Denote
 \begin{equation*}
 W_i'=\{x\in W_i\colon
W^s(x)\cap W_{3-i}\neq\emptyset\}, \hspace{.5cm} i=1,2.
 \end{equation*} The map
$\bh\colon W_1'\rightarrow W_2'$ defined by sliding along stable
manifolds
 is called the \textit{stable holonomy} map. 
 We assume that $\bh_*m_{W_1'}$ is absolutely continuous with respect to $m_{W_2'}$. 
 Furthermore, there exist $C_T>0$ and $\vartheta_0\in (0,1)$ such that  the Jacobian of $\bh$ satisfies
 \beqn\label{Jh}
 |\ln\cJ\bh(y)-\ln \cJ\bh(x)| \leq C_T \vartheta_0^{\bs_+(x,y)},
\ \ \ \text{for any}\ \ x, y\in W_1'.
\eeqn
\end{enumerate}

\noindent(\textbf{H4}) {\textbf{ SRB measure.}} 
The map $T$ preserves an SRB probability measure $\mu$, that is, the conditional measure of $\mu$ 
on each unstable manifold $W^u$ is absolutely continuous with respect to $m_{W^u}$.
We further assume that $\mu$ is strongly mixing. \\

\noindent(\textbf{H5}) {\textbf{ One-step expansion.}}
Given an unstable curve $W\subset M$, we denote $V_\alpha$
as the connected component in $TW$ with index $\alpha$ and $W_\alpha=T^{-1} V_\alpha$.
There is $q_0\in (0, 1]$ such that 
\beqn
  \liminf_{\delta\to 0}\
   \sup_{W\colon |W|<\delta}\sum_{\alpha}
   \left(\frac{|W|}{|V_{\alpha}|}\right)^{q_0} \frac{|W_\alpha|}{|W|}<1,
      \label{step1} 
\eeqn 
where the supremum is taken over all unstable curves $W$ in $M$.

\subsection{Statement of the main results}

The main result in this paper is to prove the almost sure invariance principle 
for the system $(M, T, \mu)$, which satisfies Assumptions \textbf{(H1)}-\textbf{(H5)}, 
with respect to the process
generated by a sequence of dynamically H\"older observables.
We first recall the definition of such functions.

\begin{defn} A measurable function $f: M\to \IR$ is said to be forward dynamically H\"older continuous if
there exists $\vartheta\in (0, 1)$ such that 
\beqn
|f|_{\vartheta}^+:=\sup\left\{\frac{|f(x)-f(y)|}{\vartheta^{\bs_+(x,y)}}:\ x\ne y\ \text{lie 
on a homogeneous unstable curve} \right\}<\infty,
\eeqn
where $\bs_{+}(\cdot, \cdot)$ is the forward separation time given by Definition \ref{separation time}.
The constant $\vartheta$ is called the dynamically H\"older exponent of $f$, and is usually denoted by $\vartheta_f$.
We denote the space of such functions by $\cH^+_\vartheta$, 
and set $\cH^+:=\cup_{\vartheta\in (0, 1)} \cH^+_\vartheta$. 

In a similar fashion, we define the space $\cH_\vartheta^-$ and $\cH^-$ of backward 
dynamically H\"older continuous functions. Also, we denote 
$\cH_\vartheta:=\cH^+_\vartheta\cap \cH^-_\vartheta$, and 
$\cH:=\cH^+\cap \cH^-$.
\end{defn}

\begin{remark}
Note that any H\"older continuous function is automatically 
dynamical H\"older continuous. 
However, a dynamically H\"older function can be only piecewise continuous, 
and it may not be bounded. 
\end{remark}

We need to assume certain integrability for the observables. 
Given an $L^s$-integrable function $f$ on $M$ for some $s\ge 1$, we denote
$\IE(f)=\int f d\mu$ and $\|f\|_{L^s}=\IE(|f|^s)^{1/s}$. 

For convenience, we shall use the following notations: 
given two sequence $a_n$ and $b_n$ of non-negative numbers,
we write $a_n=o(b_n)$ if $\lim_{n\to\infty} a_n/b_n=0$;
we write $a_n=\cO(b_n)$ or $a_n\ll b_n$ if $a_n\le Cb_n$ for some constant $C>0$,
which is independent of $n$;
and we denote $a_n\asymp b_n$ if $a_n\ll b_n$ and $a_n\gg b_n$.

We are now ready to state our main result.

\begin{theorem}\label{main}
Let $\bX_{\bbf}=\{X_n\}_{n\ge 0}:=\{f_n\circ T^n\}_{n\ge 0}$ be a random process generated by
a sequence $\bbf=\{f_n\}_{n\ge 0}$ of functions, which satisfies the following conditions: 
\begin{itemize}
\setlength\itemsep{0em}
\item[(1)] There are $\vartheta_\bbf\in (0, 1)$ and $\beta_\bbf\in [0, \infty)$ 
such that $f_n\in \cH_{\vartheta_\bbf}$ and
\beqn
|f_n|_{\vartheta_\bbf}^+ + |f_n|_{\vartheta_\bbf}^-  \ll n^{\beta_\bbf}.
\eeqn
\item[(2)] There is $p>4$ such that 
$f_n\in L^{p}$ with $\IE(f_n)=0$. Moreover, there are constants
$\kappa_p\ge \kappa_2>\frac{1}{4}$ such that 
\beq\label{M-Z}
\sigma_n:=\left\|\sum_{k=0}^{n-1} X_k \right\|_{L^2} \gg n^{\kappa_2}, \ \
\text{and}\ \ \
\sup_{m\ge 0} \left\|\sum_{k=m}^{m+n-1} X_k \right\|_{L^p} \ll n^{\kappa_p}.
\eeq
\end{itemize}
Then the process $\bX_{\bbf}$ satisfies an ASIP for any error exponent 
$\lambda\in \left( \max\{\frac14, \frac{1}{8\kappa_2}\}, \frac{1}{2}\right)$, that is,
there exists a Wiener process $W(\cdot)$ such that
\beq\label{ASIP1}
\left|\sum_{k=0}^{n-1} X_k - W(\sigma_n^2) \right|=\cO(\sigma_n^{2\lambda}), \  \ \text{a.s.}.
\eeq
\end{theorem}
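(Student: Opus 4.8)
The plan is to reduce the ASIP for the non-stationary, possibly unbounded process $\bX_{\bbf}$ to a martingale ASIP, following and extending Chernov's coboundary/martingale approximation scheme \cite{C06}, and then to invoke Shao's martingale version of the ASIP \cite{Shao93}. First I would introduce the natural decreasing family of $\sigma$-algebras $\{\cF_k\}$ generated by the (countable) partition of $M$ into connected components of $M\setminus S_{2k}$ refined under $T^k$ — equivalently the partition by forward/backward separation time $\geq k$ — and establish that this filtration is exponentially $\alpha$-mixing, using \textbf{(H2)}(3), the distortion bounds \textbf{(H3)}(2), the absolute continuity \textbf{(H3)}(3) and the one-step expansion \textbf{(H5)}; this is the dynamical heart and mirrors the coupling/standard-pair arguments already in \cite{C99,CD09,CM}. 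The dynamically H\"older hypothesis (1) is exactly what is needed to control $\EE(X_n\mid\cF_k)$ and $X_n-\EE(X_n\mid\cF_k)$ at a geometric rate $\vartheta_\bbf^{\,k}$ up to the polynomial factor $n^{\beta_\bbf}$, which is harmless after the geometric gain.

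Next I would perform the martingale approximation. Writing $S_n=\sum_{k=0}^{n-1}X_k$, I would split each $X_k$, relative to a suitably chosen ``gap'' schedule, into a reverse-martingale-difference part plus a telescoping coboundary plus a small remainder, exactly as in Chernov's construction but carried out block-by-block to accommodate non-stationarity: group the indices into consecutive blocks of slowly growing length, condition on the $\sigma$-algebra at the block scale, and collect the centered conditional block sums into a martingale difference sequence $\{D_j\}$ with respect to the coarsened filtration. The integrability input \eqref{M-Z} — the Marcinkiewicz--Zygmund-type $L^p$ bound with $p>4$ on all windows — together with the $\alpha$-mixing rate gives, via Rosenthal/Burkholder-type maximal inequalities, that the accumulated coboundary and remainder terms are $\cO(\sigma_n^{2\lambda})$ almost surely for $\lambda$ in the asserted range, and that $\{D_j\}$ inherits uniform $L^p$ bounds. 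The lower bound $\sigma_n\gg n^{\kappa_2}$ with $\kappa_2>\tfrac14$ is what guarantees the error term is genuinely of smaller order than $\sigma_n$ and pins down the exponent constraint $\lambda>\max\{\tfrac14,\tfrac1{8\kappa_2}\}$.

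Then I would verify the hypotheses of Shao's theorem for $\{D_j\}$: the conditional-variance sums $\sum_{j}\EE(D_j^2\mid\text{past})$ must be shown to be asymptotic to $\sigma_n^2$ with a controlled fluctuation (here one uses that the conditional variances concentrate around their means by the mixing rate and the $L^p$ control, and that the block construction makes the cross terms negligible), and the $L^p$ moment bound on $D_j$ supplies the Lindeberg-type / moment condition Shao requires to get the error exponent expressed through $p$ and through $\kappa_2$. Feeding this in produces a Wiener process $W$ with $|\sum_j D_j - W(\sigma_n^2)| = \cO(\sigma_n^{2\lambda})$ a.s., and undoing the approximation gives \eqref{ASIP1}.

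The main obstacle I expect is the first step combined with the non-stationarity bookkeeping: establishing the exponential $\alpha$-mixing for the singularity-adapted filtration in sufficient quantitative form, and then choosing the block lengths and gap schedule so that simultaneously (a) the coboundary telescopes with a.s.\ error $o(\sigma_n)$ of the right polynomial order, (b) the martingale differences have uniformly bounded $L^p$ norms despite the observables being unbounded and the variances growing nonlinearly, and (c) the conditional variance asymptotics hold. All three must be reconciled against the single lower-growth assumption $\sigma_n\gg n^{\kappa_2}$, so the delicate point is the interplay between the mixing rate, the exponent $p$, and $\kappa_2$ that yields precisely the admissible interval $\lambda\in\left(\max\{\tfrac14,\tfrac1{8\kappa_2}\},\tfrac12\right)$.
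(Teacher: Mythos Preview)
Your proposal is correct and follows essentially the same route as the paper: build the singularity-adapted filtration $\{\fF_m^n\}$, derive exponential $\alpha$-mixing, group indices into blocks of slowly growing length $\lceil j^\eps\rceil$, replace $X_k$ by a conditional expectation $g_k\circ T^k$ on each block, form the block sums $Y_j$, apply the Gordin-type coboundary decomposition $\xi_j=Y_j-u_j+u_{j+1}$ with $u_j=\sum_{k\ge0}\IE(Y_{j+k}\mid\fG_{j-1})$, verify Shao's two hypotheses for $\{\xi_j\}$, and finally transfer the ASIP back to $\sum X_k$. Two small calibrations relative to the paper: the exponential $\alpha$-mixing is not re-proved from the hyperbolicity assumptions but is a one-line corollary of the already-established exponential decay of correlations for bounded dynamically H\"older observables (indicator functions of $\fF$-measurable sets have zero dynamical H\"older seminorm); and the crucial conditional-variance condition \eqref{Shao Cond1} is obtained not via Rosenthal/Burkholder but by a second-moment Gal--Koksma argument, expanding $\IE\bigl(\sum_j R_j\bigr)^2$ with $R_j=\IE(\xi_j^2\mid\fG_{j-1})-\IE\xi_j^2$ and bounding the resulting five groups of cross terms directly through the $\alpha$-mixing covariance inequality.
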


\vskip.1in

\begin{remark}
It is well known that a zero-mean independent process
$\bX=\{X_n\}_{n\ge 0}$ with finite $s$-th moment (for some $s\ge 1$) 
satisfies the Marcinkiewicz-Zygmund inequalities, i.e.,
$
 \left\|\sum_{k=m}^{m+n-1} X_k \right\|_{L^s} \asymp 
 \left\| \left(\sum_{k=m}^{m+n-1}  X_k^2 \right)^{\frac12}\right\|_{L^s}.
$
Such type of inequalities were later generalized to martingale difference sequence, 
strongly mixing processes, etc (see e.g. \cite{MZ37, Yo80}).
We note that the term $\left\| \left(\sum_{k=m}^{m+n-1}  X_k^2 \right)^{\frac12}\right\|_{L^s}$ is of order $\sqrt n$
for stationary iid. processes.
Due to the dependence and non-stationarity in our setting, there is no a priori information on
$\left\| \left(\sum_{k=m}^{m+n-1}  X_k^2 \right)^{\frac12}\right\|_{L^s}$. 
To this end,  in terms of powers of $n$,
we directly impose the $2$nd moment lower bound and $p$-th moment upper bound 
in \eqref{M-Z}  for the partial sum $\sum_{k=m}^{m+n-1} X_k$.

\end{remark}

Condition (1) in Theorem~\ref{main} implies that every function $f_n$ is dynamically H\"older continuous
with a common exponent $\vartheta_\bbf$, while the dynamically H\"older semi-norms of $f_n$ are allowed to
grow in a polynomial rate.
Condition (2) implies that the growth rate of partial sum variances $\sigma_n^2$ 
is of order between $n^{2\kappa_2}$ and $n^{2\kappa_p}$. 
In particular, if $\kappa_2=\kappa_p=\frac12$, then
the growth is asymptotically linear, i.e., $\sigma_n^2\asymp n$. 

We notice that the error exponent $\lambda$ in \eqref{ASIP1} does not depend 
on the values of $\vartheta_\bbf$, $\beta_\bbf$ and $\kappa_p$,
and it can be chosen arbitrarily close to $\frac{1}{4}$ if $\kappa_2=\frac12$.
In the case when $p\in (2, 4]$ and $\kappa_2>\frac{1}{p}$, 
our result still holds with $\lambda\in \left( \max\{\frac14, \frac{1}{2p\kappa_2}\}, \frac{1}{2}\right)$,
but requires advanced moment inequalities in the proof of a technical lemma - Lemma~\ref{lem: est Rj}.
For simplicity, we just prove the case when $p>4$, which is sufficient for all of our applications.

\vskip.1in

Note that the ASIP is the strongest form - it implies many other limit laws, such as 
the weak invariance principle, 
the almost sure central limit theorem, 
and the law of iterated logarithm (see e.g. \cite{PhSt75} for their proofs and more details). 

\begin{theorem} 
Let $\bX_{\bbf}=\{X_n\}_{n\ge 0}:=\{f_n\circ T^n\}_{n\ge 0}$ be the random process
satisfying the assumptions in Theorem~\ref{main}.
We have the following limit laws:
\begin{itemize}
\setlength\itemsep{0em}
\item[(1)] Weak Invariance Principle: for any $t\in [0, 1]$, 
\beqn
\dfrac{1}{\sigma_n}\sum_{k=0}^{\lfloor nt\rfloor-1} f_k\circ T^k 
\xrightarrow{\ \ in\ distribution\ \ } W(t), \ \ \ \text{as}\ n\to \infty,
\eeqn
where $W(\cdot)$ is a Wiener process.
\item[(2)] Almost Sure Central Limit Theorem: 
we denote $S_n=\sum_{k=0}^{n-1} f_k\circ T^k$, and let $\delta(\cdot)$ be 
the Dirac measure on $\IR$, then for $\mu$-almost every $x\in M$,
\beqn
\dfrac{1}{\log \sigma_n^2} \sum_{k=1}^{n} \frac{1}{\sigma_k^2} \delta_{S_k(x)} 
\xrightarrow{\ \ in\ distribution\ \ } N(0, 1), 
\ \ \ \text{as}\ n\to \infty,
\eeqn
where $N(0, 1)$ is the standard normal distribution.
\item[(3)] Law of Iterated Logarithm: for $\mu$-almost every $x\in M$,
\beqn
\limsup_{n\to \infty} \dfrac{\sum_{k=0}^{n-1} f_k\circ T^k(x)}{\sqrt{2\sigma_n^2 \log\log \sigma_n^2}} =1.
\eeqn
\end{itemize}
\end{theorem}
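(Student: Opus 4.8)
The plan is to deduce all three limit laws directly from the ASIP \eqref{ASIP1} established in Theorem~\ref{main}, following the classical reduction of Philipp and Stout \cite{PhSt75}. First I record two elementary consequences of the hypotheses. Since $\kappa_2>\tfrac14$ we have $\sigma_n\gg n^{\kappa_2}\to\infty$, and since $\lambda<\tfrac12$ the error term in \eqref{ASIP1} satisfies $\sigma_n^{2\lambda}=o(\sigma_n)$ and in fact $\sigma_n^{2\lambda}=o\!\left(\sqrt{\sigma_n^2\log\log\sigma_n^2}\right)$. Moreover, taking $n=1$ in the $L^p$ bound of \eqref{M-Z} gives $\sup_{m\ge0}\|X_m\|_{L^p}\ll 1$, hence $\|X_n\|_{L^2}=\cO(1)$, so by the triangle inequality $\sigma_{n+1}\le\sigma_n+\cO(1)$ and therefore $\sigma_{n+1}^2-\sigma_n^2=\cO(\sigma_n)=o(\sigma_n^2)$ and $\sigma_{n+1}^2/\sigma_n^2\to1$. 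Thus the family of clocks $\{\sigma_n^2\}$ tends to $+\infty$ with asymptotically negligible gaps, which is precisely what is needed to transfer continuous-time limit theorems for the Wiener process $W(\cdot)$ to the discrete sequence $\{W(\sigma_n^2)\}$, and thence, via \eqref{ASIP1}, to $S_n:=\sum_{k=0}^{n-1}f_k\circ T^k$.

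For the law of the iterated logarithm, I would write $S_n=W(\sigma_n^2)+\cO(\sigma_n^{2\lambda})$ a.s.\ on the enlarged probability space. The Hartman--Wintner LIL for Brownian motion gives $\limsup_{s\to\infty}W(s)/\sqrt{2s\log\log s}=1$ a.s. Since $\sigma_n^2\to\infty$ with $\sigma_{n+1}^2-\sigma_n^2=o(\sigma_n^2)$, the continuity (Lévy modulus) of $W$ on the intervening intervals shows that the same $\limsup$ is attained along the subsequence $s=\sigma_n^2$; finally $\cO(\sigma_n^{2\lambda})=o\!\left(\sqrt{\sigma_n^2\log\log\sigma_n^2}\right)$ is absorbed, yielding (3). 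The almost sure central limit theorem is obtained in the same spirit: \eqref{ASIP1} replaces $S_k/\sigma_k$ by $W(\sigma_k^2)/\sigma_k$ up to an a.s.\ $o(1)$ error, the logarithmically weighted empirical measures form a bounded sequence of probability measures whose weak limit is insensitive to such errors (because the weights sum to $\asymp\log\sigma_n^2$ by comparison of $\sum_k(\sigma_{k+1}^2-\sigma_k^2)/\sigma_k^2$ with $\int ds/s$), and the ASCLT for Brownian motion identifies the limit as $N(0,1)$.

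For the weak invariance principle, set $S_n(t)=\sigma_n^{-1}S_{\lfloor nt\rfloor}$; the ASIP gives $S_n(t)=\sigma_n^{-1}W(\sigma_{\lfloor nt\rfloor}^2)+o(1)$ a.s. By the scaling invariance of Brownian motion, $\sigma_n^{-1}W(\sigma_{\lfloor nt\rfloor}^2)$ has the law of $W\!\left(\sigma_{\lfloor nt\rfloor}^2/\sigma_n^2\right)$, so convergence of finite-dimensional distributions together with tightness in $D[0,1]$ reduces to the deterministic fact that $\sigma_{\lfloor nt\rfloor}^2/\sigma_n^2$ converges to a continuous nondecreasing limit as $n\to\infty$. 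This is the one point that genuinely uses the structure of the variances: when $\kappa_2=\kappa_p=\tfrac12$ (or, more generally, when $\sigma_n^2$ is regularly varying of index one) this limit equals $t$ and the statement holds verbatim, while in general the limiting process is $W$ composed with the deterministic time change $t\mapsto\lim_n\sigma_{\lfloor nt\rfloor}^2/\sigma_n^2$. The main obstacle is therefore not the probabilistic transfer — which is routine given the gap estimate $\sigma_{n+1}^2-\sigma_n^2=o(\sigma_n^2)$ — but verifying (or assuming) enough regularity of $n\mapsto\sigma_n^2$ to make the WIP's time parametrization meaningful; the other two laws require no such assumption.
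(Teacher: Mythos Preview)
Your approach is exactly the paper's: the theorem is stated immediately after the sentence ``Note that the ASIP is the strongest form --- it implies many other limit laws \ldots\ (see e.g.\ \cite{PhSt75} for their proofs and more details),'' and no further argument is given. Your sketch in fact supplies considerably more detail than the paper, and your caveat about the weak invariance principle---that $\sigma_{\lfloor nt\rfloor}^2/\sigma_n^2\to t$ is needed but does not follow from \eqref{M-Z} alone---correctly flags an implicit regularity assumption that the paper passes over.
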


\vskip.1in

\section{Preliminaries from Probability Theory}\label{sec: prelim}

In this section, we recall several useful theorems in the probability theory.
Let $(M, \mu)$ be a standard probability space. 

\begin{lemma}[Borel-Cantelli lemma]\label{lem: B-C}
If $\{E_n\}_{n\ge 1}$ is a sequence of events on $(M, \mu)$
such that $\sum_{n=1}^\infty \mu(E_n)<\infty$, 
then $\mu\left( \cap_{n=1}^\infty \cup_{k\ge n} E_k \right)=0.$
\end{lemma}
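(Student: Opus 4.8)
The plan is to prove the first Borel--Cantelli lemma by the standard tail-estimate argument. First I would introduce the notation $F_n := \bigcup_{k\ge n} E_k$ for the $n$-th tail union, so that the event of interest is $\bigcap_{n\ge 1} F_n$ (the set of points lying in infinitely many $E_k$). The first key observation is that the sequence $\{F_n\}_{n\ge 1}$ is nonincreasing, $F_1 \supseteq F_2 \supseteq \cdots$, so by continuity of the measure from above (using that $\mu$ is a finite measure, $\mu(M)=1$) we have $\mu\big(\bigcap_{n\ge 1} F_n\big) = \lim_{n\to\infty}\mu(F_n)$.

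The second key step is the union bound: for each fixed $n$, countable subadditivity gives $\mu(F_n) \le \sum_{k\ge n}\mu(E_k)$. Since the full series $\sum_{n\ge 1}\mu(E_n)$ is assumed to converge, its tails $\sum_{k\ge n}\mu(E_k)$ tend to $0$ as $n\to\infty$. Combining the two steps yields $\mu\big(\bigcap_{n\ge 1} F_n\big) = \lim_{n\to\infty}\mu(F_n) \le \lim_{n\to\infty}\sum_{k\ge n}\mu(E_k) = 0$, which is exactly the assertion $\mu\big(\bigcap_{n=1}^\infty \bigcup_{k\ge n} E_k\big)=0$.

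There is essentially no obstacle here: the only points requiring a word of justification are that $\mu$ being a probability measure makes continuity from above legitimate (no infinite-measure pathology), and that convergence of a nonnegative series forces its tails to vanish. Both are routine, so the proof is short; I would present it in three or four lines without further elaboration.
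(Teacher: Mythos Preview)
Your argument is the standard and correct proof of the first Borel--Cantelli lemma. Note that the paper itself does not supply a proof of this lemma at all: it is simply stated in Section~\ref{sec: prelim} as a classical preliminary fact from probability theory, so there is nothing to compare against.
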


We introduce a special case of the results by Gal-Koksma (Theorem A1 in \cite{PhSt75}).

\begin{proposition}\label{Gal-Koksma}
Let $\{X_n\}_{n\ge 0}$ be a sequence of zero-mean random variables
with finite second moments. Suppose there is $\kappa>0$ such that   
for any $m\ge 0$,  $n\ge 1$,
\beqn
\IE\left(\sum_{k=m}^{m+n-1} X_k\right)^2\ll (m+n)^\kappa - m^\kappa,
\eeqn
then for any $\delta>0$,
\beqn
\sum_{k=0}^{n-1} X_k =o\left(n^{\frac{\kappa}{2} + \delta} \right), \ \ \text{a.s.}.
\eeqn
\end{proposition}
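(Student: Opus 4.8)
\textbf{Proof proposal for Proposition~\ref{Gal-Koksma}.}
The plan is to derive this special case of the Gal--Koksma theorem from the Borel--Cantelli lemma (Lemma~\ref{lem: B-C}) by a dyadic blocking argument, following the classical scheme in \cite{PhSt75}. First I would fix $\delta>0$ and, for notational convenience, write $S(m,n)=\sum_{k=m}^{m+n-1} X_k$ and $S_n=S(0,n)=\sum_{k=0}^{n-1}X_k$, so that the hypothesis reads $\IE\,S(m,n)^2\ll (m+n)^\kappa - m^\kappa$ uniformly in $m\ge 0$, $n\ge 1$. The goal is $S_n = o(n^{\kappa/2+\delta})$ almost surely.

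The main step is to control the sums over dyadic blocks. For each integer $j\ge 0$ set $I_j=\{2^j,\dots,2^{j+1}-1\}$ and consider $S(2^j,2^j)=\sum_{k\in I_j} X_k$. By the hypothesis with $m=n=2^j$,
\beqn
\IE\bigl(S(2^j,2^j)\bigr)^2 \ll (2^{j+1})^\kappa - (2^j)^\kappa = (2^\kappa-1)\,2^{j\kappa},
\eeqn
so by Chebyshev's inequality, for any $\veps>0$,
\beqn
\mu\Bigl( \bigl| S(2^j,2^j) \bigr| > \veps\, 2^{j(\kappa/2+\delta)} \Bigr)
\ll \frac{2^{j\kappa}}{\veps^2\, 2^{j(\kappa+2\delta)}} = \veps^{-2}\, 2^{-2j\delta}.
\eeqn
Since $\sum_{j\ge 0} 2^{-2j\delta}<\infty$, the Borel--Cantelli lemma gives that almost surely $|S(2^j,2^j)| \le \veps\, 2^{j(\kappa/2+\delta)}$ for all sufficiently large $j$; letting $\veps\to 0$ along a countable sequence, we conclude $S(2^j,2^j)=o\bigl(2^{j(\kappa/2+\delta)}\bigr)$ almost surely. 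Summing the telescoping identity $S_{2^J} = \sum_{j=0}^{J-1} S(2^j,2^j) + X_0$ (with $S_1=X_0$, whose size is controlled by $\IE X_0^2\ll 1$) and using that $\sum_{j=0}^{J-1} 2^{j(\kappa/2+\delta)}\ll 2^{J(\kappa/2+\delta)}$, we obtain $S_{2^J} = o\bigl(2^{J(\kappa/2+\delta)}\bigr)$ almost surely.

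It remains to pass from the subsequence $n=2^J$ to all $n$, which is where the within-block fluctuations must be handled; I expect this to be the only delicate point. Given $n$, let $J=\lfloor \log_2 n\rfloor$, so $2^J\le n<2^{J+1}$, and write $S_n = S_{2^J} + S(2^J, n-2^J)$. The first term is $o(n^{\kappa/2+\delta})$ by the previous step. For the second term one cannot simply apply Chebyshev to a single event, because the endpoint $n-2^J$ ranges over up to $2^J$ values; instead one uses the standard maximal-inequality trick. Let $R_J=\max_{0\le r\le 2^J} |S(2^J,r)|$. By a chaining/bisection argument over the block $I_J$ — splitting $[2^J,2^{J+1})$ into dyadic sub-blocks of lengths $2^J,2^{J-1},\dots,1$ and applying the hypothesis to each sub-block (each of which satisfies $\IE\,S(m,\ell)^2\ll \ell\cdot 2^{(J-1)\kappa}\cdot$const$\ll 2^{J\kappa}$ after summing, with an extra factor polynomial in $J$, i.e. logarithmic in $n$) — one gets $\IE\,R_J^2 \ll J^2\, 2^{J\kappa}$ or so, hence $\mu(R_J>\veps\,2^{J(\kappa/2+\delta)}) \ll \veps^{-2} J^2 2^{-2J\delta}$, whose sum over $J$ still converges. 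Borel--Cantelli then yields $R_J = o\bigl(2^{J(\kappa/2+\delta)}\bigr) = o(n^{\kappa/2+\delta})$ almost surely, and combining the two bounds finishes the proof. The one thing to be careful about is making the dyadic maximal inequality quantitative enough that the extra polynomial-in-$J$ factor is absorbed by the geometric decay $2^{-2J\delta}$, which works for every $\delta>0$; this is exactly why the conclusion carries the arbitrarily small loss $n^{\delta}$ rather than being sharp.
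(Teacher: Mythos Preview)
The paper does not supply its own proof of this proposition; it is quoted as a special case of the Gal--Koksma theorem (Theorem~A1 in \cite{PhSt75}) and used as a black box. Your dyadic-block argument is the classical route to this statement and is essentially correct.

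The one place where your sketch is a little loose is the maximal bound $\IE R_J^2 \ll J^2\, 2^{J\kappa}$. The clean way to get it is to observe that at each dyadic level $i\in\{0,\dots,J-1\}$ the variances of the $2^{J-i}$ sub-blocks \emph{telescope}: by the hypothesis,
\beqn
\sum_{k=0}^{2^{J-i}-1} \IE\,S\bigl(2^J+k2^i,\,2^i\bigr)^2
\ \ll\ \sum_{k=0}^{2^{J-i}-1}\Bigl[(2^J+(k{+}1)2^i)^\kappa-(2^J+k2^i)^\kappa\Bigr]
= (2^{J+1})^\kappa-(2^J)^\kappa \asymp 2^{J\kappa},
\eeqn
uniformly in $i$. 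Since any $r\le 2^J$ decomposes into at most $J$ dyadic pieces, $R_J\le \sum_{i=0}^{J-1} M_i$ with $M_i=\max_k |S(2^J+k2^i,2^i)|$, and Cauchy--Schwarz plus the crude bound $\IE M_i^2\le \sum_k \IE S(\cdot,2^i)^2\ll 2^{J\kappa}$ gives $\IE R_J^2\ll J^2\,2^{J\kappa}$. (Your parenthetical ``$\IE S(m,\ell)^2\ll \ell\cdot 2^{(J-1)\kappa}$'' is not quite the right intermediate estimate; it is the telescoping that does the work.) With that detail filled in, Borel--Cantelli closes the argument exactly as you describe, and the polynomial factor $J^2$ is indeed absorbed by $2^{-2J\delta}$.
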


Let $\fF$ and $\fG$ be two $\sigma$-algebras on the space $(M, \mu)$.

\begin{defn} 
The $\alpha$-mixing coefficient between $\fF$ and $\fG$ is given by 
\beq\label{alpha coeff}
\alpha(\fF, \fG):=\sup_{A\in \fF} \sup_{B\in \fG} 
\left|\mu(A\cap B)-\mu(A)\mu(B) \right|.
\eeq
\end{defn}

Note that $\alpha(\fF, \fG)\le 2$. We have the following covariance inequality.

\begin{lemma}[Lemma 7.2.1 in \cite{PhSt75}]\label{lem: cov inequality} 
Let $s_1, s_2$ and $s_3$ be positive real numbers such that $1/s_1+1/s_2+1/s_3=1$.
For any $X\in L^{s_1}(M, \fF, \mu)$ and any $Y\in L^{s_2}(M, \fG, \mu)$, 
\beqn
\left|\IE(XY)-\IE(X)\IE(Y)\right|\le 10  \alpha(\fF, \fG)^{\frac{1}{s_3}} \|X\|_{L^{s_1}} \|Y\|_{L^{s_2}}.
\eeqn
\end{lemma}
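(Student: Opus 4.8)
The plan is to reduce this classical covariance inequality (a Davydov-type estimate) to the case of \emph{bounded} random variables, for which the definition \eqref{alpha coeff} can be used directly, and then to recover the $L^{s_1}$--$L^{s_2}$ statement by a truncation argument. Since both sides are homogeneous of degree one in $X$ and in $Y$, I may and will normalize $\|X\|_{L^{s_1}}=\|Y\|_{L^{s_2}}=1$, and write $\Cov(X,Y):=\IE(XY)-\IE(X)\IE(Y)$. I would dispose of the trivial regime first: when $\alpha(\fF,\fG)\ge 1$ the right-hand side is at least $10$, while H\"older's inequality together with $1/s_1+1/s_2\le 1$ already gives $|\Cov(X,Y)|\le \IE|XY|+\IE|X|\,\IE|Y|\le 2$, so the inequality is automatic. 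Hence I assume $\alpha:=\alpha(\fF,\fG)\le 1$ henceforth.

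First I would establish the bounded estimate
\beqn
|\Cov(X,Y)|\le 4\,\alpha\,\|X\|_{\infty}\|Y\|_{\infty}
\eeqn
for $\fF$-measurable $X$ and $\fG$-measurable $Y$. Introducing the $\fF$-measurable variable $\eta:=\IE(Y\mid\fF)-\IE(Y)$, one has $\Cov(X,Y)=\IE(X\eta)$, so $|\Cov(X,Y)|\le\|X\|_\infty\,\IE|\eta|$ and it suffices to bound $\IE|\eta|$. Writing $\mathrm{sgn}(\eta)=\mathbf 1_{A_1}-\mathbf 1_{A_2}$ with $A_1,A_2\in\fF$ expresses $\IE|\eta|$ as a difference of two terms $\IE(\mathbf 1_{A_i}Y)-\mu(A_i)\IE(Y)$; applying the same sign trick on the $\fG$-side collapses each of these onto pairs of indicators $\mathbf 1_A,\mathbf 1_B$ with $A\in\fF$, $B\in\fG$, where \eqref{alpha coeff} contributes $\alpha$ per pair. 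Tracking the two splittings yields $\IE|\eta|\le 4\alpha\|Y\|_\infty$ and hence the claim.

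Next I would pass to the general case by truncation. Set $X=U+R$ and $Y=V+S$, where $U,V$ are the truncations of $X,Y$ to $[-T,T]$ and $[-T',T']$, so that $\|U\|_\infty\le T$, $\|V\|_\infty\le T'$, $|R|=(|X|-T)^+$, and $|S|=(|Y|-T')^+$. Expanding into the four pieces $\Cov(U,V)+\Cov(U,S)+\Cov(R,V)+\Cov(R,S)$, I bound the bounded--bounded piece by the previous step ($\le 4\alpha TT'$), the two mixed pieces crudely by $|\Cov(U,S)|\le 2\|U\|_\infty\IE|S|$ and $|\Cov(R,V)|\le 2\|V\|_\infty\IE|R|$, and the tail--tail piece by $|\Cov(R,S)|\le \IE|RS|+\IE|R|\,\IE|S|$. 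The tail moments follow from Markov and H\"older: $\IE|R|\le\mu(|X|>T)^{1-1/s_1}\le T^{1-s_1}$, and likewise $\IE|S|\le T'^{1-s_2}$, while the crucial cross term is handled by the three-factor H\"older inequality with exponents $(s_1,s_2,s_3)$, giving $\IE|RS|\le\IE\!\left(|X||Y|\mathbf 1_{\{|X|>T\}}\right)\le\mu(|X|>T)^{1/s_3}\le T^{-s_1/s_3}$.

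Finally I would optimize the thresholds. Choosing $T=\alpha^{-1/s_1}$ and $T'=\alpha^{-1/s_2}$ and invoking the relation $1/s_1+1/s_2+1/s_3=1$, a short computation shows that each of the contributions $4\alpha TT'$, $2TT'^{1-s_2}$, $2T'T^{1-s_1}$, and $T^{-s_1/s_3}$ carries exactly the exponent $\alpha^{1/s_3}$, while the last product $\IE|R|\,\IE|S|$ carries the smaller $\alpha^{1+1/s_3}\le\alpha^{1/s_3}$; summing the coefficients $4,2,2,1,1$ yields the bound $10\,\alpha^{1/s_3}$ under the normalization, which is the assertion. I expect the main obstacle to be the tail--tail term $\Cov(R,S)$: it is the only place where all three exponents must be used at once, and it is what dictates the precise H\"older balancing above. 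The secondary difficulty is the bookkeeping needed to check that the single choice $T=\alpha^{-1/s_1}$, $T'=\alpha^{-1/s_2}$ forces every piece onto the common rate $\alpha^{1/s_3}$ with total constant no larger than $10$ — which, in turn, relied on having first discarded the easy regime $\alpha\ge 1$.
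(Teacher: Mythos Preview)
Your argument is correct and is essentially the classical Davydov proof. The paper itself does not prove this lemma --- it is quoted as Lemma~7.2.1 of Philipp--Stout \cite{PhSt75} and used as a black box --- so there is no in-paper proof to compare against; your truncation-at-$\alpha^{-1/s_i}$ strategy, preceded by the bounded estimate $|\Cov(X,Y)|\le 4\alpha\|X\|_\infty\|Y\|_\infty$, is in fact the argument given in that reference, and your bookkeeping $4+2+2+1+1=10$ reproduces their constant exactly. One minor omission: the degenerate case $\alpha=0$, where your thresholds $T,T'$ are undefined, follows immediately since $\fF$ and $\fG$ are then independent and $\Cov(X,Y)=0$.
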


\begin{defn}\label{def mds} 
$\{(\xi_j, \fG_j)\}_{j\ge 1}$ is called a martingale difference sequence if 
\begin{itemize}
\setlength\itemsep{0em}
\item[(1)] $\fG_j$ is an increasing sequence of $\sigma$-algebras on $(M, \mu)$;
\item[(2)] Each $\xi_j$ is $L^1$-integrable and $\fG_j$-measurable;
\item[(3)] $\IE(\xi_j|\fG_{j-1})=0$ for any $j\ge 2$.
\end{itemize}
\end{defn}

Here is a basic identity for martingale difference sequence $\{(\xi_j, \fG_j)\}_{j\ge 1}$: 
\beq\label{martingale diff identity}
\IE(X\xi_j)=0,
\eeq
for any $\fG_{j-1}$-measurable random variable $X$, as long as $X\xi_j\in L^1$ .

We shall need the following almost sure invariance principle by Shao \cite{Shao93}
for the martingale difference sequences (in the $L^4$-integrable case).

\begin{proposition}[\cite{Shao93}]\label{Shao ASIP} 
Let $\{(\xi_j, \fG_j)\}_{j\ge 1}$ be an $L^4$-integrable martingale difference sequence.
Put $b_r^2=\IE\left( \sum_{j=1}^r \xi_j \right)^2$. Assume that there exists a sequence 
$\{a_r\}_{r\ge 1}$ of non-decreasing positive numbers with $\lim_{r\to\infty} a_r=\infty$ such that
\begin{eqnarray}
& & \sum_{j=1}^r \left[ \IE\left(\xi_j^2\left| \fG_{j-1} \right. \right) - \IE\xi_j^2 \right] = o(a_r), \ a.s. \label{Shao Cond1} \\
& & \sum_{j=1}^\infty a_j^{-2} \IE|\xi_j|^{4} <\infty.  \label{Shao Cond2}
\end{eqnarray}
Then $\{\xi_j\}_{j\ge 1}$ satisfies an ASIP of the following form: 
there exists a Wiener process  $W(\cdot)$ such that 
\beqn
\left|\sum_{j=1}^r \xi_j - W(b_r^2)\right|
=o\left( \left( a_r \left( \left|\log(b_r^2/a_r)\right| + \log\log a_r \right)    \right)^{1/2} \right), \ a.s.
\eeqn
\end{proposition}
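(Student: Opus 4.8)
The plan is to realize the martingale $S_r := \sum_{j=1}^r \xi_j$ as a time-changed Brownian motion via the Skorokhod embedding, to show that the random time change is almost surely close to $b_r^2$ at the scale $a_r$, and then to convert this closeness into the stated approximation rate using sharp increment estimates for Brownian motion. First I would record the elementary reduction $b_r^2 = \sum_{j=1}^r \IE\xi_j^2$: by the defining property of a martingale difference sequence (Definition~\ref{def mds}) and the identity \eqref{martingale diff identity}, for $i<j$ the variable $\xi_i$ is $\fG_{j-1}$-measurable, so $\IE(\xi_i\xi_j)=0$, and the quadratic $\IE(\sum_{j\le r}\xi_j)^2$ collapses to the sum of individual variances. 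I would then invoke the classical conditional Skorokhod embedding for martingales: after passing (if necessary) to an enlarged probability space, there exist a standard Brownian motion $W(\cdot)$, a filtration $\{\cG_r\}_{r\ge 0}$ with $\fG_r\subseteq\cG_r$, and nonnegative $\cG_r$-measurable random variables $\eta_r$ such that, writing $\tau_r:=\sum_{j=1}^r \eta_j$,
\[
W(\tau_r) = S_r, \qquad \IE(\eta_r \mid \cG_{r-1}) = \IE(\xi_r^2 \mid \fG_{r-1}), \qquad \IE(\eta_r^2 \mid \cG_{r-1}) \le C\,\IE(\xi_r^4 \mid \fG_{r-1}),
\]
for a universal constant $C$. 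The last bound is precisely where the $L^4$-hypothesis is used.

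The second step is to prove $\tau_r - b_r^2 = o(a_r)$ a.s. I would split
\[
\tau_r - b_r^2 = \sum_{j=1}^r \bigl(\eta_j - \IE(\eta_j\mid\cG_{j-1})\bigr) + \sum_{j=1}^r \bigl(\IE(\xi_j^2\mid\fG_{j-1}) - \IE\xi_j^2\bigr) =: M_r + A_r .
\]
The predictable part $A_r$ is $o(a_r)$ a.s. directly by hypothesis \eqref{Shao Cond1}. The term $M_r$ is a zero-mean martingale, and combining the embedding moment bound with \eqref{Shao Cond2},
\[
\sum_{j=1}^\infty a_j^{-2}\,\IE\bigl[(\eta_j - \IE(\eta_j\mid\cG_{j-1}))^2\bigr] \le \sum_{j=1}^\infty a_j^{-2}\,\IE\eta_j^2 \le C\sum_{j=1}^\infty a_j^{-2}\,\IE|\xi_j|^4 < \infty .
\]
Hence the weighted martingale $\sum_{j\le r} a_j^{-1}(\eta_j - \IE(\eta_j\mid\cG_{j-1}))$ is $L^2$-bounded, so it converges a.s.; since $a_r$ is non-decreasing to infinity, Kronecker's lemma yields $M_r = o(a_r)$ a.s. Together these give $\tau_r - b_r^2 = o(a_r)$ a.s.

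The final and most delicate step is to pass from $S_r = W(\tau_r)$ to $W(b_r^2)$. Writing
\[
\left|\sum_{j=1}^r \xi_j - W(b_r^2)\right| = |W(\tau_r) - W(b_r^2)|,
\]
I would bound the right-hand side by the oscillation of $W$ over the interval with endpoints $\tau_r$ and $b_r^2$, whose length is $o(a_r)$. The precise rate then follows from sharp increment (modulus-of-continuity) estimates for Brownian motion of the type developed in \cite{PhSt75}: over a time horizon $\asymp b_r^2$ and an increment window $\asymp a_r$, the maximal oscillation is of order $\sqrt{a_r(|\log(b_r^2/a_r)| + \log\log a_r)}$ a.s. Because the actual window length is $o(a_r)$ rather than merely $O(a_r)$, a Borel--Cantelli argument (Lemma~\ref{lem: B-C}) over a dyadic-in-$r$ blocking of the indices, using Gaussian tail bounds for the relevant maxima, upgrades this to the strict little-$o$ statement and produces exactly $o\bigl((a_r(|\log(b_r^2/a_r)| + \log\log a_r))^{1/2}\bigr)$. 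The hard part is this last step: one must control the supremum of $W$ over a random, data-dependent window and extract the exact logarithmic factor, which requires the sharp Brownian increment asymptotics together with a careful dyadic Borel--Cantelli scheme rather than a crude uniform modulus bound; the embedding and the martingale strong law are comparatively routine. I refer to \cite{Shao93} for the original argument.
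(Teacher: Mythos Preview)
The paper does not give its own proof of this proposition: it is quoted verbatim as a result of Shao \cite{Shao93} and used as a black box in the proof of Lemma~\ref{lem: ASIP xi}. Your sketch is the correct outline of Shao's original argument (Skorokhod embedding, Kronecker's lemma for the martingale part of $\tau_r-b_r^2$, then sharp Brownian increment bounds), so there is nothing to compare---you have supplied what the paper deliberately omits.
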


\section{Proof of Theorem~\ref{main}}\label{sec:proof}

We shall prove our main theorem as follows. 
Firstly, 
we construct a natural family $\fF$ 
of $\sigma$-algebras on $(M, \mu)$, and show that such family is exponentially $\alpha$-mixing.
Secondly, 
we introduce blocks and approximate the sequence $\bbf$ by 
conditional expectation over a special sub-family of $\fF$ on each block.
Furthermore,
we divide the partial sum of $\bX_\bbf$ into a major part $\sum_{j=1}^{r(n)-1} Y_j$
and other negligible parts.
Thirdly, 
we establish the martingale difference 
representation $\{\xi_j\}_{j\ge 1}$ for the process $\{Y_j\}_{j\ge 1}$,
and obtain several preliminary norm estimates.
Fourthly, 
we prove a technical lemma on Condition~\eqref{Shao Cond1} 
and show an ASIP for the martingale difference sequence $\{\xi_j\}_{j\ge 1}$.
Finally, 
we prove the ASIP for the original sequence $\bbf$.

\subsection{The strong mixing property}

We first recall the exponential decay of correlations for the system $ (M, T, \mu)$
for bounded dynamically H\"older observables, which was proven in \cite{CZ09} 
by using the coupling lemma (see e.g. \cite{CM, CD09}).

\begin{proposition}[\cite{CZ09}]\label{exp decay}
There exist $C_0>0$ and $\vartheta_0\in (0, 1)$ such that
for any pair of functions $f\in \cH_{\vartheta_f}^+\cap L^\infty(\mu)$ and $g\in \cH_{\vartheta_g}^-\cap L^\infty(\mu)$ 
and $n\ge 0$,
\beqn
\left| \IE(f\cdot g\circ T^n)  -\IE(f)\IE(g) \right| \le C_{f, g} \theta_{f, g}^n,
\eeqn
where $\theta_{f, g}=\max\{\vartheta_0, \vartheta_f^{1/4}, \vartheta_g^{1/4}\}<1$, and
\beqn
C_{f, g}=C_0\left(\|f\|_{L^\infty} \|g\|_{L^\infty} + \|f\|_{L^\infty} |g|_{\vartheta_g}^- + \|g\|_{L^\infty} |f|_{\vartheta_f}^+ \right).
\eeqn
\end{proposition}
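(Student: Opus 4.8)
The plan is to prove Proposition~\ref{exp decay} by a reduction to the known exponential decay of correlations for \emph{H\"older} observables on $(M,T,\mu)$ established via the coupling lemma in \cite{CM,CD09,CZ09}. The key point is that a bounded forward dynamically H\"older function is well-approximated by genuinely H\"older functions obtained through a $k$-step ``smoothing'' along unstable manifolds, and similarly a bounded backward dynamically H\"older function is approximated by H\"older functions via smoothing along stable manifolds. Concretely, for $f\in\cH_{\vartheta_f}^+\cap L^\infty$ I would define $f_k=\IE(f\mid \xi_k^u)$, the conditional expectation of $f$ with respect to the partition $\xi_k^u$ into ``$k$-cylinders'' determined by $M\setminus S_{-k}$ (refined along unstable fibers). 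Two points in the same element of $\xi_k^u$ have forward separation time $\ge k$, so by the dynamical H\"older bound $\|f-f_k\|_{L^\infty}\le |f|^+_{\vartheta_f}\vartheta_f^{\,k}$; moreover $f_k$ is constant on each $k$-cylinder, hence (after transporting by $T^k$, which expands unstable distances by at least $\Lambda^k$) one checks that $f_k$ is genuinely H\"older with exponent controlled by $\vartheta_f^{1/k}$-type bounds. The analogous construction applies to $g$ using $\xi_k^s$ and the backward separation time.

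First I would fix $n$ and split it as $n=2k+m$ with $k=\lfloor n/4\rfloor$ (the choice of the fraction $1/4$ is what produces the exponent $\vartheta_f^{1/4}$, $\vartheta_g^{1/4}$ in the statement). Then I would write
\beqn
\IE(f\cdot g\circ T^n)-\IE(f)\IE(g)
=\big[\IE(f\cdot g\circ T^n)-\IE(f_k\cdot g_k\circ T^n)\big]
+\big[\IE(f_k\cdot g_k\circ T^n)-\IE(f_k)\IE(g_k)\big]
+\big[\IE(f_k)\IE(g_k)-\IE(f)\IE(g)\big].
\eeqn
The first and third brackets are bounded using $\|f-f_k\|_{L^\infty}+\|g-g_k\|_{L^\infty}\le |f|^+_{\vartheta_f}\vartheta_f^{\,k}+|g|^-_{\vartheta_g}\vartheta_g^{\,k}$ together with $\|f\|_{L^\infty},\|g\|_{L^\infty}<\infty$; these contribute terms of size $\ll (\|f\|_{L^\infty}|g|^-_{\vartheta_g}+\|g\|_{L^\infty}|f|^+_{\vartheta_f})\,(\max\{\vartheta_f,\vartheta_g\})^{k}$. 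For the middle bracket, $f_k$ and $g_k$ are honest H\"older functions, and $g_k\circ T^n=(g_k\circ T^k)\circ T^{n-k}$ where $g_k\circ T^k$ is H\"older with a constant blown up by at most a factor depending on $\Lambda^k$ and the distortion bounds; applying the exponential decay of correlations for H\"older observables over the remaining $n-k=k+m$ iterates yields a bound $\ll C_0\,(\cdots)\,\vartheta_0^{\,m}$, and one organizes constants so that the H\"older norm inflation from the $k$ ``wasted'' iterates is absorbed because $\vartheta_0^{m}$ decays while only $\vartheta_f^{k/4}$-type factors are needed.

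The main obstacle I anticipate is the bookkeeping in the middle term: one must verify that the H\"older seminorm of $g_k\circ T^k$ (and of $f_k$) grows at most geometrically in $k$ at a rate that is beaten by $\vartheta_0^m$ after choosing $k\asymp n/4$, and that the resulting constant has exactly the claimed form $C_0(\|f\|_{L^\infty}\|g\|_{L^\infty}+\|f\|_{L^\infty}|g|^-_{\vartheta_g}+\|g\|_{L^\infty}|f|^+_{\vartheta_f})$ with $\theta_{f,g}=\max\{\vartheta_0,\vartheta_f^{1/4},\vartheta_g^{1/4}\}$. This requires care with assumption (\textbf{H3}) (bounded curvature, distortion, absolute continuity of the stable holonomy) to control how conditional expectations along unstable fibers transform under $T^k$, and with the structure of $S_{\pm k}$ from (\textbf{H2}). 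Once the approximation scheme and the constant tracking are set up, the estimate is routine; alternatively, if \cite{CZ09} already states decay of correlations directly for bounded dynamically H\"older observables with a constant of this shape, the proof reduces to citing it and I would simply record that reduction. I would present the argument in the first form to keep the paper self-contained.
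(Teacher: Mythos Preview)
The paper does not prove this proposition at all; it simply recalls it from \cite{CZ09}, noting that it ``was proven in \cite{CZ09} by using the coupling lemma.'' Your closing alternative---citing the reference and recording the reduction---is exactly what the paper does.

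Your self-contained approach via conditional-expectation approximants is a reasonable instinct, but it has a gap as written. The functions $f_k=\IE(f\mid\xi_k^u)$ are piecewise constant on the elements of $\xi_k^u$ and hence discontinuous along the singularity curves $S_{-k}$; they are not H\"older in the Riemannian metric, so you cannot feed them into an off-the-shelf ``exponential decay for H\"older observables'' statement. The correct input for such cylinder-constant functions is precisely the coupling-lemma estimate for standard pairs, which is the same machinery \cite{CZ09} uses to handle dynamically H\"older observables directly. In other words, in this singular setting there is no cheaper ``H\"older'' black box to reduce to: the decay for genuine H\"older functions and the decay for dynamically H\"older functions are both consequences of the same coupling argument, and the latter is actually the more natural formulation. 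So the citation is the right call here, and attempting to make the argument self-contained would amount to reproducing a substantial portion of \cite{CZ09}.
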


We then introduce the following  natural family of $\sigma$-algebras for the system $T: M\to M$.
Recall that $S_{\pm n}$ is the singularity set of $T^{\pm n}$ for $n\ge 1$. 
Let $\xi_0:=\{M\}$ be the trivial partition of $M$,
and denote by $\xi_{\pm n}$ the partition of $M$ into connected components of 
$M\backslash T^{\mp(n-1)} S_{\pm 1}$ for $n\ge 1$.
Further, let 
$$
\xi_m^n:=\xi_m\vee \dots \vee \xi_n
$$
for all $-\infty\le m\le n\le \infty$.
By Assumption (\textbf{H2}), $\xi_0^\infty$ is the partition of $M$ into maximal unstable manifolds,
and $\xi_{-\infty}^0$ is that into maximal stable manifolds. 
Also, $\mu(\partial \xi_m^n)=0$ by Assumption (\textbf{H4}), where
$\partial \xi_m^n$ is the set of boundary curves for components in $\xi^n_m$.

Let $\fF_m^n$ be the Borel $\sigma$-algebra generated by the partition $\xi_m^n$.
Notice that 
$\fF_{-\infty}^\infty$ coincides with the $\sigma$-algebra of all measurable subsets in $M$.
We denote by $\fF:=\{\fF_m^n\}_{-\infty\le m\le n\le \infty}$ the family of those $\sigma$-algebras. 

\begin{proposition}\label{prop: alpha mixing}
 The family $\fF$ is $\alpha$-mixing with an exponential rate, i.e.,
there exist $C_0>0$ and $\vartheta_0\in (0, 1)$ (which are the same as in Proposition \ref{exp decay}) such that 
\beqn
\sup_{k\in \IZ}  \alpha(\fF_{-\infty}^k, \fF_{k+n}^\infty)\le C_0\vartheta_0^n,
\eeqn
where the definition of $\alpha(\cdot, \cdot)$ is given by \eqref{alpha coeff}.
\end{proposition}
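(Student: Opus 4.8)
The plan is to reduce the $\alpha$-mixing estimate for the family $\fF$ to the exponential decay of correlations in Proposition~\ref{exp decay}, via the covariance inequality applied to indicator functions of elements of the relevant $\sigma$-algebras. Fix $k\in\IZ$ and $n\ge 1$, and take $A\in\fF_{-\infty}^k$ and $B\in\fF_{k+n}^\infty$. By definition of the $\alpha$-mixing coefficient we must bound $|\mu(A\cap B)-\mu(A)\mu(B)|$ uniformly in $A,B,k$. Writing this as a correlation $|\IE(\mathbbm{1}_A\,\mathbbm{1}_B)-\IE(\mathbbm{1}_A)\IE(\mathbbm{1}_B)|$, the idea is that $\mathbbm{1}_B$ is constant along stable manifolds of a sufficiently high refinement and $\mathbbm{1}_A$ is constant along unstable manifolds, so after inserting an appropriate power of $T$ these become, up to the dynamics, bounded dynamically H\"older observables to which Proposition~\ref{exp decay} applies with $\vartheta_f=\vartheta_g=0$ (equivalently: the $\cH^\pm$ semi-norms vanish because the functions are locally constant on the relevant curves).

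The key steps, in order. First, translate the partitions under $T$: since $\xi_{-n}$ is the partition into components of $M\setminus T^{n-1}S_{1}$, one has $T^{j}\xi_m^n=\xi_{m+j}^{n+j}$ (taking the natural conventions at $\pm\infty$), so $\fF_{k+n}^\infty = T^{-(k+ \lceil n/2\rceil)}\fF_{\lceil n/2\rceil}^\infty$ and similarly $\fF_{-\infty}^k = T^{-(k-\lfloor n/2\rfloor)}\fF_{-\infty}^{-\lfloor n/2\rfloor}$ — i.e. by replacing $A,B$ with their images under a suitable power of $T$ (which does not change $\mu$-measures, as $\mu$ is $T$-invariant), it suffices to bound $\alpha(\fF_{-\infty}^{-m},\fF_{m}^\infty)$ for $m=\lfloor n/2\rfloor$. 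Second, observe that $\mathbbm{1}_B$ for $B\in\fF_{m}^\infty$ is, after composition, of the form $g\circ T^{m}$ for a function $g$ measurable with respect to $\fF_{0}^\infty$, i.e. constant on unstable manifolds of $T$; hence $g\in\cH^-$ with $|g|^-_{\vartheta}=0$ for every $\vartheta$, and $\|g\|_{L^\infty}\le 1$. Symmetrically, $\mathbbm{1}_A$ for $A\in\fF_{-\infty}^{-m}$ equals $f\circ T^{-m}$ with $f$ measurable w.r.t.\ $\fF_{-\infty}^0$, constant on stable manifolds, so $f\in\cH^+$ with $|f|^+_\vartheta=0$ and $\|f\|_{L^\infty}\le1$. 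Third, apply Proposition~\ref{exp decay} to the pair $(f,g)$ after shifting by $2m$ iterates: $|\IE(f\cdot g\circ T^{2m})-\IE(f)\IE(g)|\le C_0\,\vartheta_0^{\,2m}$, since all the H\"older semi-norm terms in $C_{f,g}$ vanish and $\|f\|_{L^\infty}\|g\|_{L^\infty}\le1$. Unwinding the $T$-translations and using $2m=2\lfloor n/2\rfloor\ge n-1$ gives $\alpha(\fF_{-\infty}^k,\fF_{k+n}^\infty)\le C_0\vartheta_0^{\,n-1}$, which after adjusting $C_0$ by the harmless factor $\vartheta_0^{-1}$ yields the claimed bound (with the same $\vartheta_0$, or any slightly larger base $<1$).

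I expect the main obstacle to be purely bookkeeping rather than conceptual: carefully matching the index conventions for $\xi_m^n$ under iteration of $T$ (the partition $\xi_{-n}$ involves $T^{n-1}S_1$, not $T^nS_1$, so the shift is by $n-1$, and one must be consistent about which endpoint of the interval $[m,n]$ is finite), and confirming that an $\fF_0^\infty$-measurable indicator can genuinely be treated as a (bounded, zero-semi-norm) element of $\cH^-$ even though it is merely measurable and not continuous — here one uses that dynamical H\"older continuity only constrains the function on points of a common \emph{homogeneous} unstable curve, and a function constant on each unstable manifold automatically has zero $|\cdot|^-_\vartheta$. One should also note the $\sup_{k\in\IZ}$ is immediate since the whole argument is translation-invariant in $k$ by $T$-invariance of $\mu$. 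No delicate estimates beyond invoking Proposition~\ref{exp decay} are needed.
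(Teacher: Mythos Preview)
Your overall plan---reduce by $T$-invariance, observe that indicators of sets in $\fF_{-\infty}^k$ and $\fF_{k+n}^\infty$ are bounded with vanishing dynamical H\"older semi-norm, then invoke Proposition~\ref{exp decay}---is exactly the paper's strategy. However, there is a genuine slip in the execution: you have the classes $\cH^+$ and $\cH^-$ reversed. A function that is constant on \emph{unstable} manifolds (i.e.\ $\fF_0^\infty$-measurable) has vanishing semi-norm $|\cdot|^+_\vartheta$, since the supremum defining $|\cdot|^+_\vartheta$ runs over pairs lying on a common unstable curve; hence such a function lies in $\cH^+$, not $\cH^-$. Dually, a function constant on stable manifolds lies in $\cH^-$. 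With the correct assignments, your midpoint decomposition produces $\IE(f\cdot g\circ T^{2m})$ with $f\in\cH^-$ and $g\in\cH^+$, which is the \emph{wrong orientation} for Proposition~\ref{exp decay}, and this is not merely cosmetic: an $\fF_{-\infty}^0$-measurable indicator is typically discontinuous along unstable curves, so it has no finite $\cH^+$ semi-norm at all, and the proposition simply does not apply to it in the $f$-slot.

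The fix is to drop the midpoint split, which buys nothing and causes the misorientation. Shift so that $A\in\fF_{-\infty}^0$ and $B\in\fF_n^\infty$, then write
\[
\mu(A\cap B)=\IE\bigl(\b1_{T^{-n}B}\cdot \b1_A\circ T^n\bigr).
\]
Now $T^{-n}B\in\fF_0^\infty$ is a union of unstable manifolds, so $\b1_{T^{-n}B}\in\cH^+$ with zero semi-norm, while $\b1_A\in\cH^-$ with zero semi-norm; Proposition~\ref{exp decay} applies directly with exponent $n$, giving exactly $C_0\vartheta_0^n$ with no constant adjustment. This is precisely the paper's argument.
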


\begin{proof} By the fact that $T^{-k}\xi_m^n=\xi_{m+k}^{n+k}$ and the invariance of $\mu$, it suffices to show that
\beqn
\alpha(\fF_{-\infty}^0, \fF_{n}^\infty)=\sup_{A\in \fF_{-\infty}^0} \sup_{B\in \fF_{n}^\infty} 
\left|\mu(A\cap B)-\mu(A)\mu(B) \right| \le C_0\vartheta_0^n.
\eeqn
Since $A\in \fF_{-\infty}^0$ is a union of some maximal stable manifolds, we have that $\b1_A\in \cH^-$
such that $\|\b1_A\|_{L^\infty}=1$ and $|\b1_A|_{\vartheta}^-=0$ for any $\vartheta\in (0, 1)$.
Similarly, $B\in \fF_{n}^\infty$ implies that $T^{-n}(B)\in \fF^\infty_0$ 
is a union of some maximal unstable manifolds,
and thus $\b1_{T^{-n}B}\in \cH^+$ such that $\|\b1_{T^{-n}B}\|_{L^\infty}=1$ and
$|\b1_{T^{-n}B}|_{\vartheta}^+=0$ for any $\vartheta\in (0, 1)$.
Therefore, by Proposition~\ref{exp decay}, for any $A\in \fF_{-\infty}^0$ and $B\in \fF_{n}^\infty$, 
\beqn
\left|\mu(A\cap B)-\mu(A)\mu(B) \right|=
\left| \IE(\b1_{T^{-n}B}\cdot \b1_A\circ T^n) -\IE(\b1_{T^{-n}B}) \IE(\b1_A ) \right|
\le C_0\vartheta_0^n.
\eeqn
This completes the proof of Proposition~\ref{prop: alpha mixing}. 
\end{proof}

\subsection{Blocks and approximations}

Let $\bbf=\{f_n\}_{n\ge 0}$ be a sequence of functions satisfying the assumptions of Theorem~\ref{main}. 

From now on, we fix a error exponent $\lambda\in \left(\max\{\frac14, \frac{1}{8\kappa_2}\}, \frac{1}{2}\right)$,
and choose a sufficiently small constant $\eps>0$ such that 

\beq\label{choose eps}
2\eps \kappa_p + \frac{1}{4-\eps} < 2\kappa_2 \lambda,
\ \ \text{and} \ \ \frac{\eps\kappa_p}{\kappa_2}<4\lambda-1.
\eeq

We partition the time interval $[0, \infty)$ into a sequence of consecutive blocks 
$\Delta_j=[\tau_j, \tau_{j +1})$ for $j\ge 1$, where  $\tau_j=\sum_{i=0}^{j-1} \lceil i^\eps\rceil$. 
Note that the block $\Delta_j$ is of length $\lceil j^\eps\rceil$, 
and $\tau_j\asymp j^{1+\eps}$.
For convenience, we set $\tau_0=-1$.

For any $k\in \Delta_j$,
we define the approximated function of $f_k$ by
\beq\label{g_k}
g_k=\IE\left(f_k\left|\fF^{\lceil 0.2 j^\eps\rceil}_{-\lceil 0.2 j^\eps\rceil}\right.\right).
\eeq
It is clear that $\IE(g_k)=0$. 
Since the separation times are adapted to the natural family $\fF$ of $\sigma$-algebras,
we have the following uniform $L^\infty$-bounds on the difference sequence $\{(f_k-g_k)\}_{k\ge 0}$.

\begin{lemma}\label{lem: convergence} 
$
\sup_{k\in \Delta_j} \|f_k-g_k\|_{L^\infty}\ll \vartheta_\bbf^{0.1 j^\eps}.
$
\end{lemma}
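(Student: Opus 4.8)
The plan is to exploit the fact that $g_k$ is a conditional expectation onto a $\sigma$-algebra generated by a partition adapted to the separation time, combined with the dynamical H\"older regularity of $f_k$ supplied by condition (1) of Theorem~\ref{main}. First I would note the elementary contraction property of conditional expectation: for any point $x$,
\beqn
|f_k(x)-g_k(x)| = \left| f_k(x) - \frac{1}{\mu(P(x))}\int_{P(x)} f_k \, d\mu \right|
\le \sup_{y\in P(x)} |f_k(x) - f_k(y)|,
\eeqn
where $P(x)$ denotes the element of the partition $\xi^{\lceil 0.2 j^\eps\rceil}_{-\lceil 0.2 j^\eps\rceil}$ containing $x$ (here one uses $\mu(\partial \xi_m^n)=0$, Assumption \textbf{(H4)}, so that $g_k$ is genuinely a local average). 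The crucial geometric observation is that if $x,y$ lie in the same atom $P(x)$ of $\xi^{N}_{-N}$ with $N=\lceil 0.2 j^\eps\rceil$, then $x$ and $y$ are not separated by $S_N$ in the forward sense nor by $S_{-N}$ in the backward sense; in particular $\bs_+(x,y) > N$ and $\bs_-(x,y) > N$, and moreover $x,y$ lie on a common homogeneous unstable curve (and on a common homogeneous stable curve), so the dynamically H\"older estimates for $f_k$ genuinely apply.

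The main step is then to combine the two ingredients. For $k\in \Delta_j$ and $x,y$ in a common atom of $\xi^N_{-N}$, condition (1) of Theorem~\ref{main} gives
\beqn
|f_k(x)-f_k(y)| \le |f_k|^+_{\vartheta_\bbf}\, \vartheta_\bbf^{\bs_+(x,y)} \ll k^{\beta_\bbf}\, \vartheta_\bbf^{N}.
\eeqn
Since $k\in\Delta_j$ forces $k \le \tau_{j+1} \asymp j^{1+\eps}$, the prefactor $k^{\beta_\bbf}$ is polynomially bounded in $j$, while $\vartheta_\bbf^N = \vartheta_\bbf^{\lceil 0.2 j^\eps\rceil} \le \vartheta_\bbf^{0.2 j^\eps}$ decays stretched-exponentially in $j$. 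As $\vartheta_\bbf<1$, the polynomial factor $j^{(1+\eps)\beta_\bbf}$ is absorbed: for all $j$ large, $j^{(1+\eps)\beta_\bbf}\,\vartheta_\bbf^{0.2 j^\eps} \ll \vartheta_\bbf^{0.1 j^\eps}$, and increasing the implicit constant takes care of the finitely many small $j$. Taking the supremum over $x$, then over $k\in\Delta_j$, yields $\sup_{k\in\Delta_j}\|f_k-g_k\|_{L^\infty} \ll \vartheta_\bbf^{0.1 j^\eps}$, as claimed. One should phrase the argument so that the same atom simultaneously witnesses $\bs_+>N$ for applying $|\cdot|^+_{\vartheta_\bbf}$; the $|\cdot|^-_{\vartheta_\bbf}$ seminorm plays no role for this particular bound, though it is available by symmetry.

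I do not anticipate a serious obstacle here; this is essentially a bookkeeping lemma. The one point requiring a little care is the justification that points in a common atom of $\xi^N_{-N}$ do lie on a common \emph{homogeneous} unstable curve, which is where the hypothesis that the $f_n$ are dynamically H\"older on homogeneous curves (rather than arbitrary unstable curves) is used; this follows from the definition of $\xi^n_m$ via connected components of $M\setminus T^{\mp(n-1)}S_{\pm 1}$ together with the structure of $S_{\pm\infty}$ in Assumption \textbf{(H2)}. The other mild subtlety is the uniformity in $k$ over the block $\Delta_j$, which is immediate once one uses the crude bound $k\le\tau_{j+1}\asymp j^{1+\eps}$ for every $k\in\Delta_j$.
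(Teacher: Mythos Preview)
Your argument has a genuine gap at the ``crucial geometric observation.'' You assert that two points $x,y$ in a common atom of $\xi^N_{-N}$ lie on a common homogeneous unstable curve (and simultaneously on a common homogeneous stable curve). This cannot hold: atoms of $\xi^N_{-N}$ are two-dimensional connected components of $M\setminus(S_N\cup S_{-N})$, not one-dimensional curves, and by transversality an unstable curve and a stable curve meet in at most one point. While it is true that $\bs_+(x,y)>N$ and $\bs_-(x,y)>N$ for such $x,y$, the definition of $|f_k|^+_{\vartheta_\bbf}$ only controls $|f_k(x)-f_k(y)|$ when $x,y$ lie on the \emph{same} homogeneous unstable curve; knowing $\bs_+(x,y)>N$ alone is not enough.

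The paper's proof repairs this by inserting an intermediate point: given $x,y$ in the atom $A$, choose $z\in A$ so that $x,z$ lie on one unstable curve and $y,z$ on one stable curve (the local hyperbolic product structure makes such a $z$ available inside the atom). Then $\bs_+(x,z)>N$ and $\bs_-(y,z)>N$, and one estimates
\[
|f_k(x)-f_k(y)|\le |f_k(x)-f_k(z)|+|f_k(z)-f_k(y)|\le \bigl(|f_k|^+_{\vartheta_\bbf}+|f_k|^-_{\vartheta_\bbf}\bigr)\,\vartheta_\bbf^{N}.
\]
So, contrary to your final remark, the backward seminorm $|\cdot|^-_{\vartheta_\bbf}$ is \emph{essential} here, not optional. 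Once you make this correction, the remainder of your argument (the polynomial-versus-stretched-exponential absorption and the uniformity over $k\in\Delta_j$ via $k\le\tau_{j+1}\asymp j^{1+\eps}$) goes through exactly as you wrote it and matches the paper.
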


\begin{proof} Note that $k\le \tau_{j+1}\asymp j^{1+\eps}$ for any $k\in \Delta_j$.
For any measurable set $A\in \xi^{\lceil 0.2 j^\eps\rceil}_{-\lceil 0.2 j^\eps\rceil}$, 
and any two points $x, y\in A$, 
there is a point $z\in A$ such that $x$ and $z$ belong to one unstable curve, and 
$y$ and $z$ belong to one stable curve. It follows that 
$\bs_+(x, z)>\lceil 0.2 j^\eps\rceil$ and $\bs_-(y, z)>\lceil 0.2 j^\eps\rceil$, and thus 
by Condition (1) of Theorem~\ref{main},
{\allowdisplaybreaks
\begin{eqnarray*}
|f_k(x)-f_k(y)| &\le& |f_k(x)-f_k(z)|+|f_k(y)-f_k(z)| \\
&\le & (|f_k|^+_{\vartheta_\bbf}+|f_k|^-_{\vartheta_\bbf}) \ \vartheta_\bbf^{\lceil 0.2 j^\eps\rceil} \\
&\ll & k^{\beta_\bbf} \vartheta_\bbf^{\lceil 0.2 j^\eps\rceil} 
\le j^{(1+\eps)\beta_\bbf} \vartheta_\bbf^{\lceil 0.2 j^\eps\rceil} \ll \vartheta_\bbf^{0.1 j^\eps}.
\end{eqnarray*}
}
Hence for any $k\in \Delta_j$, we have
\begin{eqnarray*}
|f_k(x)-g_k(x)|
&=&\left|f_k(x)-\frac{1}{\mu(A)} \int_A f_k(y) d\mu(y)\right| \\
&\le &\frac{1}{\mu(A)} \int_A \left|f_k(x)-f_k(y) \right| d\mu(y) 
\ll \ \vartheta_\bbf^{0.1 j^\eps}.
\end{eqnarray*}
The proof of Lemma~\ref{lem: convergence} is complete. 
\end{proof}

For any $n\ge 0$, there is a unique $r(n)\ge 1$ such that $n\in \Delta_{r(n)}$.
Note that $r(n)\asymp n^{\frac{1}{1+\eps}}$.
We now decompose the partial sum of the process $\bX_\bbf=\{X_n\}_{n\ge 0}=
\{f_n\circ T^n\}_{n\ge 0}$
as follows:
\begin{eqnarray}\label{sum decomposition}
\sum_{k=0}^{n-1} X_k &=& 
\sum_{j=1}^{r(n)-1} \left(\sum_{k\in \Delta_j} g_k\circ T^k\right)  
+ \sum_{k=0}^{\tau_{r(n)}-1} (f_k-g_k)\circ T^k  
+\sum_{k=\tau_{r(n)}}^{n-1} X_k  \nonumber \\
&=: & \sum_{j=1}^{r(n)-1} Y_j + U_n+ V_n.
\end{eqnarray}
It turns out that the major contribution for ASIP is given by $\sum_{j=1}^{r(n)-1} Y_j$, 
while the rest terms are negligible. 

\begin{lemma}\label{lem: remainder} 
Let $U_n$ and $V_n$ be given by \eqref{sum decomposition}. Then
\begin{itemize}
\setlength\itemsep{0em}
\item[(i)] $\|U_n\|_{L^p}=\cO(1)$, and $|U_n|=\cO(1)$, a.s..
\item[(ii)]  $\|V_n\|_{L^p}=\cO\left(n^{\eps \kappa_p}\right)$,
and $|V_n|=\cO\left(n^{2\kappa_2\lambda}\right)$,  a.s..
\end{itemize}
\end{lemma}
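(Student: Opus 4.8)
The plan is to estimate the two remainder terms $U_n$ and $V_n$ separately, using the $L^\infty$-approximation bound from Lemma~\ref{lem: convergence} for $U_n$, the Marcinkiewicz--Zygmund-type upper bound \eqref{M-Z} for $V_n$, and the Borel--Cantelli lemma (Lemma~\ref{lem: B-C}) together with the block structure $\tau_j\asymp j^{1+\eps}$, $r(n)\asymp n^{1/(1+\eps)}$ to pass from $L^p$ bounds to almost sure bounds.

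\textbf{Estimate of $U_n$.} Recall $U_n=\sum_{k=0}^{\tau_{r(n)}-1}(f_k-g_k)\circ T^k$, and group the sum over blocks: $U_n=\sum_{j=1}^{r(n)-1}\sum_{k\in\Delta_j}(f_k-g_k)\circ T^k$. By Lemma~\ref{lem: convergence} and $|\Delta_j|=\lceil j^\eps\rceil$, the $j$-th inner sum has $L^\infty$-norm $\ll j^\eps\,\vartheta_\bbf^{0.1 j^\eps}$, which is summable in $j$. Hence $\|U_n\|_{L^\infty}\ll\sum_{j\ge 1} j^\eps\vartheta_\bbf^{0.1 j^\eps}=\cO(1)$ uniformly in $n$; in particular $\|U_n\|_{L^p}=\cO(1)$ and the pointwise bound $|U_n|=\cO(1)$ a.s.\ is immediate since $\sup_n\|U_n\|_{L^\infty}<\infty$. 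This step is essentially routine.

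\textbf{Estimate of $V_n$.} Here $V_n=\sum_{k=\tau_{r(n)}}^{n-1} X_k$ is a partial sum of at most $|\Delta_{r(n)}|=\lceil r(n)^\eps\rceil\ll n^{\eps/(1+\eps)}\le n^\eps$ consecutive terms of $\bX_\bbf$, starting at time $m=\tau_{r(n)}$. Applying the second inequality in \eqref{M-Z} with this block length gives $\|V_n\|_{L^p}\ll (n^\eps)^{\kappa_p}=n^{\eps\kappa_p}$, which is the first claimed bound. For the almost sure bound, the idea is to control $\max_{n\in\Delta_r}|V_n|$ over each block $\Delta_r$ and sum the tail probabilities. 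Since for $n\in\Delta_r$ the term $V_n$ is a sum of consecutive $X_k$'s with both endpoints in $\Delta_r$, one can bound $\max_{n\in\Delta_r}|V_n|\le 2\max_{\tau_r\le m\le n<\tau_{r+1}}\big|\sum_{k=m}^{n-1}X_k\big|$, and then either invoke a Doob/Móricz-type maximal inequality for the $L^p$-norm of such maxima (at the cost of a logarithmic or power-of-block-length factor, harmless since $|\Delta_r|\ll r^\eps$ and $p>4$), or, more elementarily, bound the maximum by the sum $\sum$ of the $|X_k|$ over the block together with the triangle inequality; the cleanest route is to use that $V_n$ itself, for each fixed $n$, already satisfies $\|V_n\|_{L^p}\ll n^{\eps\kappa_p}$ and apply Markov's inequality along a suitable subsequence of indices $n$, one per block. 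Concretely, by Markov, $\mu\big(|V_n|>n^{2\kappa_2\lambda}\big)\ll n^{p(\eps\kappa_p-2\kappa_2\lambda)}$; one checks from the second inequality in \eqref{choose eps}, namely $\eps\kappa_p/\kappa_2<4\lambda-1$, that $\eps\kappa_p<2\kappa_2\lambda$ (using $\lambda>\frac14$, so $4\lambda-1<2\lambda$ fails in general---so here one must be slightly careful) and more importantly that $p(2\kappa_2\lambda-\eps\kappa_p)>1$ for $p>4$ after possibly shrinking $\eps$; summing over one representative $n$ per block (there are $\asymp r$ values of $n$ giving a block index comparable to $r$, but the bound $n^{2\kappa_2\lambda}$ is increasing, so it suffices to union-bound over blocks) yields a convergent series, and Lemma~\ref{lem: B-C} gives $|V_n|=\cO(n^{2\kappa_2\lambda})$ a.s.

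\textbf{Main obstacle.} The delicate point is the passage from the $L^p$ bound $\|V_n\|_{L^p}\ll n^{\eps\kappa_p}$ to the almost sure bound $|V_n|=\cO(n^{2\kappa_2\lambda})$: one needs a genuine summability of tail probabilities over \emph{all} $n$, not just along a sparse subsequence, which forces either a maximal inequality over each block $\Delta_r$ or an extra appeal to the fact that within a block the variation of $V_n$ is itself small in $L^p$. Verifying that the exponents line up---that is, that the first inequality in \eqref{choose eps}, $2\eps\kappa_p+\frac{1}{4-\eps}<2\kappa_2\lambda$, together with $p>4$ forces $p(2\kappa_2\lambda-\eps\kappa_p)>1$ so that $\sum_n n^{p(\eps\kappa_p-2\kappa_2\lambda)}<\infty$ even after accounting for the $\asymp r^{\eps}$ indices $n$ per block---is the real content, and is exactly where the choice of $\eps$ in \eqref{choose eps} is used. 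Everything else is bookkeeping with the block lengths $\tau_j\asymp j^{1+\eps}$ and $r(n)\asymp n^{1/(1+\eps)}$.
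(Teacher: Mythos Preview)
Your approach is essentially the same as the paper's, and part (i) is even slightly cleaner: since Lemma~\ref{lem: convergence} gives an $L^\infty$ bound on $f_k-g_k$, you get $\sup_n\|U_n\|_{L^\infty}<\infty$ directly, whereas the paper passes through $L^p$ and then argues a.s.\ summability.

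For part (ii), however, you are making the almost sure bound much harder than it is. No maximal inequality, no subsequence argument, and no ``one representative per block'' trick is needed: the paper simply applies Markov's inequality at \emph{every} $n$ and sums. From the \emph{first} inequality in \eqref{choose eps} (not the second, which is what you tried to use) one has $2\kappa_2\lambda-\eps\kappa_p>\eps\kappa_p+\tfrac{1}{4-\eps}>\tfrac{1}{4-\eps}$, hence
\[
\mu\{|V_n|\ge n^{2\kappa_2\lambda}\}\le n^{-2p\kappa_2\lambda}\,\IE|V_n|^p\ll n^{p(\eps\kappa_p-2\kappa_2\lambda)}\ll n^{-p/(4-\eps)},
\]
and since $p>4>4-\eps$ the exponent $p/(4-\eps)$ exceeds $1$, so $\sum_n \mu\{|V_n|\ge n^{2\kappa_2\lambda}\}<\infty$ outright. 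Borel--Cantelli then finishes. Your ``main obstacle'' is therefore a phantom: the choice of $\eps$ in \eqref{choose eps} was made precisely so that this direct summation works, and there is no need to shrink $\eps$ further or to control maxima within blocks.
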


\begin{proof} (i) Note that $U_n=\sum_{j=1}^{r(n)-1} \sum_{k\in \Delta_j}  (f_k-g_k)\circ T^k$. 
By Lemma~\ref{lem: convergence} and Minkowski's inequality, we have
\begin{eqnarray*}
\|U_n\|_{L^p} 
\le  \sum_{j=1}^\infty \sum_{k\in \Delta_j} \| (f_k-g_k)\circ T^k \|_{L^p} 
&\le & \sum_{j=1}^\infty \sum_{k\in \Delta_j} \| f_k-g_k \|_{L^p} \\
&\ll & \sum_{j=1}^\infty \lceil j^\eps \rceil \ \vartheta_\bbf^{0.1 j^\eps} < \infty.
\end{eqnarray*}
Moreover, $\sum_{j=1}^\infty \sum_{k\in \Delta_j} \| (f_k-g_k)\circ T^k \|_{L^p} <\infty$ implies that
$\sum_{j=1}^\infty \sum_{k\in \Delta_j} | (f_k-g_k)\circ T^k |<\infty$ a.s.,
and thus $|U_n|=\cO(1)$ a.s..

(ii) By \eqref{M-Z}, we obtain
\beqn
\|V_n\|_{L^p}=\left\|\sum_{k=\tau_{r(n)}}^{n-1} X_k \right\|_{L^p}
\ll \left(n-\tau_{r(n)}\right)^{\kappa_p} \ll \left(r(n)^\eps\right)^{\kappa_p}
\ll n^{\eps \kappa_p}.
\eeqn
Moreover, by  Markov's inequality and \eqref{choose eps}, 
\begin{eqnarray*}
\mu\{ |V_n|\ge n^{2 \kappa_2 \lambda} \}\le n^{-2p\kappa_2\lambda}\ \IE|V_n|^p
\ll n^{p(-2\kappa_2\lambda + \eps \kappa_p)} \ll n^{-\frac{p}{4-\eps}},
\end{eqnarray*}
and hence $\sum_{n=1}^\infty \mu\{ |V_n|\ge n^{2 \kappa_2 \lambda} \}<\infty$.
By the Borel-Cantelli lemma (Lemma~\ref{lem: B-C}), 
we get
$
\mu\left( \bigcap_{k=1}^\infty \bigcup_{n\ge k} \left\{ |V_n|\ge n^{2 \kappa_2 \lambda} \right\} \right)=0.
$
In other words,
$|V_n|\ll n^{2\kappa_2\lambda}$, a.s.. 
\end{proof}

\subsection{Martingale representation for $\{Y_j\}_{j\ge 1}$ }

In this subsection, we introduce a martingale representation for 
the random process $\{Y_j\}_{j\ge 1}$ as defined in \eqref{sum decomposition}. 
Such representation is given by Lemma 7.4.1 in \cite{PhSt75}, 
but has better norm estimates in our context. 

We first establish the following preliminary estimates for $Y_j$.

\begin{lemma}\label{lem: est Y} 
For any $j\ge 1$,  the random variable $Y_j$ is $\fF_{\tau_{j-1}}^{\tau_{j+2}}$-measurable
such that $\IE Y_j=0$ and $\|Y_j\|_{L^p}\ll j^{\eps \kappa_p}$.
Furthermore, $\|\sum_{j=1}^r Y_j\|_{L^2}\gg r^{\kappa_2}$.
\end{lemma}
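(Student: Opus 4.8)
The statement bundles three assertions about $Y_j=\sum_{k\in\Delta_j}g_k\circ T^k$: measurability with respect to $\fF_{\tau_{j-1}}^{\tau_{j+2}}$, mean zero, the $L^p$ upper bound, and the $L^2$ lower bound for partial sums. The first three are quick; the fourth is the main point. Recall from \eqref{g_k} that for $k\in\Delta_j$ the function $g_k$ is $\fF^{\lceil 0.2j^\eps\rceil}_{-\lceil 0.2j^\eps\rceil}$-measurable, hence $g_k\circ T^k$ is $\fF^{k+\lceil 0.2j^\eps\rceil}_{k-\lceil 0.2j^\eps\rceil}$-measurable. For $k\in\Delta_j=[\tau_j,\tau_{j+1})$ one checks $k-\lceil 0.2j^\eps\rceil\ge\tau_j-\lceil j^\eps\rceil\ge\tau_{j-1}$ and $k+\lceil 0.2j^\eps\rceil<\tau_{j+1}+\lceil j^\eps\rceil\le\tau_{j+2}$ (using that block lengths $\lceil i^\eps\rceil$ are non-decreasing), so each summand, and therefore $Y_j$, is $\fF_{\tau_{j-1}}^{\tau_{j+2}}$-measurable. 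Mean zero is immediate since $\IE(g_k)=0$ and $\mu$ is $T$-invariant. For the $L^p$ bound, write $Y_j=\sum_{k\in\Delta_j}(f_k-g_k)\circ T^k+\sum_{k\in\Delta_j}X_k$; the first sum has $L^p$-norm $\ll\lceil j^\eps\rceil\,\vartheta_\bbf^{0.1j^\eps}=o(1)$ by Lemma~\ref{lem: convergence} and Minkowski, while the second has $L^p$-norm $\ll(\lceil j^\eps\rceil)^{\kappa_p}\ll j^{\eps\kappa_p}$ by \eqref{M-Z}; together this gives $\|Y_j\|_{L^p}\ll j^{\eps\kappa_p}$.

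\textbf{The lower bound.} The heart of the lemma is $\|\sum_{j=1}^r Y_j\|_{L^2}\gg r^{\kappa_2}$. The strategy is to compare $\sum_{j=1}^r Y_j$ with the genuine partial sum $\sum_{k=0}^{\tau_{r+1}-1}X_k$, whose $L^2$-norm is $\sigma_{\tau_{r+1}}\gg\tau_{r+1}^{\kappa_2}\asymp r^{(1+\eps)\kappa_2}\ge r^{\kappa_2}$ by \eqref{M-Z} and $\tau_{r+1}\asymp r^{1+\eps}$. From the decomposition \eqref{sum decomposition} applied at $n=\tau_{r+1}$ (for which $r(n)=r+1$, $U_n=\sum_{k=0}^{\tau_{r+1}-1}(f_k-g_k)\circ T^k$, and $V_n=0$), we have
\beqn
\sum_{j=1}^{r} Y_j=\sum_{k=0}^{\tau_{r+1}-1}X_k-U_{\tau_{r+1}},
\eeqn
so by the triangle inequality $\|\sum_{j=1}^r Y_j\|_{L^2}\ge\sigma_{\tau_{r+1}}-\|U_{\tau_{r+1}}\|_{L^2}$. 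By Lemma~\ref{lem: remainder}(i), $\|U_{\tau_{r+1}}\|_{L^p}=\cO(1)$, hence $\|U_{\tau_{r+1}}\|_{L^2}=\cO(1)$ as well. Therefore $\|\sum_{j=1}^r Y_j\|_{L^2}\ge c\,r^{(1+\eps)\kappa_2}-\cO(1)\gg r^{\kappa_2}$ for all large $r$, and the finitely many small $r$ are handled trivially (or the implied constant absorbed).

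\textbf{Main obstacle.} The only subtlety is making sure the bookkeeping in the block indices is airtight — that $r(\tau_{r+1})$ really equals $r+1$, that $V_{\tau_{r+1}}$ vanishes, and that the window $[k-\lceil 0.2j^\eps\rceil,\,k+\lceil 0.2j^\eps\rceil]$ stays inside $[\tau_{j-1},\tau_{j+2}]$ uniformly in $k\in\Delta_j$; all of these rest on the monotonicity of $j\mapsto\lceil j^\eps\rceil$ and on $\tau_j=\sum_{i<j}\lceil i^\eps\rceil$, and require no real estimate, just care. Everything else reduces to the a priori bounds \eqref{M-Z}, Lemma~\ref{lem: convergence}, and Lemma~\ref{lem: remainder}, which are already available.
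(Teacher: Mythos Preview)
Your proof is correct and follows the paper's argument essentially verbatim; the only cosmetic difference is that for the $L^2$ lower bound you package the error as $U_{\tau_{r+1}}$ and invoke Lemma~\ref{lem: remainder}(i), whereas the paper re-derives that $\cO(1)$ bound inline from Lemma~\ref{lem: convergence}. Two small slips to clean up: in the $L^p$ estimate your decomposition should read $Y_j=-\sum_{k\in\Delta_j}(f_k-g_k)\circ T^k+\sum_{k\in\Delta_j}X_k$ (the sign is harmless for the norm), and in the measurability check the chain $\tau_j-\lceil j^\eps\rceil\ge\tau_{j-1}$ is backwards (since $\tau_j-\tau_{j-1}=\lceil(j-1)^\eps\rceil\le\lceil j^\eps\rceil$); the correct route is $k-\lceil 0.2j^\eps\rceil\ge\tau_j-\lceil 0.2j^\eps\rceil\ge\tau_{j-1}$ via $\lceil 0.2j^\eps\rceil\le\lceil(j-1)^\eps\rceil$ for $j\ge2$, with $j=1$ checked directly using $\tau_0=-1$.
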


\begin{proof}
By \eqref{g_k} and \eqref{sum decomposition}, we have for any $j\ge 1$,
\beqn
Y_j=\sum_{k\in \Delta_j} g_k\circ T^k
=\sum_{k\in \Delta_j} \IE\left(f_k\left|\fF^{\lceil 0.2 j^\eps\rceil}_{-\lceil 0.2 j^\eps\rceil}\right.\right)\circ T^k
=\sum_{k\in \Delta_j} \IE\left(f_k\circ T^k\left|\fF^{k+\lceil 0.2 j^\eps\rceil}_{k-\lceil 0.2 j^\eps\rceil}\right.\right)
\eeqn
is $\fF_{\tau_j-\lceil 0.2 j^\eps\rceil}^{\tau_{j+1}+\lceil 0.2 j^\eps\rceil}$-measurable,
and thus $\fF_{\tau_{j-1}}^{\tau_{j+2}}$-measurable. 
It is clear that $\IE Y_j=0$ since each $f_k$ is of zero mean.
Moreover, by Lemma~\ref{lem: convergence},
\beqn
\left\| Y_j-\sum_{k\in \Delta_j} X_k\right\|_{L^\infty} 
\le \sum_{k\in \Delta_j} \left\| f_k-g_k\right\|_{L^\infty}
\ll \lceil j^\eps\rceil  \vartheta_\bbf^{0.1 j^\eps} 
\le \sup_{j\ge 1} \lceil j^\eps\rceil  \vartheta_\bbf^{0.1 j^\eps} <\infty.
\eeqn
Therefore, by \eqref{M-Z}, 
\begin{eqnarray*}
\|Y_j\|_{L^p}
\le  \left\|\sum_{k\in \Delta_j} X_k\right\|_{L^p} 
+ \left\| Y_j-\sum_{k\in \Delta_j} X_k\right\|_{L^\infty}  
\ll   \lceil j^\eps\rceil^{\kappa_p} +\cO(1) \ll j^{\eps \kappa_p}. 
\end{eqnarray*}
Furthermore, 
\begin{eqnarray*}
\left\|\sum_{j=1}^r Y_j \right\|_{L^2}
&\ge & \left\|\sum_{k=0}^{\tau_{r+1}-1} X_k\right\|_{L^2}
-\sum_{j=1}^r \left\| Y_j-\sum_{k\in \Delta_j} X_k\right\|_{L^\infty} \\
&\gg & \tau_{r+1}^{\kappa_2} -  \sum_{j=1}^\infty \lceil j^\eps\rceil  \vartheta_\bbf^{0.1 j^\eps} \\
&\gg & (r+1)^{\kappa_2(1+\eps)}- \cO(1)\gg r^{\kappa_2}.
\end{eqnarray*}
\end{proof}

Now we denote $\fG_j$ the $\sigma$-algebra generated by $Y_1, Y_2, \dots, Y_j$,
and it is immediate from Lemma \ref{lem: est Y} that $\fG_j\subset \fF_{-1}^{\tau_{j+2}}$. 
We also set $\fG_0:=\{\emptyset, M\}$ to be the trivial $\sigma$-algebra.
 
\begin{lemma}\label{lem: martingale repn} 
For any $j\ge 1$, we set $\xi_j:=Y_j-u_j+u_{j+1}$, where $u_j\in L^{4}$ is given by 
\beq\label{def uj}
u_j:=\sum_{k=0}^\infty \IE\left(Y_{j+k}\left| \fG_{j-1}\right. \right).
\eeq
Then $\{(\xi_j, \fG_j)\}_{j\ge 1}$ is a martingale difference sequence.
Moreover, $\IE u_j=\IE \xi_j=0$ and 
\beqn
\|u_j\|_{L^{4}}\ll j^{\eps \kappa_p}, \ \ \text{and} \ \ \|\xi_j\|_{L^{4}}\ll j^{\eps \kappa_p}.
\eeqn
\end{lemma}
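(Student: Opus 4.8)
\textbf{Proof plan for Lemma~\ref{lem: martingale repn}.}

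The plan is to follow the classical martingale coboundary construction (Gordin's method, as in Lemma 7.4.1 of \cite{PhSt75}), but to track the $L^4$-norms carefully using the exponential $\alpha$-mixing of the family $\fF$ (Proposition~\ref{prop: alpha mixing}) together with the block-length growth $|\Delta_j|\asymp j^\eps$ and the moment bound $\|Y_j\|_{L^p}\ll j^{\eps\kappa_p}$ from Lemma~\ref{lem: est Y}. The first step is to show that the series defining $u_j$ in \eqref{def uj} converges in $L^4$. The key point is that $Y_{j+k}$ is $\fF_{\tau_{j+k-1}}^{\tau_{j+k+2}}$-measurable while $\fG_{j-1}\subset \fF_{-1}^{\tau_{j+1}}$, so for $k\ge 3$ there is a genuine gap of roughly $\tau_{j+k-1}-\tau_{j+1}\asymp \sum_{i=j+1}^{j+k-2}i^\eps$ singularity-generations between the two $\sigma$-algebras. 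Applying Lemma~\ref{lem: cov inequality} (or more directly the $\alpha$-mixing estimate for conditional expectations: $\|\IE(Z|\fG)-\IE Z\|_{L^q}\ll \alpha^{1/q-1/q'}\|Z\|_{L^{q'}}$ for $q<q'$, taking $q=4$ and $q'=p>4$) together with $\IE Y_{j+k}=0$, we obtain
\beqn
\left\|\IE\left(Y_{j+k}\left|\fG_{j-1}\right.\right)\right\|_{L^4}\ll \vartheta_0^{c\sum_{i=j+1}^{j+k-2}i^\eps}\,\|Y_{j+k}\|_{L^p}\ll \vartheta_0^{c\,(k-3)\,j^\eps}\,(j+k)^{\eps\kappa_p}
\eeqn
for $k\ge 3$, while the finitely many terms $k=0,1,2$ are bounded by $\|Y_{j+k}\|_{L^4}\ll (j+k)^{\eps\kappa_p}\ll j^{\eps\kappa_p}$ via Lemma~\ref{lem: est Y}. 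Summing the geometric-type tail (the exponential decay in $k$ beats the polynomial factor $(j+k)^{\eps\kappa_p}$, and for fixed exponent the sum over $k$ is $\ll j^{\eps\kappa_p}$ uniformly) yields $\|u_j\|_{L^4}\ll j^{\eps\kappa_p}$, and in particular $u_j\in L^4$ with $\IE u_j=0$ (each summand has zero mean). Then $\|\xi_j\|_{L^4}\le \|Y_j\|_{L^4}+\|u_j\|_{L^4}+\|u_{j+1}\|_{L^4}\ll j^{\eps\kappa_p}$ by Minkowski, and $\IE\xi_j=0$.

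The second step is the martingale difference property. By construction $\xi_j=Y_j-u_j+u_{j+1}$; one checks $\xi_j$ is $\fG_j$-measurable since $Y_j$ and $u_{j+1}=\sum_{k\ge0}\IE(Y_{j+1+k}|\fG_j)$ are $\fG_j$-measurable, and $u_j$ is $\fG_{j-1}$-measurable hence $\fG_j$-measurable. For the defining identity $\IE(\xi_j|\fG_{j-1})=0$ (for $j\ge2$), use the telescoping structure: by the tower property,
\beqn
\IE(u_{j+1}|\fG_{j-1})=\sum_{k=0}^\infty \IE\left(\IE(Y_{j+1+k}|\fG_j)\,\Big|\,\fG_{j-1}\right)=\sum_{k=0}^\infty \IE(Y_{j+1+k}|\fG_{j-1})=\sum_{m=1}^\infty \IE(Y_{j+m}|\fG_{j-1})=u_j-\IE(Y_j|\fG_{j-1}),
\eeqn
so $\IE(\xi_j|\fG_{j-1})=\IE(Y_j|\fG_{j-1})-u_j+\IE(u_{j+1}|\fG_{j-1})=\IE(Y_j|\fG_{j-1})-u_j+u_j-\IE(Y_j|\fG_{j-1})=0$. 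This verifies condition (3) of Definition~\ref{def mds}, and conditions (1), (2) are immediate ($\fG_j$ is increasing, $\xi_j\in L^4\subset L^1$ and is $\fG_j$-measurable).

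The main obstacle is the first step: justifying that the $\alpha$-mixing coefficient between $\fG_{j-1}$ and the $\sigma$-algebra carrying $Y_{j+k}$ decays fast enough to make the series converge \emph{and} to keep $\|u_j\|_{L^4}$ within the same polynomial order $j^{\eps\kappa_p}$ rather than picking up extra factors. This hinges on two things: that $\fG_{j-1}\subset \fF_{-1}^{\tau_{j+1}}$ (which requires Lemma~\ref{lem: est Y}, giving that each $Y_i$ for $i\le j-1$ is $\fF_{-1}^{\tau_{i+2}}\subset\fF_{-1}^{\tau_{j+1}}$-measurable), and that the covariance/conditional-expectation inequality is applied with the $L^p$-norm ($p>4$) on $Y_{j+k}$ — the strict inequality $p>4$ is exactly what provides the positive exponent $\tfrac14-\tfrac1p$ on the mixing coefficient, which combined with the super-linear growth $\sum_{i\le j+k}i^\eps\gtrsim k\,j^\eps$ of the gap gives summability uniformly in $j$. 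One must also be slightly careful that the conditioning $\sigma$-algebra $\fG_{j-1}$ is not exactly one of the $\fF_m^n$ but is \emph{contained} in one; since $\alpha$-mixing coefficients are monotone under shrinking the $\sigma$-algebras, $\alpha(\fG_{j-1},\fF_{\tau_{j+k-1}}^{\infty})\le\alpha(\fF_{-\infty}^{\tau_{j+1}},\fF_{\tau_{j+k-1}}^{\infty})\le C_0\vartheta_0^{\tau_{j+k-1}-\tau_{j+1}}$, so Proposition~\ref{prop: alpha mixing} applies directly.
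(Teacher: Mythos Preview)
Your proposal is correct and follows essentially the same approach as the paper. Both arguments use the measurability $Y_{j+k}\in\fF_{\tau_{j+k-1}}^{\tau_{j+k+2}}$, the inclusion $\fG_{j-1}\subset\fF_{-1}^{\tau_{j+1}}$, and the exponential $\alpha$-mixing of Proposition~\ref{prop: alpha mixing} to control $\|\IE(Y_{j+k}\mid\fG_{j-1})\|_{L^4}$; the only cosmetic differences are that the paper derives the conditional-expectation bound from Lemma~\ref{lem: cov inequality} via the duality trick $\IE|v_{jk}|^4=\IE(Y_{j+k}\,v_{jk}^3)$ (yielding the same exponent $1/s_3=1/4-1/p$ you cite directly), estimates the gap by the single term $\lceil(j+k-2)^\eps\rceil$ rather than your sum $\sum_{i=j+1}^{j+k-2}i^\eps$, and leaves the telescoping verification of $\IE(\xi_j\mid\fG_{j-1})=0$ as ``easy to check'' whereas you spell it out.
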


\begin{proof} We first show that each $u_j$, given by \eqref{def uj}, is well-defined as an $L^{4}$ function.
Denote for short $v_{jk}:=\IE\left(Y_{j+k}\left| \fG_{j-1}\right. \right)$, which is $\fG_{j-1}$-measurable.
Then
\begin{eqnarray}\label{Ev_jk}
\IE|v_{jk}|^{4}=\IE\left( v_{jk} \cdot v_{jk}^3 \right) 
= \IE\left( \IE\left(Y_{j+k}\left| \fG_{j-1}\right. \right) \cdot v_{jk}^3 \right) 
= \IE\left( Y_{j+k} \cdot v_{jk}^3 \right).
\end{eqnarray}
By Lemma~\ref{lem: est Y}, $Y_{j+k}$ is $\fF^{\tau_{j+k+2}}_{\tau_{j+k-1}}$-measurable, 
and also $\fG_{j-1}\subset \fF_{-1}^{\tau_{j+1}}$.
We choose $s_1=p$, $s_2=\frac{4}{3}$ and $s_3=\frac{4p}{p-4}$, 
and apply Lemma~\ref{lem: cov inequality} to the last term of \eqref{Ev_jk}, 
\begin{eqnarray*}
\IE|v_{jk}|^{4}
&\le & 10   \alpha(\fF^{\tau_{j+k+2}}_{\tau_{j+k-1}}, \fF_{-1}^{\tau_{j+1}})^{\frac{1}{s_3}} 
\left\|Y_{j+k} \right\|_{L^{s_1}} \left\| v_{jk}^3 \right\|_{L^{s_2}} \\
&=& 10   \alpha(\fF^{\tau_{j+k+2}}_{\tau_{j+k-1}}, \fF_{-1}^{\tau_{j+1}})^{\frac{1}{s_3}} 
\left\|Y_{j+k} \right\|_{L^{p}} \left[\IE|v_{jk}|^{4}\right]^{\frac{3}{4}}.
\end{eqnarray*}
Dividing $\left[\IE|v_{jk}|^{4}\right]^{\frac{3}{4}}$ on both sides, 
and then using Proposition~\ref{prop: alpha mixing} and Lemma~\ref{lem: est Y},
we have that for any $j\ge 1$, 
\begin{eqnarray*}
\|v_{jk}\|_{L^{4}}
&\le & 10  \alpha(\fF^{\infty}_{\tau_{j+k-1}}, \fF_{-\infty}^{\tau_{j+1}})^{\frac{1}{s_3}} \left\|Y_{j+k} \right\|_{L^{p}} \\
&\ll & 
\begin{cases}
\left( j+k \right)^{\eps \kappa_p}, \ & \ 0\le k<3, \\
\left( j+k \right)^{\eps \kappa_p} \vartheta_0^{\frac{\lceil (j+k-2)^\eps \rceil}{s_3}}, \ & \ k\ge 3,
\end{cases}\\
&\ll & 
\begin{cases}
j^{\eps \kappa_p}, \ & \ 0\le k<3, \\
\vartheta_0^{\frac{(k-2)^\eps}{2s_3}}, \ & \ k\ge 3.
\end{cases}
\end{eqnarray*}
Therefore, for any $j\ge 1$, 
\beqn
\sum_{k=0}^\infty \left\|v_{jk}\right\|_{L^{4}}=
\sum_{k=0}^2 \left\|v_{jk}\right\|_{L^{4}} + \sum_{k=3}^\infty \left\|v_{jk}\right\|_{L^{4}}
\ll 3 j^{\eps \kappa_p} + \sum_{k=3}^\infty \vartheta_0^{\frac{(k-2)^\eps}{2s_3}}
\ll j^{\eps \kappa_p},
\eeqn
which implies that $u_j=\sum_{k=0}^\infty v_{jk}$ is well-defined in $L^{4}$, 
and $\|u_j\|_{L^{4}}\ll j^{\eps \kappa_p}$.

By the formula $\xi_j:=Y_j-u_j+u_{j+1}$, 
it is easy to check that $\{(\xi_j, \fG_j)\}_{j\ge 1}$ is a martingale difference sequence (see Definition~\ref{def mds}).
Moreover,
\beqn
\|\xi_j\|_{L^{4}}\le \|Y_j\|_{L^{p}} +\|u_j\|_{L^{4}}+\|u_{j+1}\|_{L^{4}} \ll j^{\eps \kappa_p}. 
\eeqn
The proof of Lemma~\ref{lem: martingale repn} is complete. 
\end{proof}

The following lemma shows that $\sum_{j=1}^{r} Y_j$ is well approximated by
$\sum_{j=1}^{r} \xi_j$.

 \begin{lemma}\label{lem: xi remainder} We have the following estimates:
\beqn
\left\|\sum_{j=1}^{r} \left(Y_j-\xi_j \right) \right\|_{L^4}=\cO\left(r^{\eps\kappa_p}\right),
\ \ \text{and} \ \
\left|\sum_{j=1}^{r} \left(Y_j-\xi_j \right) \right|=\cO\left(r^{2\kappa_2\lambda}\right), \ a.s..
\eeqn
\end{lemma}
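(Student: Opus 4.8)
The plan is to exploit the telescoping structure of $\xi_j := Y_j - u_j + u_{j+1}$, which gives immediately
\beqn
\sum_{j=1}^{r} (Y_j - \xi_j) = \sum_{j=1}^{r} (u_j - u_{j+1}) = u_1 - u_{r+1},
\eeqn
so both estimates reduce to controlling the single term $u_{r+1}$ (the term $u_1$ being a fixed $L^4$ random variable, hence $\cO(1)$ in both the $L^4$-norm and almost sure sense). For the $L^4$-bound this is immediate from Lemma~\ref{lem: martingale repn}: $\|u_{r+1}\|_{L^4} \ll (r+1)^{\eps\kappa_p} \ll r^{\eps\kappa_p}$, and since $\eps\kappa_p \le \kappa_p$ the combination $\|u_1 - u_{r+1}\|_{L^4} = \cO(r^{\eps\kappa_p})$ follows by the triangle inequality.

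For the almost sure estimate, I would pass from the moment bound to a pointwise bound along the full sequence by a Borel--Cantelli argument, exactly as in the proof of Lemma~\ref{lem: remainder}(ii). By Markov's inequality,
\beqn
\mu\{|u_{r+1}| \ge r^{2\kappa_2\lambda}\} \le r^{-8\kappa_2\lambda}\, \IE|u_{r+1}|^4 \ll r^{4\eps\kappa_p - 8\kappa_2\lambda},
\eeqn
and the second condition in \eqref{choose eps}, namely $\eps\kappa_p/\kappa_2 < 4\lambda - 1$, gives $4\eps\kappa_p < 16\kappa_2\lambda - 4\kappa_2 < 8\kappa_2\lambda$ (using $\kappa_2 > \tfrac14$ and $\lambda > \tfrac14$, so that $8\kappa_2\lambda > 8\cdot\tfrac14\cdot\tfrac14 = \tfrac12$ and, more to the point, $8\kappa_2\lambda - 4\kappa_2 = 4\kappa_2(2\lambda-1) $ — one checks the exponent is strictly less than $-1$). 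Concretely, $4\eps\kappa_p - 8\kappa_2\lambda < 4\kappa_2(4\lambda-1) - 8\kappa_2\lambda = 4\kappa_2(2\lambda - 1) < 0$; a small further check (using $\lambda$ bounded away from $\tfrac14$ as guaranteed by $\lambda > \max\{\tfrac14,\tfrac1{8\kappa_2}\}$, equivalently $2\kappa_2\lambda > \tfrac14$) confirms the exponent is below $-1$, so that $\sum_r \mu\{|u_{r+1}| \ge r^{2\kappa_2\lambda}\} < \infty$. Lemma~\ref{lem: B-C} then yields $|u_{r+1}| \ll r^{2\kappa_2\lambda}$ a.s., and combined with $|u_1| = \cO(1) = \cO(r^{2\kappa_2\lambda})$ (since $2\kappa_2\lambda > 0$) this gives $|\sum_{j=1}^r (Y_j - \xi_j)| = \cO(r^{2\kappa_2\lambda})$ a.s.

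The main obstacle is purely bookkeeping: verifying that the exponent $4\eps\kappa_p - 8\kappa_2\lambda$ is genuinely less than $-1$ (not merely negative), so that the Borel--Cantelli series converges. This requires combining both inequalities in \eqref{choose eps} together with the lower bounds $\kappa_2 > \tfrac14$ and $\lambda > \tfrac1{8\kappa_2}$; the latter is exactly what forces $2\kappa_2\lambda > \tfrac14$ and hence $8\kappa_2\lambda > 1$, leaving enough room after subtracting the positive quantity $4\eps\kappa_p$ (which \eqref{choose eps} keeps small relative to $\kappa_2$). Everything else — the telescoping identity, the $L^4$-estimate, and the reduction of the a.s.\ statement to a single term — is routine given Lemma~\ref{lem: martingale repn}.
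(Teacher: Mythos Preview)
Your approach is exactly the paper's: telescope to $u_1 - u_{r+1}$, invoke Lemma~\ref{lem: martingale repn} for the $L^4$-bound, then Markov plus Borel--Cantelli for the a.s.\ bound. The only flaw is in your verification that the Markov exponent $4\eps\kappa_p - 8\kappa_2\lambda$ lies strictly below $-1$. The upper bound you actually compute, using the second inequality in \eqref{choose eps} alone, is $4\kappa_2(2\lambda-1)$; this is merely negative, not $<-1$. For instance, with $\kappa_2 = 0.3$ and $\lambda = 0.49$ (which is admissible since $\lambda>1/(8\kappa_2)\approx 0.417$) it equals about $-0.024$. Your subsequent appeal to $8\kappa_2\lambda>1$ gives only $4\eps\kappa_p - 8\kappa_2\lambda < 4\eps\kappa_p - 1$, which again need not be $<-1$.

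The paper sidesteps this by using the \emph{first} inequality in \eqref{choose eps}: since $\eps\kappa_p\ge 0$, that inequality gives
\[
\eps\kappa_p - 2\kappa_2\lambda \le 2\eps\kappa_p - 2\kappa_2\lambda < -\tfrac{1}{4-\eps},
\]
hence $4(\eps\kappa_p - 2\kappa_2\lambda) < -\tfrac{4}{4-\eps} < -1$, and the Borel--Cantelli series converges. Replace your hand-wavy final paragraph with this one-line computation and the proof is complete.
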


\begin{proof} By Lemma~\ref{lem: martingale repn}, we have 
$\sum_{j=1}^{r} \left(Y_j-\xi_j \right)=u_1-u_{r+1}$,
and thus
\beqn
\left\|\sum_{j=1}^{r} \left(Y_j-\xi_j \right) \right\|_{L^4}=\left\|u_1-u_{r+1} \right\|_{L^4}
\ll 1+(r+1)^{\eps\kappa_p} \ll r^{\eps\kappa_p}.
\eeqn
Moreover, by  Markov's inequality and \eqref{choose eps}, 
\begin{eqnarray*}
\mu\left\{ \left|\sum_{j=1}^{r} \left(Y_j-\xi_j \right) \right|\ge r^{2 \kappa_2 \lambda} \right\}
\le r^{-8\kappa_2\lambda}\ \IE\left| \sum_{j=1}^{r} \left(Y_j-\xi_j \right) \right|^4
&\ll & r^{4(-2\kappa_2\lambda + \eps \kappa_p)} \\
&\ll & r^{-\frac{4}{4-\eps}},
\end{eqnarray*}
and hence 
$\sum_{r=1}^\infty \mu\left\{ \left|\sum_{j=1}^{r} \left(Y_j-\xi_j \right) \right|\ge r^{2 \kappa_2 \lambda} \right\}<\infty$.
By the Borel-Cantelli lemma (Lemma~\ref{lem: B-C}), 
we get
$\left|\sum_{j=1}^{r} \left(Y_j-\xi_j \right) \right|\ll r^{2\kappa_2\lambda}$,\ a.s.. 
\end{proof}

According to Lemma~\ref{lem: remainder} and Lemma~\ref{lem: xi remainder}, 
we shall focus on proving ASIP for the process $\{\xi_j\}_{j\ge 1}$.

\subsection{ASIP for $\{\xi_j\}_{j\ge 1}$ }

In this subsection, we shall use Proposition~\ref{Shao ASIP} to
prove a version of ASIP for the martingale difference sequence $\{(\xi_j, \fG_j)\}_{j\ge 1}$.
We first need a technical lemma with the following almost sure estimate.

\begin{lemma}\label{lem: est Rj}
$\sum_{j=1}^r  \left[ \IE\left(\xi_j^2\left| \fG_{j-1} \right. \right) - \IE\xi_j^2 \right]= o(r^{4\kappa_2 \lambda})$, \ a.s.
\end{lemma}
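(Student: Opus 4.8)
The plan is to apply the Gal–Koksma strong law (Proposition~\ref{Gal-Koksma}) to the centered sequence $R_j := \IE(\xi_j^2\mid\fG_{j-1}) - \IE\xi_j^2$. Since $\IE R_j = 0$, it suffices to produce an exponent $\kappa$ with
\beqn
\IE\Bigl(\sum_{k=m}^{m+n-1} R_k\Bigr)^2 \ll (m+n)^{\kappa} - m^{\kappa},
\eeqn
and then pick $\kappa$ slightly below $8\kappa_2\lambda$, so that Proposition~\ref{Gal-Koksma} gives $\sum_{j=1}^r R_j = o(r^{\kappa/2+\delta}) = o(r^{4\kappa_2\lambda})$ for a suitable $\delta>0$; this is where the inequality $\tfrac{\eps\kappa_p}{\kappa_2} < 4\lambda - 1$ from \eqref{choose eps} will be used to leave room. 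Expanding the square, the task reduces to bounding $\IE(R_k R_\ell)$ for $k\le \ell$.

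First I would record the pointwise/$L^2$ size of each $R_j$: by Lemma~\ref{lem: martingale repn}, $\|\xi_j\|_{L^4}\ll j^{\eps\kappa_p}$, so $\|\xi_j^2\|_{L^2}\ll j^{2\eps\kappa_p}$, hence by the conditional Jensen inequality $\|R_j\|_{L^2}\ll j^{2\eps\kappa_p}$ as well. This already controls the diagonal terms $\IE(R_k^2)\ll k^{4\eps\kappa_p}$. The key point is the off-diagonal decay. Here I would use that $\xi_j = Y_j - u_j + u_{j+1}$ with $Y_j$ being $\fF_{\tau_{j-1}}^{\tau_{j+2}}$-measurable and $u_j,u_{j+1}$ expressible via conditional expectations that inherit the same block-measurability (up to index shift), so $\xi_j^2$ — and therefore $\IE(\xi_j^2\mid\fG_{j-1})$ — is, after a small controlled error, essentially measurable with respect to a $\sigma$-algebra of the form $\fF_{\tau_{j-1}}^{\tau_{j+2}}$. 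Since $R_k$ is $\fF_{-\infty}^{\tau_{k+2}}$-measurable (via $\fG_{k-1}$ and $\xi_k$) while $R_\ell$ for $\ell$ much larger than $k$ depends on blocks near $\tau_\ell$, the gap between the relevant $\sigma$-algebras is $\gg \tau_\ell - \tau_{k+2} \gg \ell(\ell-k)^{\eps}\cdot(\text{const})$ once $\ell - k$ is large; applying the covariance inequality Lemma~\ref{lem: cov inequality} with exponents $s_1=s_2=4$, $s_3=2$ together with the exponential $\alpha$-mixing of Proposition~\ref{prop: alpha mixing} yields
\beqn
|\IE(R_k R_\ell)| \ll \|R_k\|_{L^4}\,\|R_\ell\|_{L^4}\,\vartheta_0^{c\,(\ell-k)^{\eps}} \ll (k\ell)^{2\eps\kappa_p}\,\vartheta_0^{c(\ell-k)^{\eps}}
\eeqn
for $\ell - k$ exceeding an absolute constant (for $\ell-k$ bounded, just use the trivial bound $\|R_k\|_{L^2}\|R_\ell\|_{L^2}$). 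Summing this over $k\le\ell$ in $[m,m+n)$: the diagonal and near-diagonal band contribute $\ll \sum_{k=m}^{m+n-1} k^{4\eps\kappa_p} \ll (m+n)^{4\eps\kappa_p+1} - m^{4\eps\kappa_p+1}$, and the stretched-exponential factor makes the far-off-diagonal sum of the same order; so the bound holds with $\kappa = 4\eps\kappa_p + 1$.

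It remains to check the arithmetic: we need $\kappa/2 < 4\kappa_2\lambda$ with some slack for the $\delta$, i.e. $2\eps\kappa_p + \tfrac12 < 4\kappa_2\lambda$, equivalently $\tfrac{\eps\kappa_p}{\kappa_2} < 4\lambda - 1 - \tfrac{1}{2\kappa_2} + \tfrac{1}{2\kappa_2}$... more directly, $2\eps\kappa_p + \tfrac12 < 4\kappa_2\lambda$ follows from $4\lambda-1>\eps\kappa_p/\kappa_2$ precisely when $\kappa_2 \le \tfrac12$; in general I would instead derive the cleaner exponent by noting the band of width $O((\log(1/\vartheta_0))^{1/\eps})$ around the diagonal dominates and recombining the estimate using the first inequality of \eqref{choose eps}. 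The main obstacle is exactly this bookkeeping — verifying that $\xi_j^2$ and its conditional expectation can legitimately be treated as block-measurable (the $u_j$ terms are infinite sums of conditional expectations, so one must truncate them and absorb the tail into an $L^4$ error that is still summable after squaring), and then threading the two constraints in \eqref{choose eps} through to land below $r^{4\kappa_2\lambda}$ rather than merely below $r^{\kappa_2}$ or some weaker power. Once the covariance decay above is in hand, the Gal–Koksma input and the final $o(\cdot)$ conclusion are routine.
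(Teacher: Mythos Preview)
Your overall strategy---bound $\IE\bigl(\sum R_j\bigr)^2$ and invoke Gal--Koksma---matches the paper's, and your diagonal bound $\|R_j\|_{L^2}\ll j^{2\eps\kappa_p}$ is correct. But the off-diagonal step has a genuine gap.

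The claim that $\xi_j^2$ and $\IE(\xi_j^2\mid\fG_{j-1})$ are ``essentially'' $\fF_{\tau_{j-1}}^{\tau_{j+2}}$-measurable is false, and truncating the series for $u_j$ does not fix it. Each term $\IE(Y_{j+k}\mid\fG_{j-1})$ is $\fG_{j-1}$-measurable, and $\fG_{j-1}\subset\fF_{-1}^{\tau_{j+1}}$ reaches all the way back to time $-1$; no finite truncation changes that. Consequently $u_j$, $\xi_j$, $\xi_j^2$, and $R_j$ are all measurable with respect to \emph{backward} $\sigma$-algebras of the form $\fF_{-\infty}^{\tau_{j+2}}$. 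Since both $R_k$ and $R_\ell$ live in backward $\sigma$-algebras, Proposition~\ref{prop: alpha mixing} (which separates $\fF_{-\infty}^{k}$ from $\fF_{k+n}^{\infty}$) gives you no decay for $\alpha$ between them, and Lemma~\ref{lem: cov inequality} cannot be applied to the pair $(R_k,R_\ell)$ as you propose. A secondary issue: you invoke $\|R_k\|_{L^4}$, but $\xi_j$ is only known to lie in $L^4$, so $\xi_j^2\in L^2$ and you have no $L^4$ control on $R_j$.

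The paper circumvents both problems by first using the tower property to write $\IE(R_jR_{j+k})=\IE\bigl(R_j(\xi_{j+k}^2-\IE\xi_{j+k}^2)\bigr)$, then exploiting the martingale-difference identity \eqref{martingale diff identity} to rewrite $\sum_k\xi_{j+k}^2$ as $\bigl(\sum_k\xi_{j+k}\bigr)^2$, and finally substituting $\xi_{j+k}=Y_{j+k}-u_{j+k}+u_{j+k+1}$ so that the resulting products involve the genuinely block-measurable $Y_{j+k}\in\fF_{\tau_{j+k-1}}^{\tau_{j+k+2}}$. Only then does the covariance inequality apply, pairing the backward-measurable $R_j$ (in $L^2$) against forward-measurable factors like $Y_{j+k}^2$ or $Y_{j+k}Y_{j+k+\ell}$ (in $L^{p/2}$). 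This is the missing idea in your sketch.
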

\begin{proof}
We denote 
$R_j:=\IE\left(\xi_j^2\left| \fG_{j-1} \right. \right) - \IE\xi_j^2=\IE\left(\xi_j^2 -  \IE\xi_j^2\left| \fG_{j-1} \right. \right)$, 
and note that $R_j$ is $\fG_{j-1}$-measurable and $\IE R_j=0$.
Moreover, by Lemma~\ref{lem: martingale repn} and Jensen's inequality, 
\beqn
\|R_j\|_{L^2}\le \sqrt{\IE\left(\xi_j^2 -  \IE\xi_j^2\right)^2}\le \|\xi_j\|_{L^4}^2 \ll j^{2\eps \kappa_p}. 
\eeqn 
If for any $m\ge 1$ and any $r\ge 1$, 
\beq\label{est Rj}
\IE\left( \sum_{j=m}^{m+r-1}  R_j \right)^2 \ll (m+r)^{1+8\eps \kappa_p} - m^{1+8\eps \kappa_p} ,
\eeq
then Lemma~\ref{lem: est Rj} immediately follows from \eqref{choose eps} 
and Gal-Koksma (Proposition~\ref{Gal-Koksma}).
In the rest of the proof, we shall prove \eqref{est Rj}. Using that $\IE R_j=0$ and $\IE R_j^2\ge 0$, 
we first notice that 
{\allowdisplaybreaks
\begin{eqnarray*}
\IE\left( \sum_{j=m}^{m+r-1}  R_j \right)^2 
&\le & 2\sum_{j=m}^{m+r-1} \sum_{k=0}^{m+r-1-j} \IE(R_j R_{j+k}) \\
&= & 2\sum_{j=m}^{m+r-1} \sum_{k=0}^{m+r-1-j} 
\IE\left(R_j \IE\left(\xi_{j+k}^2 -  \IE\xi_{j+k}^2\left| \fG_{j+k-1} \right. \right) \right)\\
&=& 2\sum_{j=m}^{m+r-1} \sum_{k=0}^{m+r-1-j} \IE\left(R_j \left(\xi_{j+k}^2 -  \IE\xi_{j+k}^2 \right) \right) \\
&=& 2\sum_{j=m}^{m+r-1} \IE\left(R_j \sum_{k=0}^{m+r-1-j} \xi_{j+k}^2 \right) \\
&=& 2\sum_{j=m}^{m+r-1} \IE\left(R_j \left(\sum_{k=0}^{m+r-1-j} \xi_{j+k}  \right)^2\right) \\
& & - 4\sum_{j=m}^{m+r-1} \mathop{\sum\sum}_{0\le k<\ell\le m+r-1-j} \IE\left(R_j \xi_{j+k} \xi_{j+\ell}\right) \\
&=& 2\sum_{j=m}^{m+r-1} \IE\left(R_j \left(\sum_{k=0}^{m+r-1-j} \xi_{j+k}  \right)^2\right).
\end{eqnarray*} 
}In the last step, we use \eqref{martingale diff identity} to conclude that 
$\IE\left(R_j \xi_{j+k} \xi_{j+\ell}\right)=0$ if $k<\ell$.
By Lemma~\ref{lem: martingale repn}, we further obtain 
{\allowdisplaybreaks
\begin{eqnarray*}
& & \IE\left( \sum_{j=m}^{m+r-1}  R_j \right)^2 \\
&\le & 2\sum_{j=m}^{m+r-1} \IE\left(R_j \ \left[\sum_{k=0}^{m+r-1-j} Y_{j+k}   + \left(u_{m+r-1} - u_j\right) \right]^2\right) \\
&\le & 2\sum_{j=m}^{m+r-1} \left\{ \sum_{k=0}^{m+r-1-j}  \IE\left(R_j Y_{j+k}^2 \right) \right. + 2  \sum_{k=0}^{m+r-1-j} \sum_{\ell=1}^{m+r-1-j-k} \IE\left(R_j Y_{j+k} Y_{j+k+\ell} \right) \\
& & \ \ \ \ \ \ \ \ \ \ \ \ + 2  \sum_{k=0}^{m+r-1-j}  \IE\left(R_j Y_{j+k} u_{m+r-1} \right) 
 - 2   \sum_{k=0}^{m+r-1-j}  \IE\left(R_j Y_{j+k} u_j\right) \Bigg\}  \\
& & +2\sum_{j=m}^{m+r-1} \IE\left(R_j \left(u_{m+r-1} - u_j\right)^2 \right)  \\
&=:& 2\sum_{j=m}^{m+r-1} \left( I_1 + I_2 + I_3 + I_4  \right) + 2 I_5.
\end{eqnarray*}
}
To prove \eqref{est Rj}, it suffices to show  that 
\beqn
|I_i|\ll j^{8\eps \kappa_p}, \ \text{for}\ i=1, 2, 3, 4, \ \ \text{and}\ \ 
|I_5|\ll (m+r)^{1+8\eps \kappa_p}-m^{1+8\eps \kappa_p}.
\eeqn

For $I_1$: Recall that $\|R_j\|_{L^2}\ll j^{2\eps\kappa_p}$, and $R_j$ is $\fG_{j-1}$- and thus
$\fF_{-1}^{\tau_{j+1}}$-measurable.
By Lemma~\ref{lem: est Y},
$\|Y_{j+k}^2\|_{L^{p/2}}\le \|Y_{j+k}\|_{L^p}^2\ll (j+k)^{2\eps \kappa_p}$,
and $Y_{j+k}^2$ is $\fF^{\tau_{j+k+2}}_{\tau_{j+k-1}}$- and thus $\fF^{\infty}_{\tau_{j+k-1}}$-measurable.
Applying Lemma~\ref{lem: cov inequality} and Proposition~\ref{prop: alpha mixing}, we 
take $s=\frac{2p}{p-4}$ and get
{\allowdisplaybreaks
\begin{eqnarray*}
\left| I_1\right| &\le & \sum_{k=0}^{m+r-1-j}  \left| \IE\left(R_j Y_{j+k}^2 \right)\right| \\
&\le & \sum_{k=0}^{m+r-1-j} 10   \alpha(\fF_{-1}^{\tau_{j+1}}, \fF^{\infty}_{\tau_{j+k-1}})^{\frac{1}{s}} 
\left\|R_j \right\|_{L^{2}} \left\|Y_{j+k}^2 \right\|_{L^{p/2}} \\
&\ll &  \sum_{k=0}^2 j^{2\eps \kappa_p }(j+k)^{2\eps \kappa_p} 
+ \sum_{k=3}^{m+r-1-j} \vartheta_0^{\frac{\lceil (j+k-2)^\eps \rceil}{s}} j^{2\eps\kappa_p} (j+k)^{2\eps \kappa_p} \\
&\ll & j^{4\eps \kappa_p }\left[\cO(1)+ \sum_{k=3}^{m+r-1-j} \vartheta_0^{\frac{\lceil (k-2)^\eps \rceil}{s}}(1+k)^{2\eps \kappa_p} \right]
\ll j^{8\eps \kappa_p }.
\end{eqnarray*}
}

For $I_2$: we split the double sum into the cases $k\ge \ell$ and $k<\ell$, that is,
\beqn
|I_2|\le 2\sum_{k=0}^{m+r-1-j} \sum_{1\le \ell\le k} \left|\IE\left(R_j (Y_{j+k} Y_{j+k+\ell}) \right) \right|
+2\sum_{\ell=1}^{m+r-1-j} \sum_{0\le k<\ell} \left|\IE\left((R_j Y_{j+k} ) Y_{j+k+\ell} \right) \right|.
\eeqn
In the first summation, we note that $\ell\le k$ and
\beqn
\|Y_{j+k} Y_{j+k+\ell}\|_{L^{p/2}}\le \|Y_{j+k}\|_{L^p} \|Y_{j+k+\ell}\|_{L^p} 
\ll (j+k)^{\eps \kappa_p}(j+k+\ell)^{\eps \kappa_p} \ll (j+2k)^{2\eps \kappa_p}.
\eeqn
Applying Lemma~\ref{lem: cov inequality} and Proposition~\ref{prop: alpha mixing}, we 
take $s=\frac{2p}{p-4}$ and get
{\allowdisplaybreaks
\begin{eqnarray*}
& &\sum_{k=0}^{m+r-1-j} \sum_{1\le \ell\le k} \left|\IE\left(R_j (Y_{j+k} Y_{j+k+\ell}) \right) \right|  \\
&\le & \sum_{k=0}^{m+r-1-j} \sum_{1\le \ell\le k} 10   \alpha(\fF_{-1}^{\tau_{j+1}}, \fF^{\infty}_{\tau_{j+k-1}})^{\frac{1}{s}} 
\left\|R_j \right\|_{L^{2}} \left\|Y_{j+k} Y_{j+k+\ell} \right\|_{L^{p/2}} \\
&\ll &  \sum_{k=0}^2 k j^{2\eps \kappa_p }(j+2k)^{2\eps \kappa_p} 
+ \sum_{k=3}^{m+r-1-j} k \vartheta_0^{\frac{\lceil (j+k-2)^\eps \rceil}{s}} j^{2\eps\kappa_p} (j+2k)^{2\eps \kappa_p} \\
&\ll & j^{4\eps \kappa_p }\left[\cO(1)+ \sum_{k=3}^{m+r-1-j} \vartheta_0^{\frac{\lceil (k-2)^\eps \rceil}{s}}(1+2k)^{1+2\eps \kappa_p} \right]
\ll j^{8\eps \kappa_p }.
\end{eqnarray*}
}
In the second summation, we note that $k<\ell$ and hence
\beqn
\|R_jY_{j+k} \|_{L^{4/3}}\le \|R_j\|_{L^2}^{4/3} \|Y_{j+k}\|_{L^4}^{4/3}
\ll \left[j^{2\eps \kappa_p}(j+k)^{\eps \kappa_p}\right]^{4/3} \ll (j+k)^{4\eps \kappa_p}\le (j+\ell)^{4\eps \kappa_p}.
\eeqn
Also, $\|Y_{j+k+\ell} \|_{L^p}\ll (j+k+\ell)^{\eps\kappa_p}\le (j+2\ell)^{\eps \kappa_p}$.
Applying Lemma~\ref{lem: cov inequality} and Proposition~\ref{prop: alpha mixing}, we 
take $s'=\frac{4p}{p-4}$ and get
{\allowdisplaybreaks
\begin{eqnarray*}
& &\sum_{\ell=1}^{m+r-1-j} \sum_{0\le k<\ell} \left|\IE\left((R_j Y_{j+k} ) Y_{j+k+\ell} \right) \right|  \\
&\le & \sum_{\ell=1}^{m+r-1-j} \sum_{0\le k< \ell} 10   \alpha(\fF_{-1}^{\tau_{j+k+1}}, \fF^{\infty}_{\tau_{j+k+\ell-1}})^{\frac{1}{s'}} 
\left\|R_j Y_{j+k} \right\|_{L^{4/3}} \left\|Y_{j+k+\ell} \right\|_{L^{p}} \\
&\ll &  \sum_{\ell=1}^2 \ell (j+\ell)^{4\eps \kappa_p }(j+2\ell)^{\eps \kappa_p} 
+ \sum_{\ell=3}^{m+r-1-j} \ell \vartheta_0^{\frac{\lceil (j+k+\ell-2)^\eps \rceil}{s'}} 
(j+\ell)^{4\eps \kappa_p }(j+2\ell)^{\eps \kappa_p} \\
&\ll & j^{5\eps \kappa_p }\left[\cO(1)
+ \sum_{\ell=3}^{m+r-1-j} \vartheta_0^{\frac{\lceil (\ell-2)^\eps \rceil}{s}}(1+2\ell)^{1+5\eps \kappa_p} \right]
\ll j^{8\eps \kappa_p }.
\end{eqnarray*}
}
Therefore, $|I_2| \ll j^{8\eps \kappa_p}$. 

For $I_3$: by the definition of $u_j$ in \eqref{def uj}, we rewrite 
{\allowdisplaybreaks
\begin{eqnarray*}
\sum_{k=0}^{m+r-1-j}  \IE\left(R_j Y_{j+k} u_{m+r-1} \right)
&=& \sum_{k=0}^{m+r-1-j}  \sum_{\ell=0}^\infty \IE\left(R_j Y_{j+k} \IE\left( Y_{m+r-1+\ell} | \fG_{m+r-2} \right)\right) \\
&=& \sum_{k=0}^{m+r-1-j}  \sum_{\ell=0}^\infty \IE\left(R_j Y_{j+k}  Y_{m+r-1+\ell} \right) \\
&=& \sum_{k=0}^{m+r-1-j}  \sum_{\ell=m+r-1-j-k}^\infty \IE\left(R_j Y_{j+k}  Y_{j+k+\ell} \right)
\end{eqnarray*}
}
We can split $I_3$ into the cases $k\ge \ell$ and $k<\ell$, and obtain $|I_3|\ll j^{8\eps \kappa_p}$ by 
applying similar estimates for $I_2$.

For $I_4$: Note that 
$\|R_j u_j\|_{L^{4/3}}\le \|R_j\|_{L^2}^{4/3} \|u_{j}\|_{L^4}^{4/3}
\ll \left[j^{2\eps \kappa_p} j^{\eps \kappa_p}\right]^{4/3} = j^{4\eps \kappa_p}$.
Applying Lemma~\ref{lem: cov inequality} and Proposition~\ref{prop: alpha mixing}, we 
take $s'=\frac{4p}{p-4}$ and get
{\allowdisplaybreaks
\begin{eqnarray*}
|I_4| &\le & 2\sum_{k=0}^{m+r-1-j} \left| \IE\left(R_j Y_{j+k} u_j\right)\right| \\
&\le & 2 \sum_{k=0}^{m+r-1-j} 10   \alpha(\fF_{-1}^{\tau_{j+1}}, \fF^{\infty}_{\tau_{j+k-1}})^{\frac{1}{s'}} 
\left\|R_j u_j \right\|_{L^{4/3}} \left\|Y_{j+k} \right\|_{L^{p}} \\
&\ll &  \sum_{k=0}^2 j^{4\eps \kappa_p }(j+k)^{\eps \kappa_p} 
+ \sum_{k=3}^{m+r-1-j}  \vartheta_0^{\frac{\lceil (j+k-2)^\eps \rceil}{s'}} 
j^{4\eps \kappa_p }(j+k)^{\eps \kappa_p} \\
&\ll & j^{5\eps \kappa_p }\left[\cO(1)+ \sum_{k=3}^{m+r-1-j} 
\vartheta_0^{\frac{\lceil (k-2)^\eps \rceil}{s'}}(1+k)^{5\eps \kappa_p} \right]
\ll j^{8\eps \kappa_p }.
\end{eqnarray*}
}

For $I_5$: by Cauchy-Schwartz inequality,
{\allowdisplaybreaks
\begin{eqnarray*}
|I_5| 
\le \sum_{j=m}^{m+r-1} \left| \IE\left(R_j \left(u_{m+r-1} - u_j\right)^2 \right)\right|  
&\le & \sum_{j=m}^{m+r-1} \|R_j\|_{L^2}  \|u_{m+r-1}-u_j\|_{L^4}^2  \\
&\le & \sum_{j=m}^{m+r-1} \|R_j\|_{L^2}  \left(\|u_{m+r-1}\|_{L^4} + \|u_j\|_{L^4}\right)^2  \\
&\ll & \sum_{j=m}^{m+r-1} j^{2\eps \kappa_p} \left[  (m+r-1)^{\eps \kappa_p} + j^{\eps\kappa_p} \right]^2 \\
&\ll & (m+r)^{2\eps \kappa_p}  \sum_{j=m}^{m+r-1} j^{2\eps \kappa_p} \\
&\le & (m+r)^{2\eps \kappa_p}  \sum_{j=m}^{m+r-1} j^{6\eps \kappa_p} \\
&\le &  (m+r)^{1+8\eps \kappa_p} -m^{1+8\eps \kappa_p}.
\end{eqnarray*}
}
The proof of Lemma~\ref{lem: est Rj} is complete.
\end{proof}

We are now ready to show an ASIP for the sequence $\{\xi_j\}_{j\ge 1}$. 

\begin{lemma}\label{lem: ASIP xi} 
$\{\xi_j\}_{j\ge 1}$ satisfies an ASIP as follows: 
put $b_r^2=\IE\left( \sum_{j=1}^r \xi_j \right)^2$.
There exists a Wiener process $W(\cdot)$ such that 
\beqn
\left|\sum_{j=1}^r \xi_j - W(b_r^2)\right|
=o\left( r^{2\kappa_2\lambda(1+\eps)} \right), \ a.s..
\eeqn
\end{lemma}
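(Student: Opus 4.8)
The plan is to verify the two hypotheses \eqref{Shao Cond1} and \eqref{Shao Cond2} of Proposition~\ref{Shao ASIP} for the martingale difference sequence $\{(\xi_j, \fG_j)\}_{j\ge 1}$, with the choice of normalizing sequence
\beqn
a_r:=r^{4\kappa_2\lambda}.
\eeqn
Observe first that $a_r$ is non-decreasing and tends to infinity since $\kappa_2,\lambda>0$. Condition \eqref{Shao Cond1}, namely $\sum_{j=1}^r \left[\IE(\xi_j^2|\fG_{j-1}) - \IE\xi_j^2\right]=o(a_r)$ a.s., is \emph{exactly} the content of Lemma~\ref{lem: est Rj}, so nothing new is needed there. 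For Condition \eqref{Shao Cond2}, I would use the moment bound $\|\xi_j\|_{L^4}\ll j^{\eps\kappa_p}$ from Lemma~\ref{lem: martingale repn}, which gives $\IE|\xi_j|^4\ll j^{4\eps\kappa_p}$, so that
\beqn
\sum_{j=1}^\infty a_j^{-2}\,\IE|\xi_j|^4 \ll \sum_{j=1}^\infty j^{-8\kappa_2\lambda + 4\eps\kappa_p}.
\eeqn
This series converges provided $8\kappa_2\lambda - 4\eps\kappa_p > 1$, i.e. $\eps\kappa_p/\kappa_2 < 2\lambda - \tfrac14$. Here one invokes the second inequality of \eqref{choose eps}, which says $\eps\kappa_p/\kappa_2 < 4\lambda - 1$; since $4\lambda-1 \le 2\lambda-\tfrac14$ is equivalent to $2\lambda \le \tfrac34$, i.e. $\lambda\le\tfrac38$, a small extra care is required — one must check that the \emph{choice} of $\eps$ in \eqref{choose eps} (which we are free to shrink) actually also forces $8\kappa_2\lambda - 4\eps\kappa_p>1$; taking $\eps$ small enough makes $4\eps\kappa_p$ as small as we like, and since $8\kappa_2\lambda > 2$ for $\lambda>\tfrac14$ and $\kappa_2>\tfrac14$ fails only marginally, the cleanest route is simply to shrink $\eps$ further at this point so that $8\kappa_2\lambda - 4\eps\kappa_p > 1$ holds outright, which is compatible with \eqref{choose eps}. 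This is the one bookkeeping subtlety and I expect it to be the main (mild) obstacle.

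Granting both conditions, Proposition~\ref{Shao ASIP} yields a Wiener process $W(\cdot)$ with
\beqn
\left|\sum_{j=1}^r \xi_j - W(b_r^2)\right| = o\left(\left(a_r\left(|\log(b_r^2/a_r)| + \log\log a_r\right)\right)^{1/2}\right), \ a.s..
\eeqn
It then remains to simplify the error term to the claimed $o(r^{2\kappa_2\lambda(1+\eps)})$. For this I would bound $b_r^2$ from above and below in terms of powers of $r$: from $\|\sum_{j=1}^r\xi_j\|_{L^2} \le \|\sum_{j=1}^r Y_j\|_{L^2} + \|u_1-u_{r+1}\|_{L^2}$ together with Lemma~\ref{lem: est Y} ($\|\sum_{j=1}^r Y_j\|_{L^2}\gg r^{\kappa_2}$, and an analogous upper bound obtained from summing $\|Y_j\|_{L^p}\ll j^{\eps\kappa_p}$ via Minkowski or directly from \eqref{M-Z}) and Lemma~\ref{lem: martingale repn} ($\|u_j\|_{L^4}\ll j^{\eps\kappa_p}$), one gets that $b_r^2$ is polynomially bounded above and below, say $r^{2\kappa_2}\ll b_r^2 \ll r^{C}$ for some constant $C$. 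Consequently $|\log(b_r^2/a_r)| = \cO(\log r)$ and $\log\log a_r = \cO(\log\log r)$, so the error term is
\beqn
o\left((r^{4\kappa_2\lambda}\log r)^{1/2}\right) = o\left(r^{2\kappa_2\lambda}(\log r)^{1/2}\right) = o\left(r^{2\kappa_2\lambda(1+\eps)}\right),
\eeqn
where in the last step the factor $(\log r)^{1/2}$ is absorbed into the extra power $r^{2\kappa_2\lambda\eps}$ since $\log r = o(r^\delta)$ for every $\delta>0$. This completes the argument; the only genuine input beyond routine estimates is Lemma~\ref{lem: est Rj} (already proved) and the verification that $\eps$ can be chosen small enough to satisfy both \eqref{choose eps} and the convergence of the series in \eqref{Shao Cond2}.
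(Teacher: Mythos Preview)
Your approach is correct and essentially identical to the paper's: apply Proposition~\ref{Shao ASIP} with $a_r=r^{4\kappa_2\lambda}$, invoke Lemma~\ref{lem: est Rj} for \eqref{Shao Cond1}, use the $L^4$ bound on $\xi_j$ from Lemma~\ref{lem: martingale repn} for \eqref{Shao Cond2}, and then absorb the logarithmic factors in Shao's error term into the extra power $r^{2\kappa_2\lambda\eps}$. The only place you overcomplicate matters is the ``bookkeeping subtlety'' around \eqref{Shao Cond2}: no additional shrinking of $\eps$ is needed, because the \emph{first} inequality in \eqref{choose eps} already does the job. Multiplying $2\eps\kappa_p+\tfrac{1}{4-\eps}<2\kappa_2\lambda$ by $4$ gives $8\kappa_2\lambda-8\eps\kappa_p>\tfrac{4}{4-\eps}>1$, hence a fortiori $8\kappa_2\lambda-4\eps\kappa_p>1$, and in fact the paper bounds the tail by $\sum_j j^{-4/(4-\eps)}<\infty$ directly. (Incidentally, your rewriting of $8\kappa_2\lambda-4\eps\kappa_p>1$ as $\eps\kappa_p/\kappa_2<2\lambda-\tfrac14$ is off; the right-hand side should be $2\lambda-\tfrac{1}{4\kappa_2}$.)
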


\begin{proof} We directly apply Proposition~\ref{Shao ASIP} to the  
$L^4$-integrable martingale difference sequence $\{(\xi_j, \fG_j)\}_{j\ge 1}$. 
Set $a_r=r^{4\kappa_2\lambda}$, then 
Condition~\eqref{Shao Cond1} holds by Lemma~\ref{lem: est Rj}.
Condition~\eqref{Shao Cond2} also holds, since by Lemma~\ref{lem: martingale repn} and
\eqref{choose eps}, we have
\begin{eqnarray*}
\sum_{j=1}^\infty a_j^{-2} \IE|\xi_j|^{4}\ll \sum_{j=1}^\infty j^{-8\kappa_2\lambda} j^{4\eps \kappa_p}
\le \sum_{j=1}^\infty j^{-\frac{4}{4-\eps}}<\infty.
\end{eqnarray*}
On the one hand, by \eqref{choose eps}, $\eps\kappa_p<\kappa_2(4\lambda-1)<\kappa_2$, and thus
\beqn
b_r=\left\|\sum_{j=1}^r \xi_j \right\|_{L^2}\le \sum_{j=1}^r \left\|\xi_j \right\|_{L^2}\ll 
\sum_{j=1}^r j^{\eps \kappa_p}\ll r^{1+\eps\kappa_p}\ll r^{1+\kappa_2}.
\eeqn
On the other hand, by Lemma~\ref{lem: est Y} and Lemma~\ref{lem: xi remainder},
\beqn
b_r=\left\|\sum_{j=1}^r \xi_j \right\|_{L^2}\ge 
\left\|\sum_{j=1}^r Y_j \right\|_{L^2} - \left\|\sum_{j=1}^r (Y_j-\xi_j) \right\|_{L^4}
\gg r^{\kappa_2} - \cO(r^{\eps \kappa_p}) \gg r^{\kappa_2}.
\eeqn
Therefore, 
$r^{2\kappa_2(1-2\lambda)}\ll b_r^2/a_r\ll r^{2\kappa_2(1-2\lambda)+2}$,
and hence $\left|\log(b_r^2/a_r)\right|\ll r^{4\kappa_2\lambda\eps}$. 
It is obvious that $\log\log a_r\ll r^{4\kappa_2\lambda\eps}$ as well.
By Proposition~\ref{Shao ASIP}, we have 
{\allowdisplaybreaks
\begin{eqnarray*}
\left|\sum_{j=1}^r \xi_j - W(b_r^2)\right|
&=& o\left( \left( a_r \left( \left|\log(b_r^2/a_r)\right| + \log\log a_r \right)    \right)^{1/2} \right), \ a.s. \\
&\le & o\left( r^{2\kappa_2\lambda(1+\eps)} \right), \ a.s..
\end{eqnarray*}
}
\end{proof}

\subsection{ASIP for $X_\bbf$ }
Finally, we prove Theorem~\ref{main} - the ASIP for the random process
$X_\bbf=\{X_n\}_{n\ge 0}=\{f_n\circ T^n\}_{n\ge 0}$. By the previous subsections,
we can now write
\beq\label{sum decomposition2}
\sum_{k=0}^{n-1} X_k = \sum_{j=1}^{r(n)-1} \xi_j +
\sum_{j=1}^{r(n)-1} (Y_j-\xi_j) + U_n + V_n.
\eeq
We first compare the variances $\sigma_n^2=\IE\left(\sum_{k=0}^{n-1} X_k\right)^2$
and $b_{r(n)-1}^2=\IE\left(\sum_{j=1}^{r(n)-1} \xi_j\right)^2$. 

\begin{lemma}\label{lem: compare var}
$\left| \sigma_n - b_{r(n)-1}\right|\ll n^{\eps \kappa_p}$. 
As a result, for any Wiener process $W(\cdot)$, 
\beqn
\left|W(\sigma_n^2)-W(b_{r(n)-1}^2)\right| = \cO\left(\sigma_n^{2\lambda}\right), \ a.s.
\eeqn
\end{lemma}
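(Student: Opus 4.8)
The plan is to estimate $|\sigma_n - b_{r(n)-1}|$ by collecting all the error terms already bounded in the previous subsections, and then deduce the statement about the Wiener process from the modulus of continuity of Brownian motion together with the established lower bound $\sigma_n \gg n^{\kappa_2}$. Writing $r=r(n)$ and using the decomposition \eqref{sum decomposition2}, we have $\sum_{k=0}^{n-1}X_k = \sum_{j=1}^{r-1}\xi_j + E_n$, where $E_n := \sum_{j=1}^{r-1}(Y_j-\xi_j) + U_n + V_n$. By Minkowski's inequality, $\bigl|\,\|\sum_{k=0}^{n-1}X_k\|_{L^2} - \|\sum_{j=1}^{r-1}\xi_j\|_{L^2}\,\bigr| \le \|E_n\|_{L^2}$, i.e. $|\sigma_n - b_{r-1}| \le \|E_n\|_{L^2}$. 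Now I would bound $\|E_n\|_{L^2}$ term by term: $\|\sum_{j=1}^{r-1}(Y_j-\xi_j)\|_{L^2} \le \|\sum_{j=1}^{r-1}(Y_j-\xi_j)\|_{L^4} = \cO(r^{\eps\kappa_p})$ by Lemma~\ref{lem: xi remainder}; $\|U_n\|_{L^2}\le \|U_n\|_{L^p} = \cO(1)$ by Lemma~\ref{lem: remainder}(i); and $\|V_n\|_{L^2}\le \|V_n\|_{L^p} = \cO(n^{\eps\kappa_p})$ by Lemma~\ref{lem: remainder}(ii). Since $r = r(n) \asymp n^{1/(1+\eps)} \ll n$, all three contributions are $\cO(n^{\eps\kappa_p})$, giving $|\sigma_n - b_{r-1}| \ll n^{\eps\kappa_p}$.

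For the second assertion, I would first observe that $b_{r-1}^2$ and $\sigma_n^2$ are comparable and both of polynomial size: by \eqref{M-Z} and the bound just proved, $\sigma_n \gg n^{\kappa_2}$ and $\sigma_n \asymp b_{r-1}$ (since $\eps\kappa_p < \kappa_2$ by \eqref{choose eps}, the error $n^{\eps\kappa_p}$ is negligible compared to $n^{\kappa_2}$), and also $\sigma_n \ll n^{\kappa_p}$. Then $|\sigma_n^2 - b_{r-1}^2| = |\sigma_n - b_{r-1}|(\sigma_n + b_{r-1}) \ll n^{\eps\kappa_p}\cdot n^{\kappa_p} = n^{(1+\eps)\kappa_p}$, so in particular $|\sigma_n^2 - b_{r-1}^2| \ll \sigma_n^{2\eps\kappa_p/\kappa_2}\cdot \sigma_n^{2}$ — more crudely, $|\sigma_n^2 - b_{r-1}^2| = \cO(\sigma_n^{2})$ and the two arguments of $W$ differ by a quantity that is a small power of $\sigma_n^2$. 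By the Lévy modulus of continuity of Brownian motion (or the law of the iterated logarithm for its increments), for a.e. sample path there is a constant $C$ with $|W(t)-W(s)| \le C\sqrt{|t-s|\,(|\log|t-s|| + 1)}$ once $t,s$ lie in a bounded set, and a standard global version gives $|W(t)-W(s)| = \cO\bigl(|t-s|^{1/2}(\log t)^{1/2}\bigr)$ as $t\to\infty$ uniformly over $|t-s|\le t$. Applying this with $t=\sigma_n^2$, $s=b_{r-1}^2$ and $|t-s|\ll n^{(1+\eps)\kappa_p}$, we get
\beqn
\left|W(\sigma_n^2)-W(b_{r-1}^2)\right| \ll n^{(1+\eps)\kappa_p/2}(\log n)^{1/2}, \ \ \text{a.s.}
\eeqn
It remains to check that the right-hand side is $\cO(\sigma_n^{2\lambda})$. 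Since $\sigma_n \gg n^{\kappa_2}$, it suffices to have $(1+\eps)\kappa_p/2 + \delta < 2\kappa_2\lambda$ for small $\delta>0$ absorbing the logarithm; this follows from the first inequality in \eqref{choose eps}, namely $2\eps\kappa_p + \frac{1}{4-\eps} < 2\kappa_2\lambda$, upon noting that for $\eps$ small enough $(1+\eps)\kappa_p/2$ is dominated by $2\eps\kappa_p + \frac{1}{4-\eps}$ is \emph{not} automatic — so the cleaner route is to use instead that $|t-s|\ll n^{(1+\eps)\kappa_p}$ can be replaced by the sharper $|\sigma_n^2-b_{r-1}^2|\le |\sigma_n-b_{r-1}|(\sigma_n+b_{r-1})\ll n^{\eps\kappa_p}\,\sigma_n$ and $\sigma_n\le b_{r-1}+\cO(n^{\eps\kappa_p})$, then iterate; alternatively one bounds $|W(\sigma_n^2)-W(b_{r-1}^2)|=o(\sigma_n^{2\lambda})$ directly from $\lambda>\frac14$ and continuity, which is all that is needed downstream.

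The main obstacle is the bookkeeping of exponents: one must verify that the modulus-of-continuity error $n^{(1+\eps)\kappa_p/2}(\log n)^{1/2}$ (equivalently, a fractional power of $|\sigma_n^2 - b_{r-1}^2|$ times a logarithmic factor) is genuinely smaller than the target $\sigma_n^{2\lambda} \gg n^{2\kappa_2\lambda}$, and this is exactly where the constraints in \eqref{choose eps} and the hypothesis $\lambda > \max\{\frac14, \frac{1}{8\kappa_2}\}$ are used — the choice of $\eps$ was made precisely to leave room here. The Brownian estimate itself is standard (Csörgő–Révész type increment bounds, or just Borel–Cantelli applied to the Gaussian tail of $W$ on dyadic blocks), so the only real care needed is in confirming that $\eps$ can be taken small enough — depending only on $\lambda, \kappa_2, \kappa_p$ — for the inequality to close, which is guaranteed by \eqref{choose eps}.
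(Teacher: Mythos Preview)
Your first paragraph is correct and matches the paper's argument for $|\sigma_n - b_{r(n)-1}| \ll n^{\eps\kappa_p}$.

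For the second assertion, your write-up contains a real gap. Your first exponent bound $|\sigma_n^2 - b_{r-1}^2| \ll n^{(1+\eps)\kappa_p}$ is too crude: with $\kappa_2 = \tfrac12$, $\kappa_p = 1$ (exactly the stationary case treated later in the paper) and any $\lambda < \tfrac12$, the resulting $n^{(1+\eps)\kappa_p/2}$ already exceeds $n^{2\kappa_2\lambda}$, so the argument cannot close as written --- and you acknowledge this. You then point to the sharper bound $|\sigma_n^2 - b_{r-1}^2| \ll n^{\eps\kappa_p}\sigma_n$, but stop at ``then iterate'' and ``directly from $\lambda > \tfrac14$ and continuity'', neither of which is a proof. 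In fact that sharper bound \emph{does} work with a Cs\"org\H{o}--R\'ev\'esz type increment estimate: one gets $|W(\sigma_n^2) - W(b_{r-1}^2)| \ll (n^{\eps\kappa_p}\sigma_n)^{1/2}(\log \sigma_n^2)^{1/2}$ a.s., and since $n^{\eps\kappa_p} \ll \sigma_n^{\eps\kappa_p/\kappa_2}$ this is $\ll \sigma_n^{2\lambda}$ precisely by the second inequality in \eqref{choose eps}, namely $\eps\kappa_p/\kappa_2 < 4\lambda - 1$. You should have carried this computation through rather than leaving it as a remark about ``bookkeeping''.

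The paper sidesteps pathwise continuity altogether and argues more directly. Since $Z_n := W(\sigma_n^2) - W(b_{r(n)-1}^2)$ is centered Gaussian with variance $|\sigma_n^2 - b_{r(n)-1}^2| \ll \sigma_n^{1+\eps\kappa_p/\kappa_2}$, one has $\IE|Z_n|^s \asymp |\sigma_n^2 - b_{r(n)-1}^2|^{s/2}$ for each fixed $s$, so Markov's inequality gives
\[
\mu\{|Z_n| \ge \sigma_n^{2\lambda}\} \ll \sigma_n^{(s/2)(1+\eps\kappa_p/\kappa_2 - 4\lambda)}.
\]
The exponent is negative by \eqref{choose eps}, and taking $s$ large enough (depending only on $\lambda,\kappa_2,\kappa_p$) makes this summable in $n$ via $\sigma_n \gg n^{\kappa_2}$; Borel--Cantelli then yields $|Z_n| = \cO(\sigma_n^{2\lambda})$ a.s. This is shorter, requires no pathwise increment bounds for Brownian motion, and makes the role of \eqref{choose eps} transparent.
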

\begin{proof} By \eqref{sum decomposition2}, Lemma~\ref{lem: remainder} and Lemma~\ref{lem: xi remainder},
{\allowdisplaybreaks
\begin{eqnarray*}
\left| \sigma_n - b_{r(n)-1} \right| 
&\le & \left\| \sum_{j=1}^{r(n)-1} (Y_j-\xi_j) + U_n + V_n \right\|_{L^2} \\
&\le & \left\| \sum_{j=1}^{r(n)-1} (Y_j-\xi_j) \right\|_{L^4} + \|U_n\|_{L^p} + \|V_n \|_{L^p} \\
&=& \cO\left(\left(r(n)-1\right)^{\eps \kappa_p}\right) + \cO(1) + \cO\left(n^{\eps \kappa_p}\right)\ll n^{\eps \kappa_p}.
\end{eqnarray*}
}    
In the last step, we use the fact that $r(n)\asymp n^{\frac{1}{1+\eps}}\ll n$.
By \eqref{M-Z} and \eqref{choose eps}, 
{\allowdisplaybreaks
\begin{eqnarray*}
\left|\sigma_n^2- b_{r(n)-1}^2\right|
&\le & \left| \sigma_n - b_{r(n)-1}\right|\left( 2\sigma_n + \left| \sigma_n - b_{r(n)-1}\right| \right) \\
&\ll & n^{\eps \kappa_p} \left( 2\sigma_n + n^{\eps \kappa_p} \right) \ll \sigma_n^{\eps \kappa_p/\kappa_2+1}. 
\end{eqnarray*}
}
For any Wiener process $W(\cdot)$, the random variables $Z_n:=W(\sigma_n^2)-W(b_{r(n)-1}^2)$
follows the normal distribution $N\left(0, \left|\sigma_n^2- b_{r(n)-1}^2\right| \right)$.
By \eqref{choose eps}, we can choose a sufficiently large 
$
s>\frac{4\max\{1, 1/\kappa_2\}}{4\lambda-1-\eps \kappa_p/\kappa_2}>4.
$
Then by Markov's inequality and Jensen's inequality, we have 
{\allowdisplaybreaks
\begin{eqnarray*}
\mu\{|Z_n|\ge \sigma_n^{2\lambda}\}\le \sigma_n^{-2\lambda s} \IE|Z_n|^s
\le \sigma_n^{-2\lambda s} \left(\IE|Z_n|^2\right)^{s/2}
&\ll & \sigma_n^{\frac{s}{2}\left[ \eps \kappa_p/\kappa_2+1 -4\lambda\right]} \\
&\ll & \sigma_n^{-2/\kappa_2}\ll n^{-2},
\end{eqnarray*}
}
which implies that $\sum_{n=1}^\infty \mu\{|Z_n|\ge \sigma_n^{2\lambda}\}<\infty$. 
By the Borel-Cantelli lemma (Lemma~\ref{lem: B-C}), 
we get
$|Z_n|\ll \sigma_n^{2\lambda}$, a.s.. 
\end{proof}

We are now ready to prove our main theorem.

\begin{proof}[Proof of Theorem~\ref{main}]
First, by \eqref{sum decomposition2}, Lemma~\ref{lem: remainder} and Lemma~\ref{lem: xi remainder},
we have 
{\allowdisplaybreaks
\begin{eqnarray*}
\left| \sum_{k=0}^{n-1} X_k - \sum_{j=1}^{r(n)-1} \xi_j  \right| 
&\le & \left| \sum_{j=1}^{r(n)-1} (Y_j-\xi_j) \right| + |U_n| + |V_n | \\
&=& \cO\left(\left(r(n)-1\right)^{2 \kappa_2\lambda}\right) + \cO(1) + \cO\left(n^{2 \kappa_2\lambda}\right), \ a.s. \\
&=& \cO\left(n^{2\kappa_2\lambda}\right) = \cO\left(\sigma_n^{2\lambda}\right), \ a.s.
\end{eqnarray*}
}
By Lemma~\ref{lem: ASIP xi} and Lemma~\ref{lem: compare var}, 
there exists a Wiener process $W(\cdot)$ such that 
{\allowdisplaybreaks
\begin{eqnarray*}
\left|\sum_{k=0}^{n-1} X_k - W(\sigma_n^2)\right|
&\le & \left|\sum_{k=0}^{n-1} X_k -  \sum_{j=1}^{r(n)-1} \xi_j \right| +
\left|\sum_{j=1}^{r(n)-1} \xi_j - W(b_{r(n)-1}^2)\right|  \\
& & + \left|W(\sigma_n^2)-W(b_{r(n)-1}^2)\right| \\
&=&  \cO\left(\sigma_n^{2\lambda}\right) + 
o\left( \left(r(n)-1\right)^{2\kappa_2\lambda(1+\eps)} \right) +\cO\left(\sigma_n^{2\lambda}\right) 
= \cO\left(\sigma_n^{2\lambda}\right), \ a.s..
\end{eqnarray*}
}Here we use the fact that $r(n)\asymp n^{\frac{1}{1+\eps}}$ and hence
$\left(r(n)-1\right)^{2\kappa_2\lambda(1+\eps)}\asymp n^{2\kappa_2 \lambda}\ll \sigma_n^{2\lambda}$.
This completes the proof of Theorem \ref{main}.
\end{proof}

\section{Applications to Random Processes for Concrete Systems }\label{sec: app}

\subsection{Concrete hyperbolic systems}\label{sec:concrete}

Our main result applies to a large class of two-dimensional uniformly hyperbolic systems, 
including Anosov diffeomorphisms\footnote{
By adding the boundaries of the finite Markov partition,
a topological mixing $C^2$ Anosov diffeomorphism satisfies our Assumptions \textbf{(H1)-(H5)}.
}
and chaotic billiards.
We shall focus on the Sinai dispersing billiards and their conservative perturbations.
Since such models were studied in \cite{CZ09, DZ11, DZ13}, 
we only remind some basic facts here.

We first recall standard definitions, see \cite{BSC90,BSC91,C99}. 
A  two-dimensional billiard is a dynamical system where a point moves 
freely at the unit speed in a domain $Q\subset \mathbb{R}^2$  and bounces off its
boundary $\pQ$ by the laws of elastic reflection. 
A billiard is dispersing if 
$\pQ$ is a finite union of mutually disjoint $C^3$-smooth curves
with strictly positive curvature.
Four broad classes of perturbations of the dispersing billiards were considered in \cite{DZ11, DZ13}:
\begin{itemize}
  \item[(a)]  Tables with shifted, rotated or deformed scatterers;
  \item[(b)]  Billiards under small external forces which bend trajectories during flight;
  \item[(c)]  Billiards with kicks or twists at reflections, including slips along the disk;
  \item[(d)]  Random perturbations comprised of maps with uniform properties (including
  any of the above classes, or a combination of them).
\end{itemize}

We treat all the above systems in a universal coordinate system. 
More precisely, let $ M=\pQ\times [-\pi/2,\pi/2]$ be the collision space, which
is a standard cross-section of the billiard flow. The canonical coordinate
in $M$ is denoted by $(r, \varphi)$, where $r$ is the arc length parameter
on $\pQ$ and $\varphi\in [-\pi/2,\pi/2]$ is the angle of reflection.
The collision map $T: M \to M$ takes an outward unit vector $(r, \varphi)$
at $\partial Q$ to the outward unit vector after the next collision, 
and the singularity of $T$ is caused by the tangential collisions, that is,
$S_1=\partial M\cup T^{-1}(\partial M)$.

It was proven in \cite{CZ09, DZ11, DZ13} that all the above collision map $T: M\to M$
preserves a mixing SRB measure $\mu$, and the systems $(M, T, \mu)$ 
satisfy the assumptions \textbf{(H1)-(H5)} in Section~\ref{Sec:Assumptions}. 
Therefore, under conditions in Theorem \ref{main}, 
the ASIP holds for the non-stationary process generated by unbounded observables
over those systems.

\subsection{Practical random process} 
In this subsection, we discuss some practical processes 
over the concrete systems in Section~\ref{sec:concrete}. 

\subsubsection{Fluctuation of Lyapunov exponents}

By Birkhoff's ergodic theorem, Pesin entropy formula 
and the mixing property of the system $(M, T, \mu)$, 
we have 
\beqn
\lim_{n\to\infty} \frac{1}{n} \log \left|D^u_x T^n \right| = \int \log |D^u_x T| d\mu = h_\mu(T),
\eeqn
where  $|D^u_x T^n|$ is the Jacobian of $T^n$ at $x$ along the unstable direction,
and $h_\mu(T)$ is the metric entropy of the SRB measure $\mu$.
We would like to study the fluctuation of the convergence for the ergodic sum given by 
\beqn
\log \left|D^u_x T^n \right| - nh_\mu(T)=\sum_{k=0}^{n-1} \left[\log |D^u_x T| - h_\mu(T)\right] \circ T^k.
\eeqn
Unlike in Anosov systems, 
the log unstable Jacobian function $x\mapsto \log |D^u_x T|$ is unbounded in billiard systems. 
Recall that $M$ is the phase space of a billiard system with coordinates
$x=(r, \varphi)$, then $\log |D^u_x T|\asymp -\log \cos\varphi$
blows up near the singularities $\{\varphi=\pm\frac{\pi}{2}\}$.
Nevertheless, $\log |D^u_x T|$ is 
dynamically H\"older continuous by Assumption (\textbf{H3}), 
and it belongs to $L^p$ for any $p\ge 1$, as 
\beqn
\int \left| \log |D^u_x T| \right|^p d\mu \asymp  \iint \left| \log \cos\varphi\right|^p \cos \varphi dr d\varphi <\infty.
\eeqn

More generally, it follows from Theorem~\ref{main} that  
an ASIP holds for the ergodic sum of a dynamically H\"older observable.
Here we only assume higher order integrability rather than boundedness for the observable.

\begin{theorem}\label{thm: stationary}
Suppose that $f\in \cH\cap L^p$ for some $p>4$, such that 
$\IE f=0$ and the first moment of its auto-correlations is finite, i.e.,
\beq\label{cor 1m}
\sum_{n=0}^\infty n\left|\IE(f \cdot f\circ T^n)\right| < \infty.
\eeq
Then the stationary process $\bX_f:=\{f\circ T^n\}_{n\ge 0}$ 
satisfies an ASIP for any error exponent $\lambda\in \left(\frac14, \frac12 \right)$,
that is, there is a Wiener process $W(\cdot)$ such that 
\beq\label{ASIP12}
\left|\sum_{k=0}^{n-1} f \circ T^k - \sigma_f W(n) \right|=\cO(n^{\lambda}), \  \ \text{a.s.}.
\eeq
where $\sigma_f^2$ is given by the Green-Kubo formula, i.e., 
\beq\label{sigma_f}
\sigma^2_f:=\sum_{n=-\infty}^\infty \IE\left(f\cdot f\circ T^n\right) \in [0, \infty).
\eeq
\end{theorem}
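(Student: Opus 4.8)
The plan is to deduce Theorem~\ref{thm: stationary} from the main result, Theorem~\ref{main}, by taking the constant sequence $f_n\equiv f$. Conditions (1) of Theorem~\ref{main} are then immediate: since $f\in\cH=\cH^+\cap\cH^-$, we may set $\vartheta_{\bbf}:=\vartheta_f$ and $\beta_{\bbf}:=0$, so that $|f_n|^+_{\vartheta_\bbf}+|f_n|^-_{\vartheta_\bbf}$ is constant in $n$, hence $\ll n^0$. Likewise $f_n=f\in L^p$ with $p>4$ and $\IE(f_n)=0$. The whole content is therefore to verify the moment bounds in \eqref{M-Z} with the exponents $\kappa_2=\kappa_p=\tfrac12$; once this is done, Theorem~\ref{main} yields an ASIP for the error exponent $\lambda\in\left(\max\{\tfrac14,\tfrac{1}{8\cdot(1/2)}\},\tfrac12\right)=\left(\tfrac14,\tfrac12\right)$, and the linearly-growing-variance case of \eqref{ASIP0} noted in the introduction turns \eqref{ASIP1} into \eqref{ASIP12}.

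First I would handle the $L^2$ side and the identification of $\sigma_f^2$. Writing $S_n=\sum_{k=0}^{n-1}f\circ T^k$, stationarity gives
\beqn
\IE(S_n^2)=n\,\IE(f^2)+2\sum_{k=1}^{n-1}(n-k)\,\IE(f\cdot f\circ T^k).
\eeqn
By \eqref{cor 1m} the tail $\sum_{k\ge1}k|\IE(f\cdot f\circ T^k)|$ is finite, so $\IE(S_n^2)=n\sigma_f^2+\cO(1)$ with $\sigma_f^2$ as in \eqref{sigma_f}; in particular $\sigma_f^2\in[0,\infty)$, and $\sigma_n^2=\IE(S_n^2)\asymp n$ provided $\sigma_f^2>0$, giving the lower bound $\sigma_n\gg n^{1/2}$, i.e. $\kappa_2=\tfrac12$. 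The degenerate case $\sigma_f^2=0$ must be addressed separately: then $\|S_n\|_{L^2}$ is bounded, $S_n$ is a coboundary in $L^2$ (by a standard Gordin–Liverani-type argument, or directly: $\sigma_f^2=0$ forces $\sum_k\IE(f\cdot f\circ T^k)$ to telescope), and the ASIP \eqref{ASIP12} holds trivially with $W\equiv0$ since $S_n=\cO(1)$, or more carefully $S_n=o(n^\lambda)$ a.s. by Gal–Koksma (Proposition~\ref{Gal-Koksma}) with $\kappa$ arbitrarily small. So assume $\sigma_f^2>0$ henceforth.

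Next I would establish the uniform $L^p$ upper bound $\sup_{m\ge0}\|\sum_{k=m}^{m+n-1}f\circ T^k\|_{L^p}\ll n^{1/2}$, which by stationarity reduces to $\|S_n\|_{L^p}\ll n^{1/2}$, i.e. $\kappa_p=\tfrac12$. This is the technical heart. The idea is to use the exponential $\alpha$-mixing of the family $\fF$ (Proposition~\ref{prop: alpha mixing}) together with the approximation of $f$ by $\fF^{\lceil \ell\rceil}_{-\lceil\ell\rceil}$-measurable conditional expectations as in Lemma~\ref{lem: convergence}, to put $f\circ T^k$ into a martingale-difference framework, and then invoke a Marcinkiewicz–Zygmund / Burkholder inequality for martingales. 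Concretely: one writes $f=h+(g-h\circ T+h)$ where $\IE(\cdot\mid\fF^{-1}_{-\infty})$-type projections produce a reverse martingale decomposition $f\circ T^k=\psi_k+\chi_k\circ T-\chi_k$ with $\{\psi_k\}$ a (reverse) martingale difference sequence and $\|\chi_k\|_{L^p}$ summably controlled using \eqref{cor 1m} and Proposition~\ref{exp decay}; then Burkholder's inequality gives $\|\sum_{k=m}^{m+n-1}\psi_k\|_{L^p}\ll\|(\sum\psi_k^2)^{1/2}\|_{L^p}\ll\sqrt n\,\|f\|_{L^p}$, and the coboundary part contributes $\cO(1)$. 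Alternatively — and this is likely what the authors intend — one bounds $\|S_n\|_{L^p}$ directly by expanding $\IE(S_n^p)$ into a sum over $p$-tuples $0\le k_1\le\cdots\le k_p<n$ and controlling each term by Lemma~\ref{lem: cov inequality} applied to the largest gap, exploiting $\|f\|_{L^{p'}}<\infty$ for $p'$ slightly above $p$ — but this needs a uniform $L^{p+\delta}$ bound, so the martingale route is cleaner when only $f\in L^p$ is assumed.

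\textbf{The main obstacle} is precisely this $L^p$ bound on partial sums under the sole hypotheses $f\in\cH\cap L^p$ and the first-moment condition \eqref{cor 1m}: one cannot simply cite a classical Marcinkiewicz–Zygmund inequality for mixing sequences because $f$ is unbounded and only $L^p$-integrable, while the standard mixing moment inequalities (e.g.\ those of \cite{Yo80}) ask for slightly more integrability than the decorrelation rate would otherwise need. The fix is to exploit the \emph{exponential} rate in Proposition~\ref{prop: alpha mixing}: with exponential $\alpha$-mixing, the covariance inequality Lemma~\ref{lem: cov inequality} can be applied with $s_3$ arbitrarily large at negligible cost, so that only a tiny sliver of extra integrability is consumed — and in fact none, if one routes everything through the martingale decomposition where Burkholder gives the clean $L^p\to L^p$ bound with no loss. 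Once \eqref{M-Z} is verified with $\kappa_2=\kappa_p=\tfrac12$, Theorem~\ref{main} applies verbatim and, since $\sigma_n^2=n\sigma_f^2+\cO(1)$, the conclusion \eqref{ASIP12} follows from \eqref{ASIP1} exactly as in the linear-variance remark of the introduction, with the Wiener process rescaled by $\sigma_f$.
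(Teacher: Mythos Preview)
Your overall plan is right --- take the constant sequence $f_n\equiv f$, verify Condition~(1) of Theorem~\ref{main} trivially, compute $\sigma_n^2=n\sigma_f^2+\cO(1)$ from \eqref{cor 1m} so that $\kappa_2=\tfrac12$, handle the degenerate case $\sigma_f=0$ via a coboundary argument, and then invoke Theorem~\ref{main}. That is exactly what the paper does.

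Where you diverge from the paper is in the $L^p$ upper bound, and here you have manufactured an obstacle that does not exist. You aim for $\kappa_p=\tfrac12$ and propose a Gordin-type martingale decomposition plus Burkholder's inequality to get there. The paper does nothing of the sort: it simply applies Minkowski's inequality,
\[
\left\|\sum_{k=0}^{n-1} f\circ T^k\right\|_{L^p}\le n\|f\|_{L^p}\ll n,
\]
and takes $\kappa_p=1$. This is enough because the admissible range of error exponents in Theorem~\ref{main}, namely $\lambda\in\bigl(\max\{\tfrac14,\tfrac{1}{8\kappa_2}\},\tfrac12\bigr)$, depends \emph{only} on $\kappa_2$ --- the paper stresses this explicitly right after the statement of Theorem~\ref{main}. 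With $\kappa_2=\tfrac12$ one already obtains the full range $\lambda\in(\tfrac14,\tfrac12)$; the value of $\kappa_p$ is irrelevant. So your ``technical heart'' and ``main obstacle'' are phantoms: the martingale/Burkholder route, while it could in principle be made to work, buys nothing and introduces genuine extra labor (constructing the $L^p$ Gordin decomposition under only $f\in L^p$ is not free).

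The final passage from $W(\sigma_n^2)$ to $\sigma_f W(n)$ also deserves a little more than a reference to the introductory remark: the paper carries this out by noting that $W(\sigma_n^2)-\sigma_f W(n)$ is Gaussian with variance $|\sigma_n^2-n\sigma_f^2|=\cO(1)$, bounding high moments, and applying Borel--Cantelli to get an $\cO(n^{1/4})$ almost-sure bound.
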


\begin{proof} 
First, we note that the series in \eqref{sigma_f} converges absolutely 
by Condition \eqref{cor 1m}. 
By direct computation, we have
{\allowdisplaybreaks
\begin{eqnarray*}
\sigma_n^2=\IE\left( \sum_{k=0}^{n-1} f \circ T^k\right)^2 
&= & n \sigma_f^2 -  \sum_{|k|>n} n \ \IE\left(f\cdot f\circ T^k\right)  -2 \sum_{k=1}^{n-1} k\ \IE\left(f\cdot f\circ T^k\right) \\
&=& n  \sigma_f^2 +\cO\left(1 \right).
\end{eqnarray*}
}Therefore, $\sigma_f^2=\lim_{n\to\infty} \sigma_n^2/n \in [0, \infty)$. 
If $\sigma_f=0$, then $\sigma_n^2$ is uniformly bounded.
In such case, it is well known that $f$ is a coboundary, i.e.,
there exists an $L^2$ function $g: M\to \IR$ such that $f=g-g\circ T$ (see e.g. Theorem 18.2.2 in \cite{IL71}), 
and thus \eqref{ASIP12} is automatic for any error exponent $\lambda>0$. 

We now focus on the case when $\sigma_f>0$. 
Condition (1) in Theorem \ref{main} automatically holds since $f\in \cH$.
For Condition (2), we have $\sigma_n\asymp \sqrt n$ since  $0<\sigma_f<\infty$, that is, $\kappa_2=\frac12$.
Also, by stationarity and Minkowski's inequality,
\beqn
\sup_{m\ge 0} \left\|\sum_{k=m}^{m+n-1} f \circ T^k \right\|_{L^p} =
\left\|\sum_{k=0}^{n-1} f \circ T^k \right\|_{L^p} \le n \|f\|_{L^p} \ll n.
\eeqn
In other words, $\kappa_p=1$. By Theorem \ref{main}, we obtain the ASIP for any $\lambda\in (\frac14, \frac12)$, 
that is, there exists a Wiener process $W(\cdot)$ such that 
\beqn
\left|\sum_{k=0}^{n-1} f \circ T^k - W(\sigma_n^2) \right|=\cO(\sigma_n^{2\lambda})=\cO\left(n^\lambda\right), \  \ \text{a.s.}.
\eeqn
Note that $Z_n:=W(\sigma_n^2)-\sigma_f W(n)$ follows the normal distribution 
$N\left(0, \left|\sigma_n^2- n\sigma_f^2\right|\right)$,
and recall that $\left|\sigma_n^2- n\sigma_f^2\right| =\cO(1)$. 
Then by Jensen's inequality,
\beqn
\mu\{|Z_n|\ge n^{\frac14}\}\le n^{-2} \IE|Z_n|^8
\le n^{-2} \left(\IE|Z_n|^2\right)^{4}
\ll n^{-2},
\eeqn
which implies that $\sum_{n=1}^\infty \mu\{|Z_n|\ge n^{\frac14}\}<\infty$. 
By the Borel-Cantelli lemma (Lemma~\ref{lem: B-C}), 
we get
$|Z_n|\ll n^{\frac14}$, a.s.. Therefore, 
\beqn
\left|\sum_{k=0}^{n-1} f \circ T^k - \sigma_f W(n) \right| \le
\left|\sum_{k=0}^{n-1} f \circ T^k - W(\sigma_n^2) \right| +\left| Z_n\right| 
=\cO(n^\lambda), \ a.s..
\eeqn
\end{proof}

\begin{remark}
The stationary ASIP in the special case when $p=\infty$ had been shown by Chernov \cite{C06}
and many other authors. In this case, Condition \eqref{cor 1m} holds due to  
the exponential decay of correlations for bounded dynamically H\"older observables.

Moreover, we can relax Condition \eqref{cor 1m} to sub-linear first moment of auto-correlations, i.e.,
$\sum_{n=0}^\infty n\left|\IE(f \cdot f\circ T^n)\right| \ll n^\eta$ for some $\eta<1$, then 
by a slight modification in the proof, we can show that the ASIP in \eqref{ASIP12}
holds for any error exponent $\lambda\in \left(\max\{\frac14, \frac{\eta}{2}\}, \frac12 \right)$.
\end{remark}

Now we can directly apply Theorem~\ref{thm: stationary} to study the fluctuations of Lyapunov exponents
in generic billiard systems for which Markov sieves exist (See Corollary 1.8 and Theorem 7.2 in \cite{C95} for more details).
For such generic billiards, Condition \eqref{cor 1m} holds for $f=\log |D^u_x T| - h_\mu(T)$ and 
a broader class of observables. 
Therefore, by Theorem~\ref{thm: stationary}, for any $\lambda\in (\frac14, \frac12)$, there is a Wiener process $W(\cdot)$ 
such that  
\beqn
\left|\log \left|D^u_x T^n \right| - nh_\mu(T) - \sigma_f W(n) \right|=\cO(n^{\lambda}), \  \ \text{a.s.}.
\eeqn

\vskip.1in

\subsubsection{Shrinking target problem}

Let $\{A_n\}_{n\ge 0}$ be a sequence of nested Borel subsets of $M$, i.e.,  
$A_n\supset A_{n+1}$ for any $n\ge 0$. 
Given $x\in M$, 
we can study the absolute frequency that the trajectory of $x$ hits the shrinking targets $A_n$.
More precisely, for any $n\ge 1$, we denote
\beq\label{def Nn}
N_n(x)=\#\{k\in [0, n):\ T^k x\in A_k\}=\sum_{k=0}^{n-1} \b1_{A_k}\circ T^k(x).
\eeq
Note that $\IE N_n=\sum_{k=0}^{n-1} \mu(A_k)$ by the invariance of $\mu$ under $T$. 
We say that the sequence $\{A_n\}_{n\ge 0}$ is dynamically Borel-Cantelli if 
$\lim_{n\to\infty} N_n=\infty$, a.s..

Similar to a recent result by Haydn, Nicol, T\"or\"ok and Vaienti in \cite{HNTV17} (see 
Theorem 5.1 therein),
we obtain the following ASIP for the frequency process $N_n$ of the shrinking target problem.

\begin{theorem}\label{thm: ASIP target}
Let $\{A_n\}_{n\ge 0}$ be a sequence of nested Borel subsets such that 
\begin{itemize}
\item[(i)] There is $\beta\in [0, \infty)$ such that 
$\b1_{A_n}\in \cH_{0.5}$ and
$
|\b1_{A_n}|_{0.5}^+ + |\b1_{A_n}|_{0.5}^-  \ll n^{\beta};
$
\item[(ii)] There is $\gamma\in (0, \frac34)$ such that $\mu(A_n)\gg n^{-\gamma}$.
Moreover, $\mu(A_n)=o(\frac{1}{\log n})$.
\end{itemize}
Then the process $N_n$ (as defined in \eqref{def Nn})
satisfies the ASIP for any error exponent 
$\lambda\in \left( \max\{\frac14, \frac{1}{8(1-\gamma)}\}, \frac{1}{2}\right)$, that is,
there exists a Wiener process $W(\cdot)$ such that
\beq\label{ASIP14}
\left|N_n - \IE N_n   -W(\sigma_n^2) \right|=\cO(\sigma_n^{2\lambda}), \  \ \text{a.s.},
\eeq
where $\sigma_n^2=\IE N_n^2 -\left( \IE N_n \right)^2$.
\end{theorem}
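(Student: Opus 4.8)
The plan is to verify that the process $\bX_{\bbf} = \{\b1_{A_n}\circ T^n - \mu(A_n)\}_{n\ge 0}$ satisfies the hypotheses of Theorem~\ref{main}, and then deduce \eqref{ASIP14}. Set $f_n := \b1_{A_n} - \mu(A_n)$, so that $\IE(f_n) = 0$ and $N_n - \IE N_n = \sum_{k=0}^{n-1} f_k\circ T^k$. Condition (1) of Theorem~\ref{main} follows immediately from hypothesis (i) with $\vartheta_{\bbf} = 0.5$ and $\beta_{\bbf} = \beta$, since subtracting the constant $\mu(A_n)$ does not change the dynamically H\"older semi-norms and $|f_n|_{0.5}^\pm = |\b1_{A_n}|_{0.5}^\pm \ll n^\beta$. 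It remains to establish the moment bounds \eqref{M-Z}.

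The upper bound in \eqref{M-Z} is easy: since $0 \le \b1_{A_k} \le 1$, we have $|f_k| \le 1$, so by Minkowski's inequality $\left\|\sum_{k=m}^{m+n-1} f_k\circ T^k\right\|_{L^p} \le \sum_{k=m}^{m+n-1}\|f_k\|_{L^p} \le n$ for every $p$, giving $\kappa_p = 1$ (and of course $f_n \in L^p$ for all $p > 4$). The lower bound $\sigma_n \gg n^{\kappa_2}$ with some $\kappa_2 > \tfrac14$ is the substantive point. The plan is to expand
\beqn
\sigma_n^2 = \sum_{k=0}^{n-1}\mu(A_k)\bigl(1-\mu(A_k)\bigr) + 2\sum_{0\le j<k\le n-1}\Cov\!\left(\b1_{A_j}\circ T^j,\, \b1_{A_k}\circ T^k\right),
\eeqn
bound the covariance terms via exponential decay of correlations, and extract a lower bound from the diagonal sum $\sum_{k=0}^{n-1}\mu(A_k)$. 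For the covariances, write $\Cov(\b1_{A_j}\circ T^j, \b1_{A_k}\circ T^k) = \Cov(\b1_{A_j}, \b1_{A_k}\circ T^{k-j})$ by invariance; since $\b1_{A_j}, \b1_{A_k}$ are indicators of sets that are dynamically H\"older with semi-norms $\ll n^\beta$, Proposition~\ref{exp decay} gives $|\Cov(\b1_{A_j}, \b1_{A_k}\circ T^{k-j})| \ll n^{2\beta}\,\theta^{k-j}$ for some $\theta = \max\{\vartheta_0, (0.5)^{1/4}\} < 1$ (or, since indicators of homogeneous (un)stable-saturated preimages of $A_k$ are genuinely dynamically H\"older, one may want a preliminary smoothing/approximation step as in Lemma~\ref{lem: convergence}). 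Summing, $\sum_{0\le j<k\le n-1}|\Cov| \ll n^{2\beta}\sum_{j}\sum_{\ell\ge 1}\theta^\ell \ll n^{1+2\beta}$ — which is far too crude, so the real work is a more careful estimate. The better route: fix a moderately growing gap $g_k = \lceil C_1\log k\rceil$ and split covariances into \emph{near} pairs ($k - j < g_k$) and \emph{far} pairs. Far pairs contribute $\ll n^{2\beta}\sum_k n\,\theta^{g_k} = \ll n^{2\beta}\sum_k n\cdot k^{-C_1\log(1/\theta)}$, which is $o(n)$ once $C_1$ is large (using $\mu(A_k) = o(1/\log k)$ to control the $n^{2\beta}$ factor — here one uses that $|\b1_{A_n}|_{0.5}^\pm$ can in fact be taken $\ll$ something compatible with $\mu(A_n)\to 0$, or absorbs $n^{2\beta}$ as a subpolynomial correction only affecting constants). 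Near pairs are at most $n\cdot\max_k g_k \ll n\log n$ in number, but each covariance is trivially bounded by $\min\{\mu(A_j),\mu(A_k)\} \le \mu(A_{j})$; using nestedness and $\mu(A_k) = o(1/\log k)$ one gets near-pair contribution $\ll \sum_k g_k\,\mu(A_{k-g_k}) = o\bigl(\sum_k \mu(A_k)\bigr)$ by a summation-by-parts / Abel argument since $g_k\mu(A_k) = o(\mu(A_k)\cdot(\text{slowly growing}))$ and $\log k\cdot\mu(A_k)\to 0$. Hence $\sigma_n^2 = \sum_{k=0}^{n-1}\mu(A_k)\bigl(1 + o(1)\bigr) \gg \sum_{k=0}^{n-1} k^{-\gamma} \gg n^{1-\gamma}$ by hypothesis (ii), so $\sigma_n \gg n^{(1-\gamma)/2}$ and we may take $\kappa_2 = (1-\gamma)/2 > \tfrac14$ precisely because $\gamma < \tfrac34$.

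With $\kappa_2 = \tfrac{1-\gamma}{2}$ and $\kappa_p = 1 \ge \kappa_2$, Theorem~\ref{main} applies and yields, for every $\lambda \in \bigl(\max\{\tfrac14, \tfrac{1}{8\kappa_2}\}, \tfrac12\bigr) = \bigl(\max\{\tfrac14, \tfrac{1}{4(1-\gamma)}\}, \tfrac12\bigr)$, a Wiener process with $\bigl|\sum_{k=0}^{n-1} f_k\circ T^k - W(\sigma_n^2)\bigr| = \cO(\sigma_n^{2\lambda})$ a.s.; since $\sum_{k=0}^{n-1} f_k\circ T^k = N_n - \IE N_n$ and $\sigma_n^2 = \IE N_n^2 - (\IE N_n)^2$ by invariance, this is exactly \eqref{ASIP14}. (The range stated in the theorem uses $\tfrac{1}{8(1-\gamma)}$; note $\max\{\tfrac14, \tfrac{1}{4(1-\gamma)}\} = \tfrac{1}{4(1-\gamma)} \ge \tfrac{1}{8(1-\gamma)}$ for $\gamma \le \tfrac12$ — one should double-check the exponent bookkeeping against $8\kappa_2 = 4(1-\gamma)$, adjusting the claimed interval or noting the stated one is contained in the valid one.)

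\textbf{Main obstacle.} The delicate step is the lower bound $\sigma_n \gg n^{(1-\gamma)/2}$: one must show the off-diagonal covariance sum does not cancel the diagonal term $\sum\mu(A_k) \asymp n^{1-\gamma}$. Exponential decay of correlations handles pairs with a logarithmic time gap, but the near-diagonal block of width $\sim\log n$ is exactly where the condition $\mu(A_n) = o(1/\log n)$ is needed, and turning that hypothesis into an $o\bigl(\sum_k\mu(A_k)\bigr)$ bound on the near-pair sum requires a careful Abel-summation argument exploiting nestedness. A secondary subtlety is that $\b1_{A_k}$ need not lie in $\cH^\pm$ with controlled constants unless hypothesis (i) is invoked verbatim — so one must be careful to apply Proposition~\ref{exp decay} only through the given semi-norm bound $\ll n^\beta$, and check that the resulting $n^{2\beta}$ factors are genuinely dominated (this is why (ii) pairs a polynomial \emph{lower} bound on $\mu(A_n)$ with a $o(1/\log n)$ \emph{upper} bound, and why (i) is stated with a polynomial rate that the decay-of-correlations exponent easily beats).
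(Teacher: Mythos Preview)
Your overall plan matches the paper's: reduce to Theorem~\ref{main} with $f_n=\b1_{A_n}-\mu(A_n)$, take $\kappa_p=1$, and establish $\sigma_n^2\gg n^{1-\gamma}$ by splitting the off-diagonal covariances into near and far pairs across a logarithmic gap. But your near-pair estimate has a real gap. You bound each near covariance by $|\Cov|\le\min\{\mu(A_j),\mu(A_k)\}=\mu(A_k)$ (for $j<k$), giving a near-pair deficit $\ll\sum_k(\log k)\,\mu(A_k)$. This is \emph{not} in general $o\bigl(\sum_k\mu(A_k)\bigr)$: take $\mu(A_k)=(\log k)^{-1-\varepsilon}$, which satisfies (ii) for every $\gamma\in(0,\tfrac34)$, yet $\sum_{k\le n}(\log k)\mu(A_k)\asymp n(\log n)^{-\varepsilon}$ swamps $\sum_{k\le n}\mu(A_k)\asymp n(\log n)^{-1-\varepsilon}$. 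No Abel-summation trick rescues this; the bound is one factor of $\mu(A_k)$ too weak. The paper's key observation is that since only a \emph{lower} bound on $\sigma_n^2$ is needed, one may discard the nonnegative intersection term and use
\[
\mu\bigl(A_k\cap T^{-(\ell-k)}A_\ell\bigr)-\mu(A_k)\mu(A_\ell)\ \ge\ -\mu(A_k)\mu(A_\ell)\ \ge\ -\mu(A_k)^2,
\]
so the near-pair deficit is $\ll\sum_k(\log k)\,\mu(A_k)^2$; now $\mu(A_k)=o(1/\log k)$ gives $(\log k)\mu(A_k)^2=o(\mu(A_k))$ termwise, and the deficit is genuinely $o\bigl(\sum_k\mu(A_k)\bigr)$. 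Your far-pair treatment is also too loose (bounding semi-norms globally by $n^\beta$ leaves an $n^{2\beta}$ factor you cannot absorb), but that part is easily repaired by using $k^\beta,\ell^\beta$ and choosing the gap constant large in terms of $\beta$, as the paper does to get a total far-pair contribution of $O(1)$; the near-pair step is the essential missing idea.

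Your closing remark on bookkeeping is well taken: the argument yields $\sigma_n^2\gg n^{1-\gamma}$, hence $\kappa_2=(1-\gamma)/2$ and $\tfrac{1}{8\kappa_2}=\tfrac{1}{4(1-\gamma)}$. The paper writes ``$\kappa_2=1-\gamma$'' and states the $\lambda$-range with $\tfrac{1}{8(1-\gamma)}$; this appears to be a slip in the paper, and with the correct $\kappa_2$ one in fact needs $\gamma<\tfrac12$ for the hypothesis $\kappa_2>\tfrac14$ of Theorem~\ref{main} to hold.
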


\begin{remark} Here is a particular choice of the sequence $\{A_n\}_{n\ge 0}$ such that 
Condition (i) in Theorem~\ref{thm: ASIP target} holds: let 
$A_n$ be an open subset with boundaries 
in the singular set $S_{-w(n)}\cup S_{w(n)}$,
where $w(n)$ is an sequence of positive integers such that 
$w(n)\ll \log_2 n$. Then each $\b1_{A_n}\in \cH_{0.5}$ and
$
|\b1_{A_n}|_{0.5}^+ + |\b1_{A_n}|_{0.5}^- \le 2^{w(n)} \ll n^{\beta}
$
for some $\beta>0$.
\end{remark}

\begin{proof}[Proof of Theorem~\ref{thm: ASIP target}] 
Without loss of generality, we may assume that $\mu(A_0)\le \frac12$. 
We take $f_n=\b1_{A_n}-\mu(A_n)$, then $N_n - \IE N_n=\sum_{k=0}^{n-1} f_k\circ T^k$. 
It follows from Condition (i) that $\{f_n\}_{n\ge 0}$ satisfies Condition (1) in Theorem \ref{main}.
For Condition (2), the second moment inequality in \eqref{M-Z} is automatic since for any $p\ge 1$,
any $m\ge 0$ and any $n\ge 1$, 
\beqn
\left\|\sum_{k=m}^{m+n-1} f_k\circ T^k \right\|_{L^p} \le \sum_{k=m}^{m+n-1} \left\|f_k \right\|_{L^\infty} \le 2n.
\eeqn
That is, $\kappa_p=1$. 

It remains to show the first moment inequality in \eqref{M-Z}. 
We follow the arguments of Lemma 2.4 in \cite{HNVZ13}.
First, we claim the following long term iterations:
there exists $c>0$ such that for any $k\ge 0$,
\beq\label{long iterations}
\sum_{\ell>k+c\log(k+1)} \left| \IE(f_k\circ T^k \cdot f_\ell \circ T^\ell) \right| \ll (k+1)^{-2}.
\eeq
Indeed, by Proposition~\ref{exp decay}, we take $\theta:=\max\{\vartheta_0, 2^{-1/4}\}<1$
and $c>\frac{3+\beta}{-\log \theta}$.
Then together by Condition (i), for any $0\le k< \ell$ and such that $\ell-k>c\log(k+1)$,  
{\allowdisplaybreaks
\begin{eqnarray*}
\left| \IE(f_k\circ T^k \cdot f_\ell \circ T^\ell) \right|
&\ll & C_0 \left(4+2k^\beta +2\ell^\beta \right) \theta^{\ell-k} \\
&\le & C_0 \left(4+2k^\beta + 2^{1+\beta} \left[k^\beta + (\ell-k)^\beta  \right] \right)\theta^{\ell-k} \\
&\ll & \cO(1) + k^\beta \theta^{\ell-k} + (\ell-k)^\beta \theta^{\ell-k} \\
&\ll & \cO(1) + k^\beta (k+1)^{-3-\beta} +\cO(1)\ll (k+1)^{-3},
\end{eqnarray*}
}which immediately implies \eqref{long iterations}. Now we have 
{\allowdisplaybreaks
\begin{eqnarray*}
\sigma_n^2=\IE\left( \sum_{k=0}^{n-1} f_k\circ T^k \right)^2
&=& \sum_{k=0}^{n-1} \IE(f_k^2) 
 +2\sum_{k=0}^{n-1} \sum\limits_{\substack{k< \ell<n, \\ \ell \le k+c\log(k+1)}} \IE(f_k\circ T^k \cdot f_\ell \circ T^\ell) \\
& & +2 \sum_{k=0}^{n-1} \sum\limits_{\substack{k< \ell<n, \\ \ell>k+ c\log(k+1)}} \IE(f_k\circ T^k \cdot f_\ell \circ T^\ell) \\
&=& \sum_{k=0}^{n-1} \left( \mu(A_k) - \mu(A_k)^2 \right)  \\
& & + 2\sum_{k=0}^{n-1} \sum\limits_{\substack{k< \ell<n, \\ \ell \le k+ c\log(k+1)}} 
\left( \mu(A_k \cap T^{-(\ell-k)}A_\ell) - \mu(A_k) \mu(A_\ell) \right)  \\
& & + 2 \sum_{k=0}^{n-1} \cO\left((k+1)^{-2} \right) \\
&\ge & \sum_{k=0}^{n-1} \left( \mu(A_k) - \mu(A_k)^2 \right) 
- 2\sum_{k=0}^{n-1} \sum\limits_{\substack{k< \ell<n, \\ \ell \le k+c\log(k+1)}} \mu(A_k) \mu(A_\ell) +\cO(1) \\
&\ge & \frac12 \sum_{k=0}^{n-1} \mu(A_k) - 2c \sum_{k=0}^{n-1} \log(k+1) \mu(A_k)^2 +\cO(1) \\
&=& \frac12 \sum_{k=0}^{n-1} \mu(A_k) \left[1-4c \log(k+1) \mu(A_k)\right] +\cO(1),
\end{eqnarray*}
}where the last inequality uses the fact that $\mu(A_\ell)\le \mu(A_k)\le \mu(A_0)\le \frac12$. 
By Condition (ii), we further get
\beqn
\sigma_n^2\gg \sum_{k=0}^{n-1} \mu(A_k) (1-o(1))\gg n^{1-\gamma}. 
\eeqn
In other words, $\kappa_2=1-\gamma$. 
Applying our main theorem - Theorem~\ref{main}, we obtain the ASIP for $N_n$ given by 
\eqref{ASIP14}. 
\end{proof}

\section*{Acknowledgements}
The authors would like to thank Nicolai Haydn and Huyi Hu for helpful discussions and suggestions.

\end{document}